\documentclass[12pt, oneside]{amsart}
\usepackage{amsthm}
\usepackage{fullpage}
\usepackage[margin=1in]{geometry}
\usepackage{amsmath}
\usepackage{mathrsfs}
\usepackage{amssymb}
\usepackage{subfigure}
\usepackage{verbatim}
\usepackage{caption}
\usepackage{color}
\usepackage[dvipsnames]{xcolor}
\usepackage{enumerate}

\usepackage{array}
\usepackage{graphicx}
\setlength{\parindent}{0.2in}
\usepackage{subfigure}

\newcommand{\beq}[1]{\begin{equation}\label{#1}}
\newcommand{\eeq}{\end{equation}}
\newcommand{\blem}[1]{\begin{lemma}\label{#1}}
\newcommand{\elem}{\end{lemma}}
\newcommand{\bth}[1]{\begin{theorem}\label{#1}}
\newcommand{\enth}{\end{theorem}}
\newcommand{\brem}[1]{\begin{remark}\label{#1}}
\newcommand{\erem}{\end{remark}}
\newcommand{\Case}[2]{\noindent {\bf Case #1:} \emph{#2}}

\newcommand{\Cov}[1]{\mathscr{#1}}
\newcommand{\powerset}[1]{\operatorname{Pow}(#1)}

\newtheorem{theorem}{Theorem}[section]
\newtheorem{lemma}[theorem]{Lemma}
\newtheorem{prop}[theorem]{Proposition}
\newtheorem{corollary}[theorem]{Corollary}

\theoremstyle{definition}
\newtheorem{defn}{Definition}

\def\HH{H}  

\def\t{{\bf t}}

\begin{document}

\author{Yifan Jing}
\address{%
Department of Mathematics, University of Illinois at Urbana-Champaign, Urbana, IL 61801, USA.}
\email{yifanjing17@gmail.com.}
\author{Alexandr Kostochka}
\address{%
Department of Mathematics, University of Illinois at Urbana-Champaign, Urbana, IL 61801, USA, and Sobolev Institute of Mathematics, Novosibirsk 630090, Russia.}
\email{kostochk@math.uiuc.edu.}
\author{Fuhong Ma}
\address{%
School of Mathematics, Shandong University, Jinan 250100, China.}
\email{mafuhongsdnu@163.com.}
\author{Pongpat Sittitrai}
\address{%
Department of Mathematics, Faculty of Science, Khon Kaen University, 40002, Thailand.}
\email{pongpat\_s@kkumail.com.}
\author{Jingwei Xu}
\address{%
Department of Mathematics, University of Illinois at Urbana-Champaign, Urbana, IL 61801, USA.}
\email{jx6@illinois.edu.}

\thanks{A.K.~was partially supported by NSF grants DMS1600592, by grants 18-01-00353A and 19-01-00682 of the Russian Foundation for Basic Research and
 by Arnold O. Beckman Campus Research Board Award RB20003 of the University of Illinois at Urbana-Champaign.}
\thanks{F.M.~was supported by NNSF grants 11671232 and Shandong Province Natural Science Foundation (ZR2018MA001) of China. Corresponding author.}
\thanks{J.X.~was partially supported   by Arnold O. Beckman Campus Research Board Award RB20003 of the University of Illinois at Urbana-Champaign.}

\title{Defective DP-colorings of sparse multigraphs}

\date{}

\maketitle
\begin{abstract}
DP-coloring (also known as correspondence coloring) is a generalization of list coloring developed recently by Dvo\v r\' ak and Postle.
 We introduce and study $(i,j)$-defective DP-colorings of multigraphs. We concentrate on
sparse multigraphs and consider $f_{DP}(i,j,n)$ --- the minimum number of edges that may have an $n$-vertex  {\em $(i,j)$-critical multigraph}, that is, a multigraph $G$ that has
no  $(i,j)$-defective DP-coloring but whose every proper subgraph has such a coloring. For every $i$ and $j$, we find linear lower bounds
on  $f_{DP}(i,j,n)$ that are exact for infinitely many $n$.
\\
\\
 {\small{\em Mathematics Subject Classification}: 05C15, 05C35.}\\
 {\small{\em Key words and phrases}:  Defective coloring,  List coloring, DP-coloring, multigraphs.}
\end{abstract}

\section{Introduction}

\subsection{Defective Coloring}

A \emph{proper $k$-coloring} of a graph $G$ is a partition of $V(G)$ into $k$ independent sets $V_1,\dots,V_k$. 
A \emph{$(d_1, \dots, d_k)$-defective coloring} (or simply  {\em $(d_1, \dots, d_k)$-coloring}) of a graph $G$ is a partition of $V(G)$ into sets $V_1,V_2,\dots,V_k$
such that for every $i\in[k]$, every vertex in $V_i$ has at most $d_i$ neighbors in $V_i$. In particular, a proper $k$-coloring
is a $(0,0,\ldots,0)$-defective coloring. A number of significant results 
 on defective colorings of graphs were obtained in~\cite{Ar1,CCW1,DKMR,EKKOS,EMOP,HW18,KYu,LL66,OOW,VW}.
 
 While it is easy to check whether a graph is $(0,0)$-colorable (i.e., bipartite), for every $(i,j)\neq (0,0)$, it is an NP-complete problem 
 to decide whether
 a graph $G$ has an $(i,j)$-coloring.  In particular, Esperet,
Montassier, Ochem, and Pinlou \cite{EMOP} proved that the problem of verifying whether a given planar graph of girth $9$ has a $(0,1)$-coloring is NP-complete. In view of this, there was a series of papers estimating how sparse can be graphs not admitting $(i,j)$-coloring for given $i$ and $j$, see e.g.
\cite{BIMOR10,BIMR11,BIMR12,BK11,BK14,BKY13,KKZ14,KKZ15}. One of often used measures of sparsity is the {\em maximum average degree},
$mad(G)=\max_{G'\subseteq G}\frac{2|E(G')|}{|V(G')|}$. In this paper we restrict ourselves to coloring with 2 colors.
One of the ways to handle the problem is to study $(i,j)$-{\em critical} graphs, that is,
the graphs that do not have $(i,j)$-coloring but every proper subgraph of which has such a coloring. Let 
$f(i,j,n)$  denote the minimum number of edges in an $(i,j)$-critical $n$-vertex graph. For example, since every acyclic graph is $(0,0)$-colorable, for odd $n$ we have $f(0,0,n)=n$. In the above papers, a number of interesting bounds on $f(i,j,n)$ were proved. In particular, for $j\geq 2i+2$ and also for $(i,j)\in \{(0,1),(1,1)\}$ lower bounds were proved that are exact for infinitely many $n$. 

\subsection{Defective List Coloring}

A \emph{list-assignment} of a graph $G$ is a function $L: V(G)\rightarrow \mathcal{P}(\mathbb{N})$ that assigns to each $v\in V(G)$ a list $L(v)$ of `colors'. $L$ is an \emph{$\ell$-list assignment} if the list of every vertex is of size $\ell$. An \emph{$L$-coloring} of $G$ is a function $\phi : V(G) \to \bigcup_{v\in V(G)} L(v)$ such that $\phi(v)\in L(v)$ for every $v\in V(G)$ and $\phi(u)\neq \phi(v)$ whenever $uv\in E(G)$. A graph $G$ is \emph{$k$-choosable} if $G$ has an $L$-coloring for every $k$-list assignment $L$. The following  notion
 was introduced in~\cite{EH1,S1999} and studied in~\cite{S2000,Woodall1,HS06,HW18}: A {\em $d$-defective list $L$-coloring} 
of $G$ is a function $\phi : V(G) \to \bigcup_{v\in V(G)} L(v)$ such that $\phi(v)\in L(v)$ for every $v\in V(G)$ and every vertex has at most
$d$ neighbors of the same color. If $G$ has a $d$-defective list $L$-coloring from every $k$-list assignment $L$, then it is called
{\em $d$-defective $k$-choosable}. 
As in the case of ordinary coloring, a direction of study is showing that ``sparse" graphs are $d$-defective $k$-choosable.
As mentioned before, in this paper we consider only $k=2$. The best known bounds on maximum average degree that guarantee that a
graph is $d$-defective $2$-choosable are due to Havet and Sereni~\cite{HS06} (a new proof of the lower bound is due to Hendrey and Wood~\cite{HW18}):

\bigskip
\noindent{\bf Theorem A} (\cite{HS06}){\bf.} {\em For every $d\geq 0$, if $mad(G)<\frac{4d+4}{d+2}$, then $G$ is $d$-defective $2$-choosable.
On the other hand, for every $\epsilon>0$, there is a graph $G_\epsilon$ with $mad(G_\epsilon)<4+\epsilon-\frac{ 2d+4}{d^2+2d+2}$ that is
not $(d,d)$-colorable.}

\bigskip

\subsection{Defective DP-Coloring}

In order to solve some problems on list coloring, Dvo\v r\'ak and Postle \cite{DP18} introduced and studied the more general notion of DP-coloring. This notion was extended to multigraphs by Bernshteyn, Kostochka and Pron~\cite{BKP}.

\begin{defn}\label{defn:cover}
		Let $G$ be a multigraph. A \emph{cover} of $G$ is a pair $\Cov{H} = (L, \HH)$, consisting of a graph $\HH$ (called the \emph{cover graph} of $G$) and a function $L \colon V(G) \to \powerset{V(\HH)}$, satisfying the following requirements:
		\begin{enumerate}
			\item the family of sets $\{L(u) \,:\,u \in V(G)\}$ forms a partition of $V(\HH)$;
			\item for every $u \in V(G)$, the graph $\HH[L(u)]$ is complete;
			\item if $E(\HH[L(u), L(v)]) \neq \varnothing$, then either $u = v$ or $uv \in E(G)$;
			\item \label{item:matching} if the multiplicity of an edge $uv \in E(G)$ is $k$, then $\HH[L(u), L(v)]$ is the 
union  of at most $k$  matchings connecting $L(u)$ with $L(v)$. (For simplicity, we only consider, throughout our paper, perfect matching whenever there is an edge between $u$ and $v$.)
		\end{enumerate}
		A cover $\Cov{H} = (L, \HH)$ of $G$ is \emph{$k$-fold} if $|L(u)| = k$ for every $u \in V(G)$.
	\end{defn}

In this paper, we consider only $2$-fold covers and by {\bf graphs} below we always mean {\em multigraphs with no  loops}.

 For a graph $G$ with a cover $\Cov{H} = (L,\HH)$, the set $V(\HH)$ is partitioned into two parts $P$ and $R$ such that for every $v\in V(G)$, $|L(v)\cap P| = |L(v)\cap R| = 1$. The vertices in $P$ are called \emph{poor}, those in $R$ are called \emph{rich}. For every vertex $v\in V(G)$, denote the poor vertex in $L(v)$ by $p(v)$, the rich one by $r(v)$.

   \begin{defn}
		Let $G$ be a graph and $\Cov{H} = (L, \HH)$ be a cover of $G$. An \emph{$\Cov{H}$-map} of $G$   is
	an injection $\phi: V(G)\rightarrow V(\HH)$, such that $\phi(v)\in L(v)$ for every $v\in V(G)$. The subgraph of $\HH$ induced by $\phi(V(G))$ is called the \emph{$\phi$-induced  graph}, denoted by $\HH_{\phi}$. 
	\end{defn}
	

	\begin{defn}[An $(i,j)$-coloring]\label{def3}
		Let $0\leq i\leq j$. Let $G$ be a graph and $\Cov{H}=(L,\HH)$ be its cover. An $\Cov{H}$-map $\phi$
		of $G$   is an \emph{$(i, j)$-defective-DP-coloring of $\HH$}
		if  the degree of every poor vertex in $\HH_\phi$ is at most $i$ and  the degree of every rich vertex in $\HH_\phi$ is at most $j$.
	    We say that $G$ is \emph{$(i,j)$-defective-DP-colorable} if for every $2$-fold cover $\Cov{H} = (L,\HH)$ of $G$, $\HH$ admits an $(i, j)$-defective-DP-coloring.
		\end{defn}
		
For brevity, in the rest of the paper we call an $(i, j)$-defective-DP-coloring simply {\em $(i,j)$-coloring}, and instead of 
``$(i, j)$-defective-DP-colorable" say  ``$(i,j)$-colorable".

\begin{defn}[$(i,j)$-critical graphs]\label{ijcr}
Given $0\leq i\leq j$,  a multigraph $G$ is   \emph{$(i, j)$-critical}, if $G$ is not $(i, j)$-colorable, 
but every proper subgraph of $G$ is. Let $f_{DP}(i,j,n)$ be the minimum number of edges in an $n$-vertex  $(i, j)$-critical multigraph.
\end{defn}

The goal of our paper is to find linear lower bounds for $f_{DP}(i,j,n)$ that are exact for all $i\leq j$ for infinitely many $n$. Since
every not $(i,j)$-colorable graph contains an $(i, j)$-critical subgraph, this will yield best possible bounds on sparseness of graphs 
that provides the existence of $(i,j)$-colorings.

\section{Results}

The goal of this paper is to prove the following extremal result.

\begin{theorem}\label{main}
\begin{enumerate}
        \item If $i = 0$ and $j\geq 1$, then $f_{DP}(0,j,n) \geq n+j$. This is sharp for every $j\geq 1$       and every $n\geq 2j+2$.
        \item If $i\geq1$ and $j\geq 2i+1$, then $f_{DP}(i,j,n)\geq \frac{(2i+1)n-(2i-j)}{i+1}$. This is sharp for each such pair $(i,j)$ for infinitely many $n$.
        \item If $i\geq1$ and $i+2\leq j\leq 2i$, then $f_{DP}(i,j,n)\geq \frac{2jn+2}{j+1}$. This is sharp for each such pair $(i,j)$ for infinitely many $n$.
        \item If $i\geq1$, then $f_{DP}(i,i+1,n)\geq \frac{(2i^2+4i+1)n+1}{i^2+3i+1}$. This is sharp for each  $i\geq 1$ for infinitely many $n$.
        \item If $i\geq1$,  then $f_{DP}(i,i,n)\geq \frac{(2i+2)n}{i+2}$. This is sharp for each $i\geq 1$ for infinitely many $n$.
    \end{enumerate}
\end{theorem}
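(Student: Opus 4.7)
The plan is to prove each of the five lower bounds by the same two-step template---structural reducibility lemmas followed by a case-specific discharging argument---and to match each bound by an explicit construction. Fix $n$, and suppose for contradiction that $G$ is an $(i,j)$-critical multigraph on $n$ vertices with strictly fewer edges than the stated bound.

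\emph{Reducibility.} For any $v\in V(G)$, criticality yields an $(i,j)$-coloring $\phi$ of $G-v$ for every $2$-fold cover $\Cov{H}=(L,\HH)$, which I try to extend to $v$. Writing $L(v)=\{p(v),r(v)\}$, the extension fails only if \emph{both} candidate images are overloaded: more than $i$ of $v$'s already colored neighbours are matched to $p(v)$, and simultaneously more than $j$ are matched to $r(v)$. Since each edge $uv$ of multiplicity $k$ contributes $k$ matchings between $L(u)$ and $L(v)$, counting matched endpoints yields a basic reducibility: if the multidegree of $v$ is small, $\phi$ always extends. Iterated ``two-step'' arguments---recolouring a neighbour before colouring $v$, or simultaneously handling two adjacent low-degree vertices---give stronger local reductions ruling out, for example, adjacent pairs of small-degree vertices and vertices joined by many parallel edges to a single neighbour.

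\emph{Discharging.} Give each $v\in V(G)$ the initial charge $d(v)$, so the total charge equals $2|E(G)|$. Design local discharging rules (typically, low-multidegree vertices pull charge from higher-degree neighbours along edges weighted by multiplicity) tailored to each of the five regimes $(i,j)$. The reducibility lemmas are calibrated precisely so that, after discharging, every vertex ends with final charge at least the target average $2m^\star/n$, where $m^\star$ is the clause-specific lower bound from Theorem~\ref{main}. Summing then gives $2|E(G)|\ge 2m^\star$, contradicting the assumption on $|E(G)|$.

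\emph{Constructions and main obstacle.} For sharpness, I would, for each $(i,j)$ and each sufficiently large $n$ in the right residue class, build an extremal multigraph from a small core gadget of the required edge-to-vertex ratio, together with tree-like attachments that pad $n$ without disturbing the additive constants. Each gadget is shown to be critical by exhibiting a cover admitting no $(i,j)$-coloring and verifying colorability of every proper subgraph. The hard part is cases (4) and (5), where the defect budget $j-i\in\{0,1\}$ is minimal: here single-vertex reducibility is far too weak, one must forbid configurations spanning several vertices, and the discharging must become fractional and sensitive to both edge multiplicity and endpoint type. The unusual coefficients $\frac{2i^2+4i+1}{i^2+3i+1}$ and $\frac{2i+2}{i+2}$ appearing in those cases are the direct outcome of balancing the matching structure of $2$-fold covers against these tight defect budgets, and the corresponding gadgets require noticeably more care to describe and verify than those for cases (1)--(3).
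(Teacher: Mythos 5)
Your proposal correctly identifies the general family of techniques (reducible configurations plus discharging plus explicit constructions), but the specific framework you sketch does not actually support the proof, and the missing ingredient is substantial.

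The central difficulty is that deleting a single vertex $v$ and trying to extend a coloring of $G-v$ is far too weak, and your ``iterated two-step'' patch does not supply what is needed. What the paper actually does is prove a strictly stronger statement about \emph{weighted pairs} $(G,\t)$, where $\t$ is a toughness function that tightens the defect budget at individual vertices, and the quantity controlled is not $|E(G)|$ but a \emph{potential} $\rho_{G,\t}(S)=\sum_{v\in S}\rho(v)-b_{i,j}|E(G[S])|$ over all $S\subseteq V(G)$. The reason this generalization is essential: the reducibility lemmas (e.g.\ Lemma~\ref{lem:Jingweigeneral}, Lemma~\ref{cor:uniqueii+1}) do not merely delete a vertex, they \emph{contract an entire low-potential subset $S$ to a single super-vertex $v^*$ with toughness $j$ (or $(i{+}1,i{+}1)$)}, and build a new cover of the contracted graph from a precoloring of $\HH[S]$. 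This is what lets minimality be applied, and it is only expressible once toughness and potential are in the statement being proved by induction. Your degree-based charge $ch(v)=d(v)$ (so total $2|E|$) is not what the paper discharges: the paper's charges are $ch(v)=\rho_{G,\t}(v)$ and $ch(e)=-b_{i,j}$ (and in the $(i,i{+}1)$ case, $-(i^2+3i+1)$ per edge), so the discharging target is $\rho(G,\t)\le w_{j+1}$, not an average-degree bound directly. The precise constants in Parts~3 and~4 (the coefficients $2j/(j+1)$ and $(2i^2+4i+1)/(i^2+3i+1)$) fall out of how $b_{i,j}$ interacts with low-potential sets, not from a calibration of vertex-degree thresholds.

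Two further discrepancies with what the paper does. First, Part~5 ($(i,i)$) is not handled by discharging at all: the paper adapts the Hendrey--Wood partition argument, building a maximal sequence $v_1,\dots,v_p$ with $(i+1)d_{A_k}(v_k)+d_{B_k}(v_k)\ge 2i+2$ and double-counting edge contributions to conclude $(i+2)|E(G)|\ge(2i+2)|V(G)|$. Second, the extremal constructions are not ``tree-like attachments to a core gadget'': they are built from \emph{flags} ($2$-cycles attached at a base vertex) and, for $(i,i+1)$, \emph{weak flags}, hung on a path or cycle, with a hand-built cover in which matching parities along the spine force every interior vertex to be rich and then overload one endpoint; verifying criticality of every proper subgraph uses the already-proved potential bound (a genuine circularity-free bootstrap) rather than direct recoloring. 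In short, your plan is a reasonable outline of the genre, but it omits the toughness/potential scaffolding that makes the induction close and produces the stated constants, and without it the discharging you describe cannot be carried out.
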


Note that depending on the relations between $i$ and $j$, we have five different (exact) bounds.
Since every non-$(i,j)$-colorable graph contains an $(i, j)$-critical subgraph, Theorem~\ref{main} yields the following.

\begin{corollary}\label{main_}
Let $G$ be a multigraph.
    \begin{enumerate}
        \item If $j\geq 1$ and for every subgraph $H$ of $G$, $|E(H)|\leq |V(H)|+j-1$, then $G$ is $(0,j)$-colorable. This is sharp.
        
        \item If $i\geq1$, $j\geq 2i+1$ and for every subgraph $H$ of $G$,  $|E(H)|\leq \frac{(2i+1)|V(H)|-(2i-j+2)}{i+1}$, then $G$ is $(i,j)$-colorable.
       This is sharp.
        \item If $i\geq1$, $i+2\leq j\leq 2i$ and for every subgraph $H$ of $G$, $|E(H)|\leq \frac{2j|V(H)|+1}{j+1}$, then $G$ is $(i,j)$-colorable.
        This is sharp.
        \item If $i\geq1$ and for every subgraph $H$ of $G$,  $|E(H)|\leq \frac{(2i^2+4i+1)}{i^2+3i+1}|V(H)|$, then $G$ is $(i,i+1)$-colorable.
        This is sharp. 
        \item If $i\geq1$ and for every subgraph $H$ of $G$, $|E(H)|\leq \frac{(2i+2)|V(G)|-1}{i+2}$, then $G$ is $(i,i)$-colorable. This is sharp.
    \end{enumerate}
\end{corollary}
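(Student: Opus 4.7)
The plan is to establish each of the five bounds through a lower-bound argument combining criticality with a discharging scheme, together with matching explicit constructions for the sharpness claims.

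First I would develop the basic machinery for analyzing $(i,j)$-critical multigraphs. Fix an $(i,j)$-critical $G$ with a cover $\Cov{H}=(L,H)$ witnessing non-$(i,j)$-colorability. By criticality, for every proper subgraph $G'$ of $G$, the restriction of $\Cov{H}$ to $G'$ admits an $(i,j)$-coloring. The workhorse lemma lists \emph{reducible configurations} that $G$ cannot contain. The standard template is: delete a vertex $v$ (or an edge at $v$), apply criticality to get an $\Cov{H}$-map $\phi$ on the smaller graph, and try to extend $\phi$ to $v$ by selecting $p(v)$ or $r(v)$. Each candidate is blocked only when too many already-colored neighbors of $v$ are joined to it in $H$ with the wrong defect; counting blocking cover-edges in terms of edge multiplicities at $v$ gives lower bounds on the weighted degree of every vertex and forbids certain multi-edge patterns in the neighborhood.

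With a list of forbidden configurations, I would run a discharging argument tailored to each of the five ranges. Assign each vertex $v$ an initial charge equal to $\deg_G(v)$, so the sum of charges is $2|E(G)|$. Design redistribution rules so that every vertex ends with charge at least the target average $2c$, where $c$ is read off the right-hand side of the corresponding bound in Theorem~\ref{main}. The five cases split naturally because the reducible configurations depend on how $j$ compares to $i$: in Part (1) the poor constraint $d_1=0$ is so strong that a short parity/acyclicity argument suffices; in Part (2), with $j\geq 2i+1$, the obstruction is dominated by vertices of small total multiplicity-weighted degree; in Part (3) parallel edges begin to interact with both defects simultaneously; and in Parts (4)--(5) the poor and rich sides are almost symmetric, which calls for a joint analysis.

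For the sharpness statements in each part, I would exhibit explicit $(i,j)$-critical multigraphs with exactly the target number of edges. The elementary building blocks are two vertices joined by $j+1$ parallel edges (handling the smallest critical examples) and short cycles with a controlled number of doubled edges. One obtains arbitrarily large $n$ by gluing such blocks at single vertices and inserting a small number of connector edges to match the prescribed edge count. Non-$(i,j)$-colorability is verified by specifying a cover whose matchings across the connectors force every $\Cov{H}$-map to violate the defect condition somewhere; criticality follows by showing that removing any edge allows the blocks to be colored independently and the global coloring to be assembled.

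The step I expect to be the main obstacle is the discharging for Parts (4) and (5), where $j\in\{i,i+1\}$. The bound $\frac{(2i^2+4i+1)n+1}{i^2+3i+1}$ in Part (4) already signals that the discharging rules will be fractional and delicately balanced, since the standard asymmetry between the poor side (defect $i$) and the rich side (defect $j$) that simplifies Parts (2) and (3) essentially evaporates here. I anticipate that ruling out critical graphs that violate the bound will require a richer list of reducible substructures — involving short paths of vertices whose multiplicities are locally extremal — together with charge transfers along these substructures rather than only between adjacent vertices. By contrast, Parts (1)--(3) should follow the template above with comparatively routine case analysis.
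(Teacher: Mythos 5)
The paper's own proof of this corollary is a single sentence: a multigraph $G$ that is not $(i,j)$-colorable contains an $(i,j)$-critical subgraph $H$, and Theorem~\ref{main} then forces $|E(H)|$ to exceed the hypothesized bound on $H$ --- so the corollary is immediate once Theorem~\ref{main} is in hand. Your proposal never makes this reduction; instead you set out to re-derive the underlying lower bounds and constructions from scratch, which amounts to re-proving Theorem~\ref{main}. Even as a blind attempt, the missing link --- ``every non-$(i,j)$-colorable graph has an $(i,j)$-critical subgraph, so the bound on critical graphs yields the colorability claim'' --- should be stated explicitly; without it, establishing that $(i,j)$-critical graphs have many edges does not by itself show that $G$ is $(i,j)$-colorable.

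If one does want to re-derive the lower bounds, your plan diverges from the paper's method in a few substantive places. The paper does not run a plain vertex-degree discharging in all five cases: it introduces a \emph{potential function} $\rho_{G,\t}(S)=\sum_{v\in S}\rho(v)-b_{i,j}|E(G[S])|$ tied to a \emph{toughness function} $\t$, proves a stronger ``potential form'' theorem (Theorem~\ref{Potential form}, Theorem~\ref{prop:ii+1}) for weighted pairs $(G,\t)$, and only then specializes $\t\equiv 0$. This extra flexibility is crucial: the reducibility arguments contract low-potential sets into a single high-toughness vertex and appeal to minimality in the class of weighted pairs, not merely in the class of critical graphs. For $j\geq 2i+1$ and $i+2\leq j\leq 2i$ the potential argument collapses to a short structural analysis (the critical graph must be a star around the unique $j$-tough vertex), not a discharging at all; discharging enters only for $(0,j)$, $(i,i+1)$, and $(i,i)$. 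For $(i,i+1)$ the toughness is further refined to pairs $(\t_p,\t_r)$ tracking the two defect bounds separately, and the potential of a vertex is the asymmetric expression $2i^2+4i+1-(i+1)\t_p-i\t_r$ (or its mirror). A charge of $\deg_G(v)$ with uniform rules, as you propose, will not reproduce these thresholds. Finally, your sharpness constructions (two vertices with $j+1$ parallel edges, short cycles with doubled edges glued at single vertices) are not the ones that actually attain the bounds: the paper's extremal examples are paths or cycles decorated with \emph{flags} (digon pendants) and \emph{weak flags}, with the number of flags per vertex tuned to $i$, $i+1$, $j-1$, or $j$, and their edge counts land exactly on the stated formulas. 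It is unclear that glued digon blocks can hit the edge count $\frac{(2i^2+4i+1)n+1}{i^2+3i+1}$ for $(i,i+1)$, for instance, and you would still need to verify criticality, which is nontrivial and occupies the bulk of Section~\ref{sec:construction}.

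You do correctly anticipate that the cases $j\in\{i,i+1\}$ are the hardest and call for a joint, asymmetric analysis; that intuition matches the paper, which gives $(i,i+1)$ its own refined model in Section~\ref{sec:iiplus1}. But as written the proposal has a genuine gap (no reduction from colorability to a critical subgraph) and, where it does outline machinery, that machinery differs materially from what the paper uses and would need considerable repair to work.
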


Since a version of our construction in Section \ref{sec:construction} for $(0,j)$-colorings is a simple graph,  Part 1 of Corollary \ref{main_} implies the following result.

\begin{corollary}\label{simple:0j}
Let $G$ be a simple graph and $j\geq 1$ be integers. If for every subgraph $H$ of $G$, $|E(H)|\leq |V(H)|+j-1$, then $G$ is $(0,j)$-colorable. This is sharp 
for all $j\geq 1$ and each $n\geq 3j+3$.
\end{corollary}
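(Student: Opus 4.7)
The first part of the corollary is immediate from Part~1 of Corollary~\ref{main_}: a simple graph is a loopless multigraph in which every edge has multiplicity one, so the edge-density hypothesis stated here is exactly the hypothesis of Corollary~\ref{main_}(1), and therefore $G$ is $(0,j)$-colorable.

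For the sharpness claim, the plan is to exhibit, for each $j\ge 1$ and each $n\ge 3j+3$, a simple $n$-vertex $(0,j)$-critical graph with exactly $n+j$ edges, matching the lower bound of Theorem~\ref{main}(1). I would do this by verifying that the construction used in Section~\ref{sec:construction} to prove sharpness of $f_{DP}(0,j,n)$ among multigraphs admits, in the case $i=0$, a simple-graph realization provided $n$ is large enough. Specifically, the only source of multi-edges in that construction is a small ``core'' gadget forcing the DP-obstruction; when $i=0$, each such parallel edge may be replaced by a path of length two through a fresh subdivision vertex. A careful count of how many extra vertices these subdivisions consume is what yields the threshold $n\ge 3j+3$.

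The verification then requires two checks. First, non-colorability: one must redesign the 2-fold cover of Section~\ref{sec:construction} along each subdivided path so that, regardless of which vertex of $L(w)$ is chosen for a new subdivision vertex $w$, the two endpoints of the original parallel edge are still forced into the same constraint as in the multigraph example. Second, criticality: removing any edge of the new construction should admit an explicit $(0,j)$-DP-coloring; by the (near) symmetry of the construction, only a few edge-orbits need to be checked explicitly, and each handled case follows from the corresponding case in the multigraph argument.

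The main obstacle is the non-colorability step, since replacing a double edge by a subdivided path is genuinely a weakening of the DP-constraint: the two parallel matchings get split across two independent edges, and a clever $\Cov{H}$-map could in principle use the intermediate vertex to escape the obstruction. The delicate point is to choose the matchings on the two new edges so that the intermediate vertex is effectively forced to ``pass through'' the obstruction, and to confirm that the resulting cover still admits no $(0,j)$-coloring; this is precisely where the restriction $n\ge 3j+3$ comes in, as the requisite fresh vertices must be available.
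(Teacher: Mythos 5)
The first part of your argument is correct and matches the paper's. However, your plan for sharpness rests on a misreading of the construction in Section~\ref{sec:construction}. The $(0,j)$-critical graphs $G_m$ built there are already simple: $G_m$ is obtained from a cycle $v_0v_1\cdots v_mv_0$ by attaching $j$ triangles $x_hy_hu_h$ via single edges $u_hv_0$. There are no digons (those appear only in the flag-based constructions for $i\geq 1$), so there are no parallel edges to subdivide, and no cover graph to redesign. The entire second and third paragraphs of your proposal are solving a problem that does not arise.

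Consequently your explanation of the threshold $n\geq 3j+3$ is also wrong. It does not come from counting subdivision vertices; it comes simply from requiring the base cycle $v_0\cdots v_m v_0$ to be a genuine cycle rather than a digon or loop, i.e.\ $m\geq 2$, which together with the $3j$ vertices of the attached triangles gives $|V(G_m)|=3j+m+1\geq 3j+3$. The paper's proof of Corollary~\ref{simple:0j} is therefore a one-line observation (the construction is already simple, plus Corollary~\ref{main_}(1)), whereas your route, even if it could be carried out, would require re-proving non-colorability and criticality for a modified graph and a modified cover — exactly the ``delicate point'' you flag at the end — none of which is needed.
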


In the next section we prove the lower bound in Part 5 of Theorem~\ref{main}. For other lower bounds we will use a more general framework.
It will be introduced in Section~\ref{Se4}, 
and in the subsequent five sections we prove the more general versions of the four other lower bounds. In the last section, we present constructions showing that our bounds are sharp for each $i\leq j$ for infinitely many $n$.

\section{Proof of the lower bound in Theorem \ref{main} for $(i,i)$-colorings}\label{sec:dd}

In this section, we prove 
 the lower bound in Part 5 of Theorem~\ref{main}. 
 The proof adjusts to DP-coloring the idea of Hendrey and Wood in \cite[Theorem 7]{HW18} for list coloring.

\begin{prop}\label{prop:dd}
Let $i\geq 1$ be an integer, and $G$ be an $(i,i)$-critical graph. Then $|E(G)|\geq \frac{2i+2}{i+2}|V(G)|$.
\end{prop}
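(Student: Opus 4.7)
The plan is to adapt to DP-coloring the discharging argument of Hendrey and Wood~\cite{HW18} for defective list coloring. First, the desired inequality $|E(G)|\geq \tfrac{2(i+1)}{i+2}|V(G)|$ is equivalent, after multiplying by $i+2$ and using $2|E(G)|=\sum_v d(v)$, to
\[
\sum_{v\in V(G)}\bigl((i+2)d(v)-4(i+1)\bigr)\geq 0.
\]
I will assign each vertex the initial charge $\mu(v):=(i+2)d(v)-4(i+1)$ and show that its total is nonnegative.

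Fix a cover $\HH^{*}=(L,H^{*})$ of $G$ for which $H^{*}$ has no $(i,i)$-coloring. The main structural ingredient is the reducibility claim that $d(v)\geq i+1$ for every $v\in V(G)$. To prove it by contradiction, suppose $d(v)\leq i$. By criticality of $G$, the proper subgraph $G-v$ admits an $(i,i)$-coloring $\phi$ with respect to the induced sub-cover, and I aim to extend $\phi$ to an $(i,i)$-coloring of $H^{*}$, contradicting the choice of $\HH^{*}$. In the extension we pick $\phi(v)\in\{p(v),r(v)\}$; since $d(v)\leq i$, the degree of $\phi(v)$ in $H^{*}_{\phi}$ is automatically at most $i$, so the only possible obstruction comes from a neighbor $u\in N_G(v)$ with $\deg_{H^{*}_{\phi}}(\phi(u))=i$ becoming oversaturated. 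For each perfect matching between $L(u)$ and $L(v)$, the matching partner of $\phi(u)$ is exactly one of $p(v),r(v)$, so $u$ blocks exactly that one choice. The blocking constraints therefore split into two sides, and either one side is empty (use the other choice) or a blocking neighbor can be broken by a local recoloring move.

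Once $\delta(G)\geq i+1$ is established, the discharging is immediate for $i\geq 2$: a direct computation gives $\mu(v)=(i+1)(i-2)\geq 0$ at $d(v)=i+1$ and $\mu(v)\geq i^{2}\geq 0$ at $d(v)\geq i+2$, so $\sum_v\mu(v)\geq 0$. For $i=1$, the threshold $\tfrac{4(i+1)}{i+2}=\tfrac{8}{3}$ lies strictly between the minimum allowed degree $2$ and the next degree $3$, so additional structure is needed. A supplementary local claim---for instance, that no two adjacent vertices both have degree $2$, or that each degree-$2$ vertex has a neighbor of degree $\geq 3$---combined with a discharging rule sending charge from each vertex of degree $\geq 3$ to its degree-$2$ neighbors, should suffice; the local claim itself should follow from variants of the same extension-and-recoloring argument applied to small configurations.

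The principal obstacle is the reducibility step. In list coloring, $v$'s color may be chosen from a size-$2$ menu with fairly flexible propagation to neighbors; in DP-coloring only the two vertices $p(v),r(v)$ are available, and the rigid matching structure of $\HH^{*}[L(u),L(v)]$ dictates precisely how each choice propagates. Carefully analyzing the blocking patterns at saturated neighbors and, when both sides have blocking neighbors, invoking local recoloring to unlock an extension, is the step that genuinely distinguishes the DP setting from the list setting, and this is where the argument of \cite[Theorem 7]{HW18} most heavily requires adaptation.
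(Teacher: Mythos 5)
Your reducibility claim that every vertex of an $(i,i)$-critical multigraph has degree at least $i+1$ is false, and this is a fatal gap. The paper's own extremal construction in Section 9.5 is an $(i,i)$-critical multigraph $G_m$ (a $2m$-cycle with $i$ flags attached at every even-indexed vertex) with $|E(G_m)| = \tfrac{2(i+1)}{i+2}|V(G_m)|$; the flag vertices have degree exactly $2$, which is strictly less than $i+1$ for all $i\geq 2$. So the extension-and-recoloring argument you propose cannot succeed in general: when you delete a degree-$2$ flag vertex $u$ and color $G-u$, both choices $p(u),r(u)$ create an edge to $\phi(v)$ (one via each of the two matchings in the digon), and there is no room to ``unlock'' the base vertex without a global recoloring. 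Independently, even granting $\delta(G)\geq i+1$, your own analysis admits that $i=1$ is unresolved, so the argument is incomplete there as well.

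The paper avoids any minimum-degree lemma. Following Hendrey--Wood, it first proves a cover-level Gallai-type lemma (Lemma~\ref{L11inHendrey}): if $d(u)+1\leq |L(u)|(i+1)$ for all $u$, the cover is $(i,i)$-colorable (pick the $\Cov{H}$-map minimizing $|E(\HH_\phi)|$). From criticality it then deduces a \emph{partition} lemma (Lemma~\ref{L13inHendrey}): for every nontrivial bipartition $V(G)=A\sqcup B$, some $v\in B$ has $(i+1)d_A(v)+d_B(v)\geq 2i+2$. Iterating this produces a vertex ordering $v_1,\dots,v_n$ with $(i+1)d_{A_k}(v_k)+d_{B_k}(v_k)\geq 2i+2$ at each step, and summing over $k$ double-counts each edge with total weight exactly $i+2$, giving $(i+2)|E(G)|\geq (2i+2)|V(G)|$ directly, for all $i\geq 1$ at once. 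If you want to salvage a discharging proof, you would need weaker local structure (e.g.\ degree-$2$ vertices must be flag vertices attached to high-degree vertices, bounds on how many flags a vertex can carry relative to its cycle degree, etc.) together with charge transfer from high-degree vertices to their degree-$2$ neighbors; but you should start from the correct observation that degree-$2$ vertices do occur, not from the incorrect claim $\delta(G)\geq i+1$.
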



\begin{lemma}\label{L11inHendrey} Let $k\geq 0$.
If  a cover $\Cov{H} = (L,\HH)$ 
 of a graph $G$ {satisfies}
 \begin{equation}\label{HW1}
\mbox{ for every $u\in V(G)$, $d(u) +1 \leq |L(u)| (i+1)$,}
\end{equation}
 then $\Cov{H}$ is $(i,i)$-colorable.
\end{lemma}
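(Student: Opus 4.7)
The plan is to adapt the Hendrey--Wood local-swap argument from list coloring to DP-coloring, using the edge count of the induced subgraph as the potential. Among all $\Cov{H}$-maps $\phi$ of $G$, I would fix one that minimizes $|E(\HH_\phi)|$, and then argue that every vertex of $\HH_\phi$ already has degree at most $i$ in $\HH_\phi$.

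Suppose, toward a contradiction, that some $x\in V(\HH_\phi)$ has $\deg_{\HH_\phi}(x)\geq i+1$. By Definition~\ref{defn:cover}(1), there is a unique $v\in V(G)$ with $x\in L(v)$, and $\phi(v)=x$. For each $y\in L(v)$, let $\phi_y$ be the $\Cov{H}$-map obtained from $\phi$ by resetting the value at $v$ to $y$ (so $\phi_x=\phi$; the partition property of $L$ ensures $\phi_y$ is still injective), and set
\[
 s(y) \;:=\; \deg_{\HH_{\phi_y}}(y) \;=\; \bigl|\{w\in N_G(v):\phi(w)\in N_\HH(y)\}\bigr|,
\]
where the second equality uses Definition~\ref{defn:cover}(3) and the partition property to rule out contributions from inside $L(v)$ or from $L(w)$ with $w\notin N_G(v)\cup\{v\}$. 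The plan is to locate some $y\neq x$ with $s(y)\leq i$, since the swap from $\phi$ to $\phi_y$ will then strictly decrease the potential, contradicting minimality.

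The crux is the averaging identity
\[
 \sum_{y\in L(v)} s(y) \;=\; \sum_{w\in N_G(v)} |N_\HH(\phi(w))\cap L(v)| \;=\; d(v).
\]
The first equality is a swap of summation order; the second uses Definition~\ref{defn:cover}(\ref{item:matching}) together with the perfect-matching convention, because each edge $vw$ of multiplicity $k$ gives every vertex of $L(w)$ exactly $k$ neighbors in $L(v)$. Combined with the hypothesis $d(v)+1\leq |L(v)|(i+1)$, this yields $\sum_{y\in L(v)} s(y) < |L(v)|(i+1)$, so some $y^\star\in L(v)$ satisfies $s(y^\star)\leq i$. Since $s(x)=\deg_{\HH_\phi}(x)\geq i+1$, we must have $y^\star\neq x$, and replacing $\phi$ by $\phi_{y^\star}$ changes $|E(\HH_\phi)|$ by $s(y^\star)-\deg_{\HH_\phi}(x)\leq i-(i+1)=-1$, a contradiction.

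The only point that requires care is the double count, since one has to keep straight the multiplicities in $G$, the perfect-matching structure from Definition~\ref{defn:cover}(\ref{item:matching}), and the partition property of $L$; once the averaging identity is set up correctly, the remainder is a routine minimum-counterexample swap.
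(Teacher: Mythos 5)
Your proof is correct and follows essentially the same line as the paper's: choose an $\Cov{H}$-map minimizing $|E(\HH_\phi)|$, then argue by averaging (the paper writes it as the floor bound $\lfloor d(v)/|L(v)|\rfloor\leq i$, you write it as the double-count $\sum_{y\in L(v)}s(y)=d(v)$) that some alternative $y^\star\in L(v)\setminus\{\phi(v)\}$ has low degree, and swap to reach a contradiction. The extra care you take with the summation identity and the perfect-matching/multiplicity bookkeeping is a slightly more explicit presentation of the same step, not a different method.
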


\begin{proof} Choose an $\Cov{H}$-map $\phi$ with minimum  $|E(\HH_{\phi})|$. Suppose there is $v\in V(G)$ such that $d_{\HH_{\phi}}(\phi(v))\geq i+1$. 
By~(\ref{HW1}), there is $\alpha\in L(v)-\phi(v)$ such that 
$$|N(\alpha)\cap V(\HH_{\phi})|\leq \left\lfloor\frac{d(v)}{|L(v)|}\right\rfloor\leq   \left\lfloor\frac{|L(v)| (i+1)-1}{|L(v)|}\right\rfloor=i.$$
  Define a map $\phi'$ as follows: $\phi'(v) = \alpha$, and $\phi'(u) = \phi(u)$ for every $u\in V(G)\setminus \{v\}$. Then $|E(\HH_{\phi'})|\leq |E(\HH_{\phi})| - 1$, a contradiction. 
\end{proof}
Let $G$ be an $(i,i)$-critical graph. 
  For $X\subset V(G)$ and $v\in V(G)$, let $d_X(v)=|N(v)\cap X|$.

\begin{lemma}\label{L13inHendrey} For   every 
 partition $V(G) = A\sqcup B$ with $A\neq\emptyset$ and $B\neq \emptyset$, there is $v\in B$ such that $(i+1)d_A(v) + d_B(v) \geq 2i+2$.
\end{lemma}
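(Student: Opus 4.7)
My plan is to prove the lemma by contradiction. Suppose every $v\in B$ satisfies $(i+1)d_A(v)+d_B(v)\leq 2i+1$; I will exhibit an $(i,i)$-coloring of an arbitrary cover $\Cov{H}=(L,\HH)$ of $G$, contradicting the $(i,i)$-criticality of $G$. Because $i\geq 1$, the hypothesis already forces $d_A(v)\leq 1$ on $B$: if $d_A(v)\geq 2$, then $(i+1)d_A(v)\geq 2i+2$. So I partition $B=B_0\sqcup B_1$ with $B_t=\{v\in B:d_A(v)=t\}$, and obtain the two ``budgets'' $d_B(v)\leq 2i+1$ on $B_0$ and $d_B(v)\leq i$ on $B_1$.

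Fix an arbitrary cover $\Cov{H}$ of $G$. Since $B\neq\emptyset$, $G[A]$ is a proper subgraph of $G$, so by criticality the restriction of $\Cov{H}$ to $G[A]$ admits an $(i,i)$-coloring $\phi_A$. I then build a modified cover $\Cov{H}'=(L',\HH')$ of $G[B]$ that encodes the conflicts with $\phi_A$: for each $v\in B_1$ with its unique $A$-neighbor $u$, the single perfect matching between $L(u)$ and $L(v)$ in $\HH$ pairs $\phi_A(u)$ with a distinguished vertex $\alpha_v^{\ast}\in L(v)$, and I set $L'(v)=L(v)\setminus\{\alpha_v^{\ast}\}$; for $v\in B_0$ I keep $L'(v)=L(v)$, and I take $\HH'$ to be the subgraph of $\HH$ induced on $\bigcup_{v\in B}L'(v)$. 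Then $|L'(v)|=2-d_A(v)$, and the two budgets read exactly as the hypothesis $d_{G[B]}(v)+1\leq |L'(v)|(i+1)$ of Lemma~\ref{L11inHendrey}. That lemma therefore produces an $(i,i)$-coloring $\phi_B$ of $\Cov{H}'$.

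I then argue that $\phi=\phi_A\cup\phi_B$ is an $(i,i)$-coloring of $\Cov{H}$. The main thing to check carefully is the simultaneous cancellation of the two cross-contributions to the defect between $A$ and $B$: for each $v\in B_1$ with $A$-neighbor $u$, the choice $\phi_B(v)\neq\alpha_v^{\ast}$ together with $\mu(uv)=1$ (so $\alpha_v^{\ast}$ is the \emph{only} vertex of $L(v)$ adjacent in $\HH$ to $\phi_A(u)$) forces $\phi_B(v)\not\sim\phi_A(u)$. This one observation simultaneously kills the $A$-to-$v$ contribution to $\phi_B(v)$'s defect \emph{and} the $B$-to-$u$ contribution to $\phi_A(u)$'s defect. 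Combined with the internal bounds from $\phi_A$ on $G[A]$ and $\phi_B$ on $\Cov{H}'$, every vertex of $\HH_\phi$ has degree at most $i$, yielding the desired contradiction. The threshold $2i+1$ in the hypothesis is exactly what is needed for the mixed list sizes $|L'(v)|\in\{1,2\}$ to fit Lemma~\ref{L11inHendrey} with no slack, which is why the $(i+1)$-weighting of $d_A(v)$ in the statement is sharp.
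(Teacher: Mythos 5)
Your proposal is correct and follows essentially the same route as the paper: color $G[A]$ using criticality, trim each list $L(v)$ for $v\in B$ by removing the vertex adjacent to $\phi_A$'s image, check that the resulting list sizes satisfy the hypothesis $d_{G[B]}(v)+1\leq |L'(v)|(i+1)$ of Lemma~\ref{L11inHendrey}, and glue. The only cosmetic difference is that you make the observation $d_A(v)\leq 1$ explicit and split $B=B_0\sqcup B_1$, whereas the paper leaves this implicit by writing the inequality $|L'(v)|\geq 2-d_A(v)\geq \frac{d_B(v)+1}{i+1}$ directly.
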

\begin{proof}  Suppose there is a partition $A\sqcup B$   with $A\neq\emptyset$ and $B\neq \emptyset$ such that
\begin{equation}\label{HW3}
\mbox{\em for every $v\in B$, $(i+1)d_A(v) + d_B(v) \leq 2i+1$.}
\end{equation}
 Let $\Cov{G} = (L,\HH)$ be a $2$-fold cover on $G$ such that $\HH$ does not have an 
$(i,i)$-coloring.
 Let $\HH^A$ (respectively,  $\HH^B$) denote the subgraph of $\HH$ corresponding to $G[A]$ (respectively, $G[B]$). Since $G$ is $(i,i)$-critical, $\HH^A$ has an 
 $(i,i)$-coloring $\phi$. For every $v\in B$, form $L'(v)$ from $L(v)$ by excluding from it every $v_{\alpha}$  such that $v_{\alpha}$ has a neighbor in $\HH^A_{\phi}$. 
 Then $|L'(v)|\geq 2 - d_A(v)$ for each $v\in B$. By~\eqref{HW3}, this is at least
 $2-\frac{2i+1-d_B(v)}{i+1}= \frac{d_B(v)+1}{i+1}$. Hence by Lemma~\ref{L11inHendrey}, $\HH^B$ has an $(i,i)$-coloring $\phi'$. Then the representative map $\psi$ defined by $\psi(w) = \phi(w)$ for $w\in A$ and $\psi(w) = \phi'(w)$ for $w\in B$ is an $(i,i)$-coloring on $\HH$, a contradiction to the choice of $G$. 
\end{proof}

Let $v_1,\dots, v_p\in V(G)$ be a maximal sequence such that for every $k\in[p]$, 
$$(i+1)d_{A_k}(v_k)+d_{B_k}(v_k)\geq 2i+2, \mbox{\em  where $A_k := \{v_1,\dots, v_k\}, B_k := V(G)\setminus A_k$. }$$
By Lemma~\ref{L13inHendrey}, $p = |V(G)|$. Then 
$$
\sum_{k = 1}^{|V(G)|}\Big((i+1)d_{A_k}(v_k)+d_{B_k}(v_k)\Big) \geq (2i+2)|V(G)|.
$$
On the other hand, every edge of $G$ contributes to the sum $\sum_{k = 1}^{|V(G)|}\Big((i+1)d_{A_k}(v_k)+d_{B_k}(v_k)\Big)$ exactly  $i+2$.
It follows that  $(i+2)|E(G)|\geq (2i+2)|V(G)|$, 
as claimed. This proves Proposition~\ref{prop:dd}.

\section{A more general model}\label{Se4}
When $j>i$, we will need the following more general
 framework. Instead of $(i,j)$-colorings of a cover $\Cov{H}$ of a graph $G$, we will consider $\Cov{H}$-maps $\phi$ with variable restrictions 
 on the degrees of the vertices in $H_\phi$. Furthermore, we will define potentials of vertex subsets of $G$ so that the lower is a potential of a set $W$, the larger is the average degree of $G[W]$. We will prove existence of our variable colorings in graphs with no subsets of ``low" potential, and will derive  our main result,
   Theorem~\ref{main}, as a partial case of our bounds.

For a graph $G$, a {\em toughness function on $G$} is a mapping ${\bf t} : V(G)\to \{0,1,\ldots,j+1\}$. A pair $(G,{\bf t})$ where $G$ is a graph and
${\bf t}$ is a toughness function will be called a {\em weighted pair}.

\begin{defn}[An $(i,j, {\bf t})$-coloring]\label{def3t} 
 {\em
Given a weighted pair $(G,{\bf t})$ and a cover $\Cov{H} = (L, \HH)$  of $G$, an {\em $(i,j, {\bf t})$-coloring} of $\HH$ is
a $\Cov{H}$-map $\phi$
		such that  the degree of every poor vertex $p(v)$ in $\HH_\phi$ is at most $i-{\bf t}(v)$ and  the degree of every rich vertex $r(v)$ in $\HH_\phi$ is at most 
		$j-{\bf t}(v)$. (If $i-{\bf t}(v)<0$ (respectively, $j-{\bf t}(v)<0$), this means $\phi(v)$ cannot be $p(v)$ (respectively,
		$\phi(v)$ cannot be $r(v)$).
		
  A vertex $v \in V(G)$ is $k$-{\em tough} in $(G,{\bf t})$ if ${\bf t}(v)=k$. }
\end{defn}

If ${\bf t} \equiv 0$, then any $(i,j, {\bf t})$-coloring of a graph $G$ is an  $(i,j)$-coloring in the  sense of Definition~\ref{def3}. So, Definition~\ref{def3t} is 
a refinement of Definition~\ref{def3}. Similarly the next definition refines Definition~\ref{ijcr}.

\begin{defn}[$(i,j)$-critical pairs.]\label{ijcr2}
Given $0\leq i\leq j$ and a weighted pair $(G,{\bf t})$, we say that  $(G,{\bf t})$ is   \emph{$(i, j)$-critical}, if $G$ is not $(i, j,{\bf t})$-colorable, 
but every proper subgraph of $G$ is. 
\end{defn}

We will measure the sparsity of our graphs with  so called  potential function.



\begin{defn}\label{DEF-P}
If $j\neq i+1$ or $j\leq 2$, given  a weighted pair $(G,{\bf t})$, the {\em $(i,j,{\bf t})$-potential} of a vertex $v\in V(G)$ is defined by 
\begin{equation}\label{eq:rho1}
\rho_{G,{\bf t}}(v):= a_{i,j}+{\bf t}(v)\cdot (a_{i,j}-2b_{i,j}),
\end{equation}
where $a_{i,j}:= b_{i,j}:=1$ when $i=0$, $a_{i,j}:=2i+1$ and $b_{i,j}:=i+1$ when $i \geq 1$ and $j \geq 2i+1$,  $a_{i,j}:=2j$ and $b_{i,j}:=j+1$ 
when $i\geq 1$ and $2i \geq j \geq i+2$. 

In other words,
\begin{equation}\label{eq:rho}
\rho_{G,{\bf t},i,j}(v):= \left\{\begin{array}{ll} 1-{\bf t}(v),& \mbox{ if } i=0;\\
2i+1-{\bf t}(v),& \mbox{ if } i\geq 1  \mbox{ and }j\geq 2i+1;\\
2j-2{\bf t}(v),& \mbox{ if } i\geq 1  \mbox{ and }i+2\leq j\leq 2i.
\end{array}
\right.
\end{equation}
For a subset $S\subseteq V(G)$, the $(i,j,{\bf t})$-potential of $S$ is defined by
\begin{equation}\label{eq:PS}
\rho_{G,{\bf t},i,j}(S):= \sum_{v\in S}\rho_{G,{\bf t}}(v) - b_{i,j}\cdot |E(G[S])|.
\end{equation}
 The $(i,j,{\bf t})$-potential of a graph $G$ is defined by $\rho_{{\bf t},i,j}(G):= \min_{S\subseteq V(G)}\rho_{G,{\bf t},i,j}(S)$.
\end{defn}
 
 When $i$ and $j$ are clear from the context, we will drop these subscripts from the notation $\rho_{G,{\bf t},i,j}(S)$ and will call 
 the $(i,j,{\bf t})$-potential of $S$ simply {\em the potential of $S$}.
Let $w_k(i,j)=a_{i,j}+k (a_{i,j}-2b_{i,j})$, i.e., $w_k(i,j)$ is the $(i,j,{\bf t})$-potential of a $k$-tough vertex in $(G,{\bf t})$. 
 
 In the next four sections  we prove the following theorem. 
\begin{theorem}\label{Potential form}
Let $(G,{\bf t})$ be an $(i, j)$-critical weighted pair, where  $j\neq i+1$ and ${\bf t}$ is an arbitrary toughness function. Then $\rho_{{\bf t},i,j}(G)\leq w_{j+1}(i,j)$.
In particular,

{\rm (1)} If $i=0$ and $j\geq 1$, then $\rho_{{\bf t},i,j}(G) \leq -j$.

{\rm (2)} If $i\geq 1$ and $j \geq 2i+1$, then $\rho_{{\bf t},i,j}(G) \leq 2i-j$.

{\rm (3)} If $i\geq 1$ and $2i \geq j \geq i+2$, then $\rho_{{\bf t},i,j}(G) \leq -2$.

\end{theorem}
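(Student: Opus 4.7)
The plan is to argue by contrapositive, exploiting the criticality assumption. Suppose $(G,{\bf t})$ is $(i,j)$-critical but $\rho_{{\bf t},i,j}(G)>w_{j+1}(i,j)$. Then I aim to construct an $(i,j,{\bf t})$-coloring of every $2$-fold cover $\Cov{H}=(L,\HH)$ of $G$, which contradicts criticality since the latter forces some cover to admit no such coloring. The three explicit bounds in parts (1)--(3) then follow by plugging the relevant values from~\eqref{eq:rho} into $w_{j+1}(i,j)=a_{i,j}+(j+1)(a_{i,j}-2b_{i,j})$: this yields $-j$ when $a_{i,j}=b_{i,j}=1$, $2i-j$ when $a_{i,j}=2i+1$ and $b_{i,j}=i+1$, and $-2$ when $a_{i,j}=2j$ and $b_{i,j}=j+1$.

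The core ingredient I would establish is an \emph{extension lemma}: any weighted pair $(G',{\bf t}')$ satisfying $\rho_{{\bf t}',i,j}(S)>w_{j+1}(i,j)$ for every nonempty $S\subseteq V(G')$ is $(i,j,{\bf t}')$-colorable. I would prove this by induction on $|V(G')|+|E(G')|$. At the inductive step, I pick a subset $T\subseteq V(G')$ of minimum potential, color $G'-T$ using the inductive hypothesis, and then extend across the boundary. Every edge $uv$ with $u\in V(G')\setminus T$ already colored and $v\in T$ uses the matching between $L(u)$ and $L(v)$ guaranteed by Definition~\ref{defn:cover}(\ref{item:matching}) to forbid one element of $L(v)$; the effect on $v$ can be absorbed into an updated toughness ${\bf t}''$ on $T$. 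The constants $a_{i,j}$ and $b_{i,j}$ are calibrated so that each boundary edge at $v$ drops $\rho_{{\bf t}'}(v)$ by exactly $|a_{i,j}-2b_{i,j}|$, and a short accounting argument then gives $\rho_{{\bf t}'',i,j}(S)>w_{j+1}(i,j)$ for every $\emptyset\neq S\subseteq T$, so that induction can be reapplied to $(G'[T],{\bf t}'')$.

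The three regimes differ in which fibre $p(v)$ or $r(v)$ is the bottleneck for extension. When $i=0$, a poor vertex can absorb no same-color neighbor, so each external edge raises the effective toughness by $1$, matching $b_{i,j}=1$. When $i\geq 1$ and $j\geq 2i+1$, rich vertices are so permissive that they never saturate before poor ones do, and $b_{i,j}=i+1$ measures the slack at $p(v)$ before $r(v)$ becomes the only option. When $i+2\leq j\leq 2i$, both $i-{\bf t}(v)$ and $j-{\bf t}(v)$ tighten simultaneously, so each toughness step depletes \emph{both} budgets, costing $b_{i,j}=j+1$ per edge and $2$ units of potential per toughness step. The excluded case $j=i+1$ creates a more delicate balance that resists this single-parameter calibration and is handled separately.

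The main obstacle will be the bookkeeping in the extension step when the cover contains parallel matchings: a multi-edge $uv\in E(G)$ of multiplicity $2$ can impose two restrictions on $L(v)$, and must be compensated by a toughness jump of two units rather than one. One must verify that after the toughness update, the strict potential inequality is preserved \emph{for every} subset $S\subseteq T$, not merely for $T$ itself, so that the inductive hypothesis applies again. Identifying the right minimum-potential subset to peel off at each step, and handling the borderline configurations where the toughness would exceed $j+1$, will be the technical crux and will drive the case analysis in the subsequent sections.
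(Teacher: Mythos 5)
Your proposal reformulates the theorem as an ``extension lemma'' --- that any weighted pair all of whose nonempty subsets have potential exceeding $w_{j+1}(i,j)$ is $(i,j,\mathbf{t})$-colorable --- to be proved by induction on $|V|+|E|$. This is a genuine reformulation (the two statements are equivalent, since any non-colorable graph contains a critical subgraph whose potential is at least that of the ambient graph), and it is a different framing from the paper, which instead takes a minimum counterexample $(G,\mathbf{t})$ to the theorem itself, fixes a cover $\HH$ with no $(i,j,\mathbf{t})$-coloring, and establishes structural facts about $G$ separately in each of the three regimes (all matchings even and a discharging argument when $i=0$; a star structure when $j\geq 2i+1$ and when $i+2\leq j\leq 2i$), crucially exploiting criticality and the fixed non-colorable cover.

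However, your inductive step has a concrete gap. You pick $T$ of \emph{minimum} potential, color $G'-T$ first, and want to color $(G'[T],\mathbf{t}'')$ where $\mathbf{t}''$ absorbs the boundary-edge restrictions. For a subset $S\subseteq T$ with $k$ boundary edges to $V(G')\setminus T$, the toughness update costs $k\cdot(2b_{i,j}-a_{i,j})$ units of potential, so $\rho_{\mathbf{t}''}(S)=\rho_{\mathbf{t}'}(S)-k(2b_{i,j}-a_{i,j})$. Since $\rho_{\mathbf{t}'}(S)$ may already be only slightly above $w_{j+1}$ and $k$ is uncontrolled, the ``short accounting argument'' you invoke simply does not close. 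Worse, choosing $T$ of \emph{minimum} potential is counterproductive: minimum-potential sets are dense relative to their size and typically have many boundary edges, and coloring their complement first gives you no leverage. The paper's Lemma~\ref{lem:Jingweigeneral} works in the opposite direction: it takes a maximal \emph{low}-potential set $S$, proves the key structural fact that each external vertex sends at most one edge into $S$ (because a second edge would push $\rho(S\cup\{v\})$ below $w_{j+1}$), colors $S$ first, and then \emph{contracts} $S$ to a single $j$-tough vertex $v^*$ before coloring the quotient. The contraction --- not a toughness update on the boundary of $T$ --- is what makes the potential accounting go through, and the one-edge bound is exactly the control you are missing. Furthermore, a single-component toughness increment on $v$ pays for restrictions on \emph{both} $p(v)$ and $r(v)$ simultaneously, while a boundary edge only restricts one of them, so your update wastes potential; the paper tolerates this waste only because it also has criticality-specific structure (Lemmas~\ref{evenmatching}, \ref{0j:neighbor}, \ref{LM-J-1}, \ref{LM2}, \ref{LM-Ma-1}, \ref{LM-Ma-2}) that your extension-lemma framework discards. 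As written, the proposal would not go through.
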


Observe that if we take ${\bf t} \equiv 0$, then Theorem \ref{Potential form} implies the lower bounds of Parts  1, 2 and 3 of Theorem \ref{main}.
 In other words, we are proving a generalization of these parts of Theorem \ref{main}.

\section{Preliminaries}

For a graph $G$ and disjoint sets $U,W\subset V(G)$, $E_G(U,W)$ denotes the set of the edges of $G$ with one end in $U$ and one in $W$. If $U=\{u\}$ and $W=\{w\}$, then instead of $E_G(U,W)$ we write $E_G(u,w)$.
 If $e\in E_G(u,v)$ and 
 $\Cov{H} = (L,\HH)$ is a cover of $G$, then $M_\HH(e)$ (or simply $M(e)$ when $\HH$ is clear from the context) denotes the matching between $L(u)$ and
 $L(v)$ in $\HH$ corresponding to $e$.
 
 For $e\in E_G(u,v)$, a matching $M(e)$ is {\em even} if its edges are $r(u)r(v)$ and $p(u)p(v)$, and 
 is {\em odd} otherwise, i.e.,  if its edges are $r(u)p(v)$ and $p(u)r(v)$.


We will use the following lemmas at various points:

\begin{lemma}\label{lem:Jingweigeneral}
For nonnegative integers $i,j$ with $i\leq j$, suppose  Theorem \ref{Potential form} does not hold, and  $(G,{\bf t})$ is an
 $(i,j)$-critical pair  of minimum order with potential larger than $w_{j+1}(i,j)$.
 Then every nonempty $S\subsetneq V(G)$ with $\rho_{G,{\bf t}}(S)\leq w_{j}$ consists of a single $j$-tough vertex.
\end{lemma}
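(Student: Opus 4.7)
The plan is to split on $|S|$. When $|S|=1$, say $S=\{v\}$, the key observation is that in each of the three regimes of Definition~\ref{DEF-P} the coefficient $a_{i,j}-2b_{i,j}$ is negative (equal to $-1,-1,-2$, respectively), so $w_k(i,j)$ is strictly decreasing in $k$. The counterexample hypothesis $\rho_{\t,i,j}(G) > w_{j+1}(i,j)$ then forces every $u \in V(G)$ to satisfy $\t(u) \leq j$ (otherwise $\rho(\{u\}) = w_{\t(u)} \leq w_{j+1}$, already contradicting the counterexample property). Combined with $\rho(\{v\}) = w_{\t(v)} \leq w_j$ and the same strict monotonicity, which forces $\t(v) \geq j$, this yields $\t(v) = j$, as required.

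When $|S|\geq 2$, the plan is to contract $S$ to a single $j$-tough vertex and contradict the minimality hypothesis. Let $G^* := G/S$ be the contracted multigraph (with $S$ replaced by a single new vertex $v_S$; edges within $S$ are discarded, and edges from $V(G)\setminus S$ to $S$ become edges from the same endpoint to $v_S$ with the same total multiplicity), and set $\t^*(v_S):=j$ and $\t^*|_{V(G)\setminus S} := \t|_{V(G)\setminus S}$. A direct bookkeeping (using $\rho^*(v_S)=w_j$) gives, for every $T^* \subseteq V(G^*)$,
\[
\rho_{G^*,\t^*}(T^*) =
\begin{cases}
\rho_{G,\t}(T^*), & v_S \notin T^*,\\
\rho_{G,\t}\bigl((T^*\setminus\{v_S\})\cup S\bigr) - \rho_{G,\t}(S) + w_j, & v_S \in T^*.
\end{cases}
\]
Combined with $\rho_{G,\t}(S) \leq w_j$ and $\rho_{\t,i,j}(G) > w_{j+1}$, both lines yield $\rho_{G^*,\t^*}(T^*) > w_{j+1}$, so $\rho_{\t^*,i,j}(G^*) > w_{j+1}$.

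The heart of the argument is to show $(G^*,\t^*)$ is not $(i,j,\t^*)$-colorable. I would fix a cover $\Cov{H}=(L,\HH)$ of $G$ with no $(i,j,\t)$-coloring and (using that $G[S]$ is a proper subgraph of $G$) fix an $(i,j,\t|_S)$-coloring $\phi_S$ of $\Cov{H}|_{G[S]}$. From these data, build a cover $\Cov{H}^*=(L^*,\HH^*)$ of $G^*$ that agrees with $\Cov{H}$ on $V(G)\setminus S$, assigns a fresh pair $L^*(v_S)=\{p^*,r^*\}$, and, for each edge $e^*=v_Sw$ of $G^*$ descending from $e=uw$ with $u\in S$, takes $M_{\HH^*}(e^*)$ to pair $r^*$ with the $L(w)$-vertex matched to $\phi_S(u)$ under $M_{\HH}(e)$ (and $p^*$ with the other vertex of $L(w)$). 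Since $\t^*(v_S)=j$, any $(i,j,\t^*)$-coloring $\phi^*$ of $\HH^*$ must send $v_S$ to $r^*$ (the poor option demands degree $\leq i-j<0$) at degree $0$ in $\HH^*_{\phi^*}$; by the matching choice, this rigidity is equivalent to saying that the combined map $\phi := \phi_S \sqcup \phi^*|_{V(G)\setminus S}$ creates no $\HH$-edge between $S$ and $V(G)\setminus S$. Consequently the degree of $\phi(v)$ in $\HH_\phi$ equals that of $\phi_S(v)$ in $\HH_{\phi_S}$ for $v\in S$, and equals that of $\phi^*(v)$ in $\HH^*_{\phi^*}$ for $v\notin S$; both already obey the toughness constraints, so $\phi$ is an $(i,j,\t)$-coloring of $\HH$, contradicting the choice of $\Cov{H}$. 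Hence $(G^*,\t^*)$ is not $(i,j,\t^*)$-colorable, and extracting a minimal non-$(i,j,\t^*)$-colorable subgraph produces an $(i,j)$-critical pair $(G',\t^*|_{V(G')})$ with $|V(G')|<|V(G)|$ and $\rho(G')\geq\rho(G^*)>w_{j+1}$, contradicting the minimality of $(G,\t)$. The main obstacle is this matching-level construction of $\Cov{H}^*$ and the verification that the degree-zero rigidity at $r^*$ truly forbids every cross edge in $\HH_\phi$; otherwise a vertex of $S$ could be pushed past its toughness threshold by boundary edges, breaking the lifting.
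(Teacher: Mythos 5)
Your proof is correct and follows the same strategy as the paper's: contract the low-potential set $S$ to a single $j$-tough vertex, verify the contracted pair retains potential above $w_{j+1}$ (you give exactly the bookkeeping identity the paper implicitly uses), build a cover of the contracted multigraph tailored to a criticality-supplied coloring of $\HH[S]$ so that the matchings at $v_S$ ``remember'' where $\phi_S$ went, use the minimality hypothesis to get a coloring of that cover, and lift it back. Your separate treatment of $|S|=1$ via the strict monotonicity of $w_k$ in $k$ is a small but genuine improvement in presentation: the paper's contraction does not strictly decrease the order when $|S|=1$, so this case really does need the separate (trivial) argument that $\rho(\{v\})\le w_j$ and $\t(v)\le j$ force $\t(v)=j$. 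One detail you leave implicit is that the forcing $\phi^*(v_S)=r^*$ relies on $i<j$, which does hold in every regime where Theorem~\ref{Potential form} is stated; the paper likewise takes this for granted.
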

\begin{proof}
Suppose the lemma fails. Choose a maximum $S\subsetneq V(G)$ with $\rho_{G,{\bf t}}(S)\leq w_{j}$.
  Let $\Cov{H} = (L,\HH)$ be a cover on $G$ such that $\HH$ does not have an $(i,j)$-coloring. 
  Note that for every $v\in V(G)\setminus S$, we have $|N(v)\cap S|\leq 1$, since otherwise 
  $$\rho(S\cup\{v\})\leq w_{j}+(a_{i,j}-2b_{i,j}) = w_{j+1},$$ which contradicts the assumption on $(G,{\bf t})$. 
  Form a pair $(G',{\bf t'})$ from $(G,{\bf t})$ as follows: 
  \\
  (a) Let $V(G') = V(G)\setminus S \cup \{v^*\}$;
\\  
   (b) let ${\bf t'}(u)={\bf t}(u)$ for every $u\in V(G')\setminus\{v^*\}$ and ${\bf t'}(v^*) = j$; 
 \\  
   (c) for each edge $e\in E_G(uz)$ with $u\in S$ and $z\in V(G)\setminus S$, add an edge  between $v^*$ and $z$.
   
   If $\rho(G',{\bf t'})\leq w_{j+1}$, let $S'\subsetneq V(G')$ be a maximal subset with $\rho_{G',{\bf t'}}(S')\leq w_{{j}+1}$. By construction of $(G',{\bf t'})$, $v^*\in S'$. Let $S'' = S'\setminus\{v^*\}$. Then
$$\rho_{G,{\bf t}}(S''\cup S) = \rho_{G,{\bf t}}(S) + \rho_{G',{\bf t'}}(S') - \rho_{G',{\bf t'}}(v^*) \leq w_{j} + w_{j+1} - w_{j} = w_{j+1},$$ a contradiction to $\rho(G,{\bf t})>w_{j+1}$. Hence such $S'$ does not exist, and  $\rho(G',{\bf t'})>w_{j+1}(i,j)$.

Denote the subgraph of $\HH$ induced by $\HH[L(S)]$ by $\HH[S]$. Since $(G,{\bf t})$ is $(i,j)$-critical, $\HH[S]$ has an $(i,j,\t)$-coloring $\phi$. For every 
$z\in N_{G}(S)$ and its neighbor $u\in S$,
for each $e\in E_G(u,z)$, denote the neighbor of $\phi(u)$ in $M(e)$ by $z_a(e)$, and the other vertex in $L(z)$ by $z_b(e)$. Let $\Cov{G'}' = (L', \HH')$ be a cover of $G'$, such that : \\
1) $L'(v^*) = \{p(v^*),r(v^*)\}$; \\
2) for every $z\in N(v^*)$ and every edge $e\in E_G(S,z)$, $p(v^*)$ is adjacent to $z_b(e)$ and $r(v^*)$ is adjacent to $z_a(e)$; \\
3) for every edge $xy\in E(G')$ such that neither of $x$ nor $y$ is equal to $v^*$, $\HH'[\{x,y\}] = \HH[\{x,y\}]$. 

Then by the minimality of $(G,{\bf t})$, $\HH'$ has an $(i,j,\t)$-coloring $\psi$. Since ${\bf t'}(v^*) = j$, $\psi(v^*) = r(v^*)$ and $r(v^*)$ has degree $0$ in $\HH'_{\psi}$. Now we define an $\Cov{H}$-map  $\sigma$  by $\sigma(z)=\phi(z)$ for every $z\in S$, and $\sigma(z) = \psi(z)$ for every $z\in V(G)\setminus S$. By the construction of $G'$, for every $vu\in E(G)$ such that $v\in S$ and $u\in V(G)\setminus S$, $\sigma(v)$ is not adjacent to $\sigma(u)$. Hence $\sigma$ is an $(i,j,{\bf t})$-coloring of $\HH$, a contradiction.
\end{proof}

\begin{lemma}\label{degree} If $(G,{\bf t})$ is an $(i,j)$-critical pair and $v\in V(G)$ with $d(v)=1$,  then  $\t(v) \geq i+1$. 
\end{lemma}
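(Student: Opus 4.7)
The plan is a direct greedy-extension argument by contradiction. Suppose $v \in V(G)$ satisfies $d(v) = 1$ but $\t(v) \leq i$, and let $u$ be the unique neighbor of $v$ in $G$. Because $(G,\t)$ is $(i,j)$-critical, some cover $\Cov{H} = (L, \HH)$ of $G$ admits no $(i,j,\t)$-coloring; restricting $\Cov{H}$ to the proper subgraph $G - v$ yields a cover of $G-v$, which by criticality does admit an $(i,j,\t)$-coloring $\phi$. My goal is to extend $\phi$ to an $(i,j,\t)$-coloring of $\HH$, contradicting the choice of $\Cov{H}$.

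To perform the extension, I would set $\psi(w) = \phi(w)$ for every $w \neq v$, and let $\psi(v)$ be the unique vertex in $L(v)$ not matched to $\phi(u)$ by $M(vu)$. Such a vertex exists because $M(vu)$ is a perfect matching between the two-element sets $L(u)$ and $L(v)$, so $\phi(u)$ is matched to exactly one of $p(v), r(v)$, leaving the other unmatched. Since $u$ is the only neighbor of $v$ in $G$, the only edges of $\HH$ leaving $L(v)$ go into $L(u)$, and the unique such edge incident to $\psi(v)$ has been avoided by construction. Hence $\deg_{\HH_\psi}(\psi(v)) = 0$, and the degree of every other vertex in $\HH_\psi$ is unchanged from $\HH_\phi$, so the $(i,j,\t)$-degree constraints at all vertices $w\neq v$ continue to hold.

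It remains only to verify the constraint at $\psi(v)$ itself: if $\psi(v) = p(v)$ we need $0 \leq i - \t(v)$, and if $\psi(v) = r(v)$ we need $0 \leq j - \t(v)$. Both inequalities are immediate from the hypothesis $\t(v) \leq i \leq j$. Therefore $\psi$ is an $(i,j,\t)$-coloring of $\HH$, a contradiction. I do not foresee any substantive obstacle: the entire argument exploits the fact that a degree-one vertex $v$ participates in exactly one matching $M(vu)$, so its contribution to degrees in $\HH_\psi$ can always be dodged by choosing $\psi(v)$ opposite to $\phi(u)$, and the resulting degree of $0$ at $\psi(v)$ is small enough to satisfy either poor or rich slack as soon as $\t(v) \leq i$.
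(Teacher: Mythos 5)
Your proof is correct and takes essentially the same approach as the paper's: delete $L(v)$, invoke criticality to color the rest, and extend by choosing the color in $L(v)$ not matched to $\phi(u)$, which has degree $0$ and so satisfies either slack once $\t(v)\leq i$. The only cosmetic difference is that the paper starts from an arbitrary cover and concludes $G$ is colorable, while you start from a putative bad cover and derive a contradiction; these are the same argument.
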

\begin{proof}
Suppose $\t(v)\leq i$, $d(v)=1$ and $N(v)=\{u\}$. Given an arbitrary cover $\Cov{H}=(L,\HH)$ of $G$, we consider the  graph $\HH'=\HH-L(v)$.
Since $(G,{\bf t})$ is $(i,j)$-critical, $\HH'$ has an $(i,j,{\bf t})$-coloring
 $\phi$. We extend $\phi$ to $\HH$ by letting $\phi(v)$ be the vertex in $L(v)$  not adjacent to $\phi(u)$.  
\end{proof}

\section{Proof of Theorem \ref{Potential form} for $(0,j)$-colorings}\label{sec:0k}
In this section, we  prove Part 1 of Theorem \ref{Potential form}: 
\begin{prop}\label{prop:0k}
Let $j\geq1$ be an integer, and let $(G,{\bf t})$ be a $(0,j)$-critical pair. Then $\rho(G,{\bf t})\leq-j$.
\end{prop}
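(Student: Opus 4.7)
The plan is to argue by contradiction: I take $(G,\t)$ to be a minimum-order $(0,j)$-critical counterexample to Proposition~\ref{prop:0k}, so $\rho_{G,\t}(S)\geq -j+1$ for every $S\subseteq V(G)$. The base case $|V(G)|=1$ is immediate, since the single vertex $v$ must have $\t(v)=j+1$ in order to obstruct every $\Cov{H}$-map, which gives $\rho(G)=-j$. For $|V(G)|\geq 2$ I would first record the structural consequences of the earlier lemmas: Lemma~\ref{degree} bars a degree-$1$ free vertex, criticality bars isolated vertices, $\t(v)\leq j$ for every $v$ (else $\{v\}$ is itself a non-colorable proper subpair, contradicting criticality of $G$), and Lemma~\ref{lem:Jingweigeneral} says that every proper nonempty $S\subsetneq V(G)$ with $\rho_{G,\t}(S)\leq 1-j$ is a singleton containing a $j$-tough vertex.

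The core of the argument is to fix a cover $\Cov{H}=(L,H)$ admitting no $(0,j,\t)$-coloring, select a suitable vertex $v$, and prove that some $\Cov{H}$-map of $G-v$ (guaranteed by criticality) extends to a $(0,j,\t)$-coloring of $G$. For any such map $\phi$, I would split $N(v)=N_p(\phi)\sqcup N_r(\phi)$ according to whether the matching $M(uv)$ joins $\phi(u)$ to $p(v)$ or to $r(v)$, so that $|N_p|+|N_r|=d(v)$. The extension $\phi(v)=p(v)$ succeeds on $v$'s side precisely when $|N_p|\leq -\t(v)$, and $\phi(v)=r(v)$ succeeds when $|N_r|\leq j-\t(v)$; ruling out both extensions therefore forces $d(v)\geq j+2$ when $\t(v)=0$ and $d(v)\geq j-\t(v)+1$ when $\t(v)\geq 1$. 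I would then combine these vertex-degree lower bounds with $2|E(G)|\geq\sum_v d(v)$ and the rearranged potential bound $|E(G)|\leq |V(G)|-\t(V(G))+j-1$ (which is just $\rho(V(G))\geq -j+1$) to deduce the desired contradiction via a double-counting computation.

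The main obstacle I foresee is that the extension $\phi(v)=r(v)$ also increments by one the degree in $H_\phi$ of every neighbor $u\in N_r$, which can break $u$'s own degree cap if $\phi(u)$ is already saturated; hence a ``$v$-side only'' analysis is insufficient. To handle this, I would either choose the coloring $\phi$ of $G-v$ carefully (selecting, among valid ones, a coloring that minimizes saturated neighbors of $v$), or replace $(G-v,\t)$ by a modified pair $(G',\t')$ in which the missing slack at an affected neighbor is absorbed as increased toughness, apply the inductive hypothesis to $(G',\t')$, and translate the coloring back. Verifying that the modification preserves $\rho(G',\t')>-j$ is where Lemma~\ref{lem:Jingweigeneral} becomes crucial, since any newly created low-potential subset of $G'$ must trace back to a subset of $G$ forbidden by that lemma.
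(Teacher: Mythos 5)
Your framework (minimal counterexample, Lemma~\ref{lem:Jingweigeneral}, the potential identity $\rho(V(G))=\sum_v(1-\t(v))-|E(G)|$) matches the paper, and the base case computation is fine. However, the central step of your argument has a genuine gap, which you yourself flag but do not close. The claim that ``ruling out both extensions therefore forces $d(v)\geq j+2$ when $\t(v)=0$'' is simply false for $(0,j)$-critical graphs: in the extremal examples of Section~\ref{sec:construction} (Figure~\ref{fig:0k}, with $\t\equiv 0$), most vertices have degree~$2$. The extension of a coloring of $G-v$ to $v$ fails not because $v$'s own cap is violated, but because the chosen color at $v$ pushes some neighbor $u$ over its cap; this failure mode gives no lower bound on $d(v)$ whatsoever. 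Consequently, the proposed double-counting ``$2|E(G)|\geq\sum_v d(v)\geq\dots$'' cannot be set up the way you describe, and neither of your two repair strategies is executed: ``choose $\phi$ minimizing saturated neighbors'' is too vague to supply a uniform degree bound, and ``absorb the slack as increased toughness on $N(v)$ and invoke minimality'' requires verifying the modified pair still has potential $>-j$, which in turn needs more structural control (e.g.\ on odd matchings and on $|N(v)|$) than you have established.

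The paper resolves exactly this obstacle by a different decomposition of the argument. It first proves Lemma~\ref{evenmatching} (every matching $M(e)$ is even), which normalizes the cover so that the only remaining obstruction is degree overflow on a fixed, cover-independent partition into poor and rich; this lemma is the key missing ingredient in your sketch and is where edge-deletion-with-toughness-increase and vertex-gluing modifications are carefully shown to preserve the potential bound. It then proves the much weaker structural fact that every $0$-tough vertex has at least three \emph{distinct} neighbors (Lemma~\ref{0j:neighbor}), not a large degree. Finally, a single vertex $v_0$ with $d(v_0)\geq j+1-\t(v_0)$ is produced by the trivial ``color everything rich'' observation, and a discharging rule sends the entire charge of every edge incident to $v_0$ into $v_0$ and splits other edges evenly; this yields $\mu(v_0)\leq -j$ and $\mu(v)\leq 0$ for all $v\neq v_0$, hence $\rho(V(G))\leq -j$. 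Your proposal would need to be rebuilt around an even-matching reduction and a local (rather than global) degree argument to go through; as written, it does not establish the bound.
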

Recall that $a_{0,j}=b_{0,j}=1$. By (\ref{eq:rho}), $w_k=1-k$, for every $k\in \{0,1,\dots,j+1\}$. 
 Suppose  the proposition does not hold, and  $(G,{\bf t})$ is a
 $(0,j)$-critical pair   with potential larger than $w_{j+1}(0,j)=-j$  with the minimum $|V(G)|+|E(G)|$.
 Let $\Cov{H} = (L,\HH)$ be a cover of $G$ such that $\HH$ does not have a $(0,j,\t)$-coloring. 
First, we analyse the structure of $G$ and $\Cov{H}$.

\begin{lemma}\label{evenmatching}
For  every edge $e\in E(G)$,  matching  $M(e)$ is even.
\end{lemma}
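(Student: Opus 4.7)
The plan is to derive a contradiction from the assumption that some edge $e = uv$ has odd matching $M(e)$, by constructing an explicit $(0,j,\t)$-coloring of $\HH$. By the $(0,j)$-criticality of $(G,\t)$, the sub-cover $\HH - M(e)$ (a cover of the proper subgraph $G - e$) admits a $(0,j,\t)$-coloring $\phi$. The first step is a four-way case analysis on $(\phi(u), \phi(v))$: since $M(e)$ odd means $M(e) = \{r(u)p(v), p(u)r(v)\}$, whenever $\phi(u)$ and $\phi(v)$ are of the same type (both rich or both poor), no edge of $M(e)$ has both endpoints in $\phi(V(G))$, so $\HH_\phi = (\HH - M(e))_\phi$ and $\phi$ is already a $(0,j,\t)$-coloring of $\HH$, contradicting the choice of $\HH$. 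Hence the pair must be mixed; by symmetry I take $\phi(u) = r(u)$ and $\phi(v) = p(v)$.

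Validity of $\phi$ on $\HH - M(e)$ then forces $\t(v) = 0$ and that $p(v)$ has degree zero in $(\HH - M(e))_\phi$. Equivalently, for every neighbor $w \in N_G(v) \setminus \{u\}$, the perfect matching $M(vw)$ pairs $\phi(w)$ with $r(v)$ rather than $p(v)$. Define the repair $\phi'$ by setting $\phi'(v) := r(v)$ and $\phi'(x) := \phi(x)$ otherwise. A direct count shows that $r(v) = \phi'(v)$ has exactly $d_G(v) - 1$ neighbors in $\HH_{\phi'}$: the contribution from $u$ vanishes because $M(uv)$ odd means $r(v) \not\sim r(u) = \phi'(u)$, while each of the other $d_G(v) - 1$ neighbors contributes exactly one edge by the preceding observation. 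Meanwhile, the modification affects each $\phi(w)$ for $w \in N_G(v) \setminus \{u\}$ by adding exactly one new neighbor (namely $r(v)$) in $\HH_{\phi'}$.

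The main obstacle is to verify that $\phi'$ satisfies all degree bounds, and there are two concerns. First, $r(v)$ must have degree at most $j - \t(v) = j$, which would require $d_G(v) \leq j + 1$. I plan to address this by a potential argument: if $d_G(v) \geq j + 2$, then combining $\t(v) = 0$ with the subset $\{v\} \cup N_G(v)$ (and possibly iteratively adjoining more vertices derived from further ``mixed'' analyses along edges $vw$) should yield a set $S$ whose potential $\rho_{G,\t,0,j}(S)$ is at most $w_j = 1 - j$, whereupon Lemma~\ref{lem:Jingweigeneral} forces $S$ to consist of a single $j$-tough vertex, contradicting $\t(v) = 0$ and $|S| \geq 2$. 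Second, each $\phi(w)$ for $w \in N_G(v) \setminus \{u\}$ may already be at its degree cap in $(\HH - M(e))_\phi$, so gaining one extra neighbor breaks its bound. I would handle this by iterating the local repair: when some $\phi(w)$ becomes over-saturated, apply criticality to the proper subgraph $G - w$ to produce an alternative local choice, and cascade, using the potential $\rho_{G,\t,0,j}$ as a monovariant, whose boundedness below by $w_{j+1} = -j$ (coming from the minimal counterexample hypothesis) guarantees that the cascade cannot continue indefinitely. The hardest step, and the one requiring the most care, is executing this cascading repair cleanly so it converges to a globally consistent $(0,j,\t)$-coloring of $\HH$ without cycling.
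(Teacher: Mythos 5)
Your first observation is correct and worth keeping: when $\phi(u)$ and $\phi(v)$ have the same parity and $M(e)$ is odd, adding $M(e)$ back contributes no edge to $\HH_\phi$, so $\phi$ is already a $(0,j,\t)$-coloring of $\HH$. That deals cleanly with half the possible restrictions of $\phi$.

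The trouble is the mixed case, $\phi(u)=r(u)$, $\phi(v)=p(v)$, and here both concerns you flag are genuine gaps with no visible repair. For the degree of $r(v)$: you correctly deduce $\t(v)=0$ and that $p(v)$ is isolated in $(\HH-M(e))_\phi$, which forces $r(v)$ to be adjacent to $\phi(w)$ for every edge $e'\neq e$ incident to $v$, so $d_{\HH_{\phi'}}(r(v))=d_G(v)-1$. But the potential argument you sketch to force $d_G(v)\le j+1$ does not go through: each edge in $G[\{v\}\cup N_G(v)]$ only lowers the potential by $b_{0,j}=1$, while each new vertex $w$ added to the set contributes $1-\t(w)$, which can be as large as $1$; so $\rho_{G,\t}(\{v\}\cup N_G(v))$ need not drop to $w_j=1-j$ even when $d_G(v)$ is huge. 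Indeed the critical examples $G_m$ in Section~\ref{sec:construction} have a $0$-tough vertex $v_0$ of arbitrarily large degree, so no such bound exists. For the second concern (each $\phi(w)$ gaining a neighbor), "cascading repair with potential as a monovariant" is a heuristic, not an argument: the potential of $(G,\t)$ is a static quantity of the weighted pair, not of a partial coloring, and there is nothing to prevent the repair from cycling or revisiting vertices.

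The paper's proof sidesteps both difficulties by never trying to repair $\phi$ at all. It branches not on the parity of $(\phi(u),\phi(v))$ but on whether some other matching between $L(u)$ and $L(v)$ is even. If yes (Case 1), delete \emph{two} edges $e,e'$ (one odd, one even) and \emph{increase} $\t(u),\t(v)$ by one; the toughness increase forces any coloring $\sigma$ of the reduced pair to pick $r(u),r(v)$ with degree at most $j-\t(\cdot)-1$, and only one of $M(e),M(e')$ joins $r(u)$ to $r(v)$, so putting the matchings back raises each degree by at most one --- exactly the slack the toughness change bought. If all matchings between $u$ and $v$ are odd (Case 2), glue $u$ with $v$ and $L(u)$ with $L(v)$ by parity; because every matching is odd, equal-parity choices for $u,v$ never conflict, so the coloring of the contracted pair lifts back. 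In both cases a potential computation (plus Lemma~\ref{lem:Jingweigeneral}) certifies the reduced pair still has potential $>w_{j+1}$, so the minimality hypothesis applies directly, with no degree bound on $v$ and no cascade needed. To rescue your version you would essentially have to discover this toughness-increase trick, which is the heart of the paper's Case 1.
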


\begin{proof}
Suppose there exists  $e\in E(G)$ such that  $M(e)$  is odd. For definiteness, suppose $e\in E_G(u,v)$.

{\bf Case 1:} There is  an $e'\in E_G(u,v)-e$ such that $M(e')$ is even. Let $G'=G-e-e'$ and define 
$\t'(x) = \t(x)$ for every $x\in V(G)-\{u,v\}$, and $\t'(x) = \t(x)+1$ when $x\in \{u,v\}$. We claim that
\begin{equation}\label{o27}
\rho(G',\t')\geq 1-j=w_j(0,j).
\end{equation}
Indeed, assume $\rho_{G',{\bf t'}}(S)\leq -j$. By the definition of ${\bf t'}$, $S\cap \{u,v\}\neq \emptyset$, say $u\in S$. If also $v\in S$, then
$|E(G[S])|-|E(G'[S])|=2$, and hence by the definition of potentials, $\rho_{G,\t}(S)=\rho_{G',\t'}(S)\leq -j$, a contradiction. Thus $v\notin S$ and hence
$\rho_{G,\t}(S)=\rho_{G',\t'}(S)+1\leq -j+1$.
Then by Lemma~\ref{lem:Jingweigeneral}, $S=\{u\}$ and $\t(u)=j$. But in this case, 
$$\rho_{G,\t}(\{u,v\})=\rho_{G,\t}(u)+\rho_{G,\t}(v)-|E_G(u,v)|\leq (1-j)+1-2=-j,$$
a contradiction. This proves~\eqref{o27}.

 Let $\HH'$ be the cover graph on $G'$ obtained from $\HH$ by deleting 
 $M(e)$ and $M(e')$. By the minimality of $(G,\t)$, $\HH'$ has a $(0,j,\t')$-coloring $\sigma$. For $x\in \{u,v\}$, since $\t'(x)\geq 1$, we have
 $\sigma(x) = r(x)$ 
 and $d_{\HH'_\sigma}(x)\leq j-\t(x)-1$. Since only one edge in $M_\HH(e)\cup M_\HH(e')$ connects $r(u)$ with $r(v)$, $d_{\HH_\sigma}(x)\leq j-\t(x)$, and hence
  $\sigma$ is  a $(0,j,\t)$-coloring on $\HH$, a contradiction.

{\bf Case 2:} For every $e'\in E_G(u,v)$, $M(e')$ is  odd.
Form $G''$ from $G$ by deleting all edges between $u$ and $v$ and gluing $u$ and $v$ into a new vertex $v^*$.
 Let $\t''(x) = \t(x)$ for every $x\in V(G'')- v^*$, and $\t''(v^*) = \max\{\t(u),\t(v)\}$. Since 
 $$\rho_{G'',\t''}(\{v^*\})=1-\max\{\t(u),\t(v)\}\geq (1-\t(u))+(1-\t(v))-|E_G(u,v)|= \rho_{G,\t}(\{u,v\}),$$
  we get $\rho(G'',\t'')\geq \rho(G,\t)\geq 1-j$. 
 Let $ \HH''$ be the cover graph on $G''$ obtained from $\HH$ by deleting the edges between $L(u)$ and $L(v)$ and by gluing $r(u)$ with $r(v)$ into the new vertex $r(v^*)$ and gluing $p(u)$ with $p(v)$ into the new vertex $p(v^*)$.
 By the minimality of $(G,\t)$, $\HH''$ has a $(0,j,\t'')$-coloring $\psi$. 
 Then the map $\phi$, where $\phi(x) = \psi(x)$ for every $x\in V(G)- u-v$ and $\phi(u) = \phi(v)=\psi(v^*)$, is a $(0,j,\t)$-coloring of $\HH$, a contradiction.
\end{proof}

\begin{lemma}\label{0j:neighbor}
For every $0$-tough $v\in V(G)$, $|N_G(v)|\geq 3$. 
\end{lemma}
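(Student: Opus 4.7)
The plan is to suppose for contradiction that some $0$-tough vertex $v$ satisfies $|N_G(v)|\le 2$. Lemma \ref{degree} gives $d_G(v)\ge 2$, and Lemma \ref{evenmatching} makes every matching on an edge incident to $v$ even; hence $p(v)$ is adjacent in $\HH$ only to poor copies of $N_G(v)$, and $r(v)$ only to rich copies. This already settles the subcase $|N_G(v)|=1$ with single neighbor $u$ (multiplicity $\ge 2$): criticality produces a $(0,j,\t)$-coloring $\phi$ of $\HH-L(v)$, and I would extend it by picking $\phi(v)$ to be whichever of $p(v),r(v)$ is non-adjacent to $\phi(u)$ in $\HH$. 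This yields an $(0,j,\t)$-coloring of $\HH$, contradicting the choice of $\Cov{H}$.

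The substantive case is $|N_G(v)|=2$ with neighbors $u_1,u_2$ of multiplicities $m_1,m_2\ge 1$ and $m_1+m_2\ge 2$. A naive extension can fail on a mixed pattern $(\phi(u_1),\phi(u_2))=(p,r)$: setting $\phi(v)=p(v)$ is blocked by the $p(v)p(u_1)$-edges, while $\phi(v)=r(v)$ may overload $r(u_2)$. To rule out such mixed patterns in advance, I would form $G':=(G-v)+u_1u_2$ with $\t':=\t|_{V(G)-v}$ and introduce an auxiliary cover $\HH'$ of $G'$ obtained from $\HH$ by deleting $L(v)$, keeping the remaining matchings, and adjoining the \emph{odd} matching $M':=\{p(u_1)r(u_2),\,r(u_1)p(u_2)\}$ on the new edge $u_1u_2$. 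The role of $M'$ is to forbid mixed configurations at $u_1,u_2$ in any valid $\HH'$-coloring: an $M'$-edge in $\HH'_{\phi'}$ would give a poor vertex among $\{p(u_1),p(u_2)\}$ degree at least $1$, exceeding the allowed $-\t(u_i)\le 0$.

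I would then verify $\rho(G',\t')>-j=w_{j+1}(0,j)$ so as to apply the minimality of the counterexample $(G,\t)$. For $S\subseteq V(G')$ with $\{u_1,u_2\}\not\subseteq S$ the new edge plays no role and $\rho_{G',\t'}(S)=\rho_{G,\t}(S)\ge -j+1$; for $\{u_1,u_2\}\subseteq S$ a short computation gives
\[
\rho_{G',\t'}(S)=\rho_{G,\t}(S\cup\{v\})+m_1+m_2-2.
\]
Since $|S\cup\{v\}|\ge 3$, the set $S\cup\{v\}$ is not a single $j$-tough vertex, so either Lemma \ref{lem:Jingweigeneral} gives $\rho_{G,\t}(S\cup\{v\})\ge -j+2$ (when $S\cup\{v\}\subsetneq V(G)$), or the global bound $\rho(G,\t)\ge -j+1$ applies (when $S\cup\{v\}=V(G)$); in both cases $\rho_{G',\t'}(S)\ge -j+1$. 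Combined with $|V(G')|+|E(G')|<|V(G)|+|E(G)|$, the minimality of $(G,\t)$ forces $(G',\t')$ to be $(0,j,\t')$-colorable: otherwise it would contain a critical subpair of potential $\le -j$, contradicting the bound just shown. Let $\phi'$ be an $(0,j,\t')$-coloring of $\HH'$.

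To conclude, I would transfer $\phi'$ back. The odd-matching argument forces $(\phi'(u_1),\phi'(u_2))$ into Case PP (both poor) or Case RR (both rich). Define $\phi$ on $V(G)-v$ by $\phi'$, and set $\phi(v):=r(v)$ in Case PP and $\phi(v):=p(v)$ in Case RR. By evenness of the matchings on $vu_1,vu_2$, $\phi(v)$ has no neighbor among $\{\phi(u_1),\phi(u_2)\}$ in $\HH$, and under PP or RR the $M'$-edges contribute $0$ to the degrees of $\phi'(u_1),\phi'(u_2)$ in $\HH'_{\phi'}$. Consequently these degrees transfer unchanged to $\HH_\phi$, so $\phi$ is an $(0,j,\t)$-coloring of $\HH$, contradicting our choice of $\Cov{H}$. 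The main obstacle is precisely this mixed-pattern phenomenon: a direct extension from $G-v$ need not work, and the odd matching $M'$ on the auxiliary edge $u_1u_2$ is the surgical device that eliminates the bad configurations while preserving the potential estimate.
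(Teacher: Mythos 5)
Your proof is correct. The $|N_G(v)|=1$ case matches the paper's Case 1 exactly. For $|N_G(v)|=2$, the paper splits into two subcases depending on whether $u_1$ and $u_2$ are already adjacent in $G$: when they are not adjacent, the paper glues $u_1$ and $u_2$ into a single vertex $u^*$ with toughness $\max\{\t(u_1),\t(u_2)\}$; when they are adjacent, the paper adds an extra odd matching between $L(u_1)$ and $L(u_2)$ (so that, combined with the existing even matching, a $K_{2,2}$ forces both to be colored rich). You instead handle both subcases uniformly by adding the auxiliary edge $u_1u_2$ carrying the odd matching $M'$, observing correctly that $M'$ forces $(\phi'(u_1),\phi'(u_2))$ into PP or RR regardless of whether other matchings between $L(u_1)$ and $L(u_2)$ exist, and then coloring $v$ with the opposite parity (legitimate since $v$ is $0$-tough and all matchings at $v$ are even by Lemma~\ref{evenmatching}). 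Your potential calculation $\rho_{G',\t'}(S)=\rho_{G,\t}(S\cup\{v\})+m_1+m_2-2$ is right, and the appeal to Lemma~\ref{lem:Jingweigeneral} (using $|S\cup\{v\}|\geq 3$) together with the global bound $\rho(G,\t)\geq -j+1$ gives $\rho(G',\t')\geq -j+1$. Since $|V(G')|+|E(G')|<|V(G)|+|E(G)|$, minimality yields the coloring of $\HH'$, and the transfer is clean because in both PP and RR the $M'$-edges contribute nothing to the degrees. The net effect is a mild unification: you avoid the gluing construction entirely and achieve the same contradiction via a single auxiliary-matching argument; the paper's gluing route is slightly more economical on the potential side (the glued vertex has potential $\geq$ that of the original triple) but requires the extra case distinction that you eliminate.
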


\begin{proof}
   Suppose $|N_G(v)|\leq 2$ for some $0$-tough $v\in V(G)$.
       
{\bf Case 1:} $ |N_G(v)|=1$, say $N_G(v)=\{u\}$.  Since $(G,\t)$ is $(0,j)$-critical,
       $\HH-L(v)$ has a $(0,j,\t)$-coloring $\sigma$. 
       By Lemma~\ref{evenmatching}, for every $e\in E_G(v,u)$, matching $M(e)$ is even.
 Extend $\sigma$ to $v$ by choosing $\sigma(v)\in L(v)$ not adjacent to $\sigma(u)$.  Then $\sigma$ is a $(0,j,\t)$-coloring on $\HH$, a contradiction. 
    
{\bf Case 2:} $ |N_G(v)|=2$, say $N_G(v)=\{u,w\}$, and $u$ is not adjacent to $w$ in $G$. Let $G'$ be obtained from $G-v$ by gluing $u$ with $w$
 into the new vertex $u^*$. Let $\t'(x) = \t(x)$ for every $x\in V(G')- u^*$, and $\t'(u^*) = \max\{\t(u),\t(w)\}$. Since 
 $$\rho_{G',\t'}(\{u^*\})=1-\max\{\t(u),\t(w)\}\geq (1-\t(u))+(1-\t(w))+(1-\t(v))-2$$
 $$\geq (1-\t(u))+(1-\t(w))+(1-\t(v))-
 |E_G(v,\{u,w\})|= \rho_{G,\t}(\{v,u,w\}),$$
 we get $\rho(G',\t')\geq \rho(G,\t)\geq 1-j$. 
 Let $\HH'$ be the cover graph of $G'$ obtained from $\HH-L(v)$  by gluing $r(u)$ with $r(w)$ into the new vertex $r(u^*)$ and gluing $p(u)$ with $p(w)$ into the new vertex $p(u^*)$.
 By the  minimality of $G$, $\HH'$ has a $(0,j,\t')$-coloring $\psi$. 
 Define $\phi(x) = \psi(x)$ for every $x\in V(G)- u-w-v$,  $\phi(u) = \phi(w)=\psi(u^*)$, and choose $\phi(v)\in L(v)$ not adjacent to $\phi(u)$.
By Lemma~\ref{evenmatching}, $\phi$  is a $(0,j,\t)$-coloring of $\HH$, a contradiction.

{\bf Case 3:} $ |N_G(v)|=2$, say $N_G(v)=\{u,w\}$, and $u$ is  adjacent to $w$ in $G$, say $e\in E_G(u,w)$.  Let $G''$ be obtained from $G-v$ by adding an extra edge $e'$ connecting $u$ and $w$. Let $\t''(x) = \t(x)$ for every $x\in V(G'')$. Suppose $\rho_{G'',\t''}(S)\leq -j$ for some $S\subset V(G'')$. 
Since $\t''(x) = \t(x)$ for every $x\in V(G'')$, $u,w\in S$ and $\rho_{G,\t}(S)\leq \rho_{G',\t'}(S)+1\leq 1-j$. Then 
$$\rho_{G,\t}(S\cup \{v\})\leq \rho_{G,\t}(S)+\rho_{G,\t}(v)-|E_G(v,S)|\leq (1-j)+1-2=-j,$$
a contradiction. Thus
 $\rho(G'',\t'')\geq 1-j$. 
Let $ \HH''$ be the cover graph on $G''$ obtained from $\HH-L(v)$   by adding an odd matching connecting $L(u)$ and $L(w)$. 
 By the  minimality of $(G,\t)$, $\HH''$ has a $(0,j,\t'')$-coloring $\psi$. Since $\HH''$ has both, odd and even, matchings connecting $L(u)$ and $L(w)$,
 $\psi(u)=r(u)$ and $\psi(w)=r(w)$. Then by choosing $\psi(v)=p(v)$ we get a $(0,j)$-coloring on $\HH$, a contradiction.
\end{proof}

 If $d(v)\leq j-{\bf t}(v)$ for each $v\in V(G)$, then we color each $v\in V(G)$ with $r(v)$ and
 obtain a $(0,j,\t)$-coloring of $G$. Thus  there is a vertex $v_0\in V(G)$ such that 
 \begin{equation}\label{o272}
 d(v_0)\geq j+1-{\bf t}(v_0).
 \end{equation}

\medskip

By (\ref{eq:PS}), every edge $e$ contributes potential $-b_{0,j}=-1$ to the potential of a subset $S$ containing the ends of $e$. 
 We will view this as if each edge $e$ has charge  $ch(e)=-1$ and each vertex $v$ has charge $ch(v)=1-\t(v)$. By the choice of $G$,
 $\sum_{x\in V(G)\cup E(G)}ch(x)\geq 1-j$. We will use discharging to show that this is not the case.
The discharging rules are as follows.

 {\bf (R1)} Every edge incident to $v_0$ gives  charge  $-1$ to $v_0$.

 {\bf (R2)} Every edge not incident to $v_0$ gives  charge  $-1/2$ to each of its ends.
 


Denote the new charge of a vertex $v \in V(G)$ by $\mu(v)$. Note that after discharging, every edge has charge 0.
 So 
$$\rho(G,\t)= \sum_{v \in V(G)}\mu(v).$$

\medskip

By~\eqref{o272} and (R1),
$$\mu(v_0)=\rho_{G,\t}(v_0)-d(v_0) \leq (1-{\bf t}(v_0))-(j-{\bf t}(v_0)+1)=-j.$$
If $v\neq v_0$ is $0$-tough, then by
Lemma~\ref{0j:neighbor}, it has at least two neighbors distinct from $v_0$.  So by (R2),
$$\mu(v) \leq \rho_{G,\t}(v)-2 \times 1/2 \leq 1-1=0.$$
Finally, if $v$ is  $k$-tough for some $k\geq 1$, then 
$\mu(v)=\rho_{G,\t}(v)=1-k\leq 0.$
Therefore $$\rho(G,\t)= \sum_{v \in V(G)}\mu(v) \leq \mu(v_0)=-j,$$
 a contradiction. This proves the proposition.


\section{Proof of Theorem \ref{Potential form} for  $i\geq1$ and $j\geq 2i+1$}\label{sec:ijlarge}

In this section we prove Part 2 of Theorem \ref{Potential form}:
\begin{prop}\label{prop:ijlarge}
Let $i\geq1$ and $j\geq 2i+1$ be integers, and let $(G,\t)$ be an $(i,j)$-critical pair. Then $\rho(G,\t)\leq2i-j$.
\end{prop}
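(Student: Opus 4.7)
The plan is to mirror the proof of Proposition \ref{prop:0k} in Section \ref{sec:0k}: assume the proposition fails and pick a counterexample $(G,\t)$ minimizing $|V(G)|+|E(G)|$, then fix a cover $\Cov{H}=(L,\HH)$ of $G$ that admits no $(i,j,\t)$-coloring. Since $a_{i,j}=2i+1$ and $b_{i,j}=i+1$, one has $w_{j+1}(i,j)=2i-j$, so $\rho(G,\t)>2i-j$. Lemma \ref{lem:Jingweigeneral} then says every proper nonempty $S\subsetneq V(G)$ with $\rho_{G,\t}(S)\leq w_j=2i-j+1$ consists of a single $j$-tough vertex, while Lemma \ref{degree} forces $\t(v)\geq i+1$ whenever $d(v)=1$.

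The next step is a battery of reducibility lemmas analogous to Lemma \ref{evenmatching} and Lemma \ref{0j:neighbor}. The template is: given a small local configuration in $G$ (a misaligned matching $M(e)$, a pair of parallel edges with matchings of mixed parities, a degree-$2$ vertex, or a low-toughness vertex with few neighbors), build a smaller pair $(G',\t')$ by deleting edges, contracting a short path, or suppressing a low-degree vertex, where $\t'$ is tuned so that $\rho(G',\t')\geq \rho(G,\t)>w_{j+1}$. Minimality then yields an $(i,j,\t')$-coloring of the corresponding cover of $G'$, which lifts back and extends to an $(i,j,\t)$-coloring of $\HH$, a contradiction. Verifying the potential inequality $\rho(G',\t')>w_{j+1}$ is exactly where Lemma \ref{lem:Jingweigeneral} does its work: any would-be witness $S'\subseteq V(G')$ with $\rho_{G',\t'}(S')\leq w_{j+1}$ pulls back to a subset of $V(G)$ that either is forbidden by Lemma \ref{lem:Jingweigeneral} or violates the assumption $\rho(G,\t)>w_{j+1}$.

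Once the structure is pinned down, producing in particular a minimum-degree bound of the form $(i+1)\,d(v)\geq (2i+1)-\t(v)+1$ for every non-sink $v$ together with a sink vertex $v_0$ whose degree exceeds this bound by a controlled margin (its existence forced by non-colorability of $\HH$ via a greedy extension in the spirit of Lemma \ref{L11inHendrey}), I would close with discharging. Assign $ch(v)=\rho_{G,\t}(v)=2i+1-\t(v)$ to each vertex and $ch(e)=-(i+1)$ to each edge, so that $\sum_x ch(x)=\rho(G,\t)$. The rules send each edge incident to $v_0$ its full $-(i+1)$ charge to $v_0$, and split $-(i+1)/2$ between the endpoints of every other edge (with mild refinements depending on endpoint toughness). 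Using the structural bounds, $\mu(v)\leq 0$ for every $v\neq v_0$ and $\mu(v_0)\leq 2i-j$, yielding $\rho(G,\t)=\sum_v \mu(v)\leq 2i-j$, the desired contradiction.

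The main obstacle lies in the reducibility step. With $i\geq 1$ the coloring carries two tiers of defect budget ($i-\t(v)$ at poor vertices, $j-\t(v)$ at rich vertices), so the reducible configurations and the correct increments of $\t'$ become considerably subtler than in the $i=0$ case; in particular, changes in matching parity and in multiplicity must be absorbed into $\t'$ in finely calibrated ways. Each reduction has to be tuned so that both (a) $\rho(G',\t')$ stays strictly above $w_{j+1}$, and (b) any $(i,j,\t')$-coloring of the reduced instance extends, along the chosen pullback, to an $(i,j,\t)$-coloring of $\HH$ that respects the tighter defect constraints created by bumping $\t'$. Designing these reductions consistently so that a clean global minimum-degree statement survives, and matching the discharging rules to that statement, is where most of the effort goes; the final arithmetic in the discharging phase is then routine.
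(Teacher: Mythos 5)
Your plan models the proof on the $(0,j)$ case (Section~\ref{sec:0k}), but for $j\geq 2i+1$ the paper uses a much shorter argument with no discharging at all, and you have missed the key fact that makes it work: here $w_j(i,j)=2i+1-j\leq 0$, so two $j$-tough vertices would already form a set of potential $\leq w_j$, forbidden by Lemma~\ref{lem:Jingweigeneral}. Combined with a single edge-deletion reduction (delete an edge $xy$ with neither end $j$-tough and increment both toughnesses; the resulting potential stays above $w_{j+1}$ because $2-(i+1)\leq 0$ once $i\geq 1$), this shows every edge is incident to the unique $j$-tough vertex $v_0$, so $G$ is a star with no multiple edges; Lemma~\ref{degree} gives $\t(v)\geq i+1$ for each leaf $v$, and then $\rho(\{v_0,v\})\leq (2i+1-j)+i-(i+1)=2i-j$ gives the contradiction on a two-vertex set. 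No ``battery of reducibility lemmas,'' degree bound, or discharging is needed.

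Beyond not noticing this shortcut, your sketch as written has a genuine gap. You never carry out the reducibility step (you flag it as ``the main obstacle'' but leave it), and the discharging rule you outline does not give the bound you claim: a leaf $v$ adjacent only to $v_0$ receives no charge under your rule, so $\mu(v)=\rho_{G,\t}(v)=2i+1-\t(v)$, which for $\t(v)=i+1$ equals $i>0$, contradicting your assertion that $\mu(v)\leq 0$ for all $v\neq v_0$. The discharging total still works out if one also uses that $G$ is a star with the stated toughnesses, but at that point one might as well just compute $\rho(\{v_0,v\})$ directly as the paper does. So your route, even if patched, reduces to the paper's two-vertex computation dressed up in unnecessary machinery, and in its current form both the structural lemmas and the final charge bound are missing or incorrect.
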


Recall that  in this case, $a_{i,j}=2i+1$ and $b_{i,j}=i+1$. By (\ref{eq:rho}), $w_k=2i+1-k$ for every $k\in \{0,1,\dots,j+1\}$. 
 Suppose  the proposition does not hold, and  $(G,{\bf t})$ is a
 $(i,j)$-critical pair   with potential larger than $w_{j+1}(i,j)=2i-j$  with the minimum $|V(G)|+|E(G)|$.
 Let $\Cov{H} = (L,\HH)$ be a cover of $G$ such that $\HH$ does not have an $(i,j,\t)$-coloring. 


\begin{lemma}\label{LM-J-1}
$G$ contains at most one $j$-tough vertex.
\end{lemma}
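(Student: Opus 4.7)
My plan is to assume for contradiction that $(G,\t)$ has two distinct $j$-tough vertices $u,v$, and then exhibit a subset of $V(G)$ whose potential violates the standing assumption that $\rho(G,\t) > w_{j+1}(i,j)$.

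The main computation is to evaluate $\rho_{G,\t}(\{u,v\})$. Each $j$-tough vertex contributes potential $w_j(i,j) = 2i+1-j$, and each parallel edge in $E_G(u,v)$ subtracts $b_{i,j}=i+1$, so with $m = |E_G(u,v)|$ one has
\[
\rho_{G,\t}(\{u,v\}) = (4i+2 - 2j) - (i+1)m.
\]

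I split into cases by whether $u,v$ are adjacent and whether $j$ sits at the boundary value $2i+1$. When $m \geq 1$, since $j \geq 2i+1 \geq i+1$, a routine estimate gives $\rho_{G,\t}(\{u,v\}) \leq 3i+1-2j \leq 2i-j = w_{j+1}(i,j)$, contradicting $\rho(G,\t) > w_{j+1}$. When $m = 0$ and $j \geq 2i+2$, the inequality $\rho_{G,\t}(\{u,v\}) = 4i+2-2j \leq 2i-j = w_{j+1}$ closes the case directly.

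The main obstacle is the remaining boundary case $m=0$ with $j = 2i+1$, in which $\rho_{G,\t}(\{u,v\}) = 0 = w_j(i,j)$ does not beat $w_{j+1} = -1$. Here the idea is to invoke Lemma \ref{lem:Jingweigeneral}: if $\{u,v\}$ were a proper subset of $V(G)$, that lemma would force $\{u,v\}$ to consist of a single $j$-tough vertex, absurd since $|\{u,v\}| = 2$. So $V(G) = \{u,v\}$ with no edges, but then for every $2$-fold cover $\Cov{H} = (L,\HH)$ of $G$ the assignment $\phi(u) = r(u)$, $\phi(v) = r(v)$ is an $(i,j,\t)$-coloring (both selected vertices have degree $0 = j - j$ in $\HH_\phi$, and no poor-degree constraint is active), contradicting that $(G,\t)$ is $(i,j)$-critical.
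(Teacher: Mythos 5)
Your proof is correct and is in essence the same argument as the paper's: bound $\rho_{G,\t}(\{u,v\})$ and derive a contradiction. The paper closes all non-degenerate cases in one stroke by noting $\rho_{G,\t}(\{u,v\}) \le 2w_j(i,j) \le w_j(i,j)$ (since $w_j = 2i+1-j \le 0$, doubling only decreases it) and then contradicting Lemma~\ref{lem:Jingweigeneral}; you instead split on $m = |E_G(u,v)|$ and on whether $j = 2i+1$, which works but is more laborious. Where you genuinely add value is in explicitly treating the degenerate case $\{u,v\} = V(G)$, for which Lemma~\ref{lem:Jingweigeneral} is silent (it requires $S \subsetneq V(G)$). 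The paper leaves this implicit, relying on the standard fact that an $(i,j)$-critical pair must be connected (else it splits into two colorable proper subgraphs whose colorings combine), which forbids an edgeless $\{u,v\}$, and the edged case is already absorbed by the potential estimate. Your direct coloring argument $\phi(u)=r(u)$, $\phi(v)=r(v)$ reaches the same conclusion without invoking connectedness.
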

\begin{proof}
If $u$ and $v$ are two $j$-tough vertices, 
then, since $2i+1-j\leq 0$,
 $$\rho_{G,\t}(\{u,v\})\leq \rho_{G,\t}(u)+\rho_{G,\t}(v)=2w_j(i,j)=2(2i+1-j)\leq 2i+1-j,$$ 
contradicting Lemma~\ref{lem:Jingweigeneral}. 
\end{proof}

\begin{lemma}\label{LM2}
Every edge in $G$ is incident to a $j$-tough vertex.
\end{lemma}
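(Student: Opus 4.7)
The plan is to assume for contradiction that some edge $e\in E_G(u,v)$ has neither end $j$-tough, i.e.\ $\t(u),\t(v)\le j-1$. Construct $(G',\t')$ by deleting $e$ from $G$ and setting $\t'(u)=\t(u)+1$, $\t'(v)=\t(v)+1$, and $\t'(x)=\t(x)$ elsewhere. Since $\t(u),\t(v)\le j-1$, we have $\t'(u),\t'(v)\le j$, so $\t'$ is a legitimate toughness function on $G'$.

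The first step is to verify $\rho(G',\t')>w_{j+1}(i,j)=2i-j$. Using $\rho_{G,\t}(w)=2i+1-\t(w)$ in this regime, a direct computation gives, for each $S\subseteq V(G')$, that $\rho_{G',\t'}(S)=\rho_{G,\t}(S)$ if $S\cap\{u,v\}=\emptyset$; $\rho_{G',\t'}(S)=\rho_{G,\t}(S)-1$ if exactly one of $u,v$ lies in $S$; and $\rho_{G',\t'}(S)=\rho_{G,\t}(S)+(i-1)$ if both $u,v\in S$ (the extra $+(i+1)$ coming from the edge that is now absent from $G'$). In the first case the potential exceeds $w_{j+1}$ by hypothesis; in the third case $\rho_{G',\t'}(S)\ge w_{j+1}+i>w_{j+1}$ since $i\ge 1$. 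In the second case, if $\rho_{G',\t'}(S)\le w_{j+1}$, then $\rho_{G,\t}(S)\le w_j$, and since $S$ is nonempty and properly contained in $V(G)$ (because $|S\cap\{u,v\}|=1<2$), Lemma~\ref{lem:Jingweigeneral} forces $S$ to consist of a single $j$-tough vertex, contradicting $\t(u),\t(v)\le j-1$.

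Next, by minimality of $(G,\t)$ among counterexamples (with respect to $|V|+|E|$), the strictly smaller pair $(G',\t')$ cannot itself be a counterexample; combined with $\rho(G',\t')>w_{j+1}$, this forces $(G',\t')$ to be $(i,j,\t')$-colorable. Indeed, any $(i,j)$-critical subpair of $(G',\t')$ would be even smaller and thus, by minimality, have potential $\le w_{j+1}$, which transfers upward to $\rho(G',\t')\le w_{j+1}$ because removing edges from a graph on a fixed vertex set only raises the potential of each subset. Apply this colorability to the cover $\HH':=\HH$ with $M(e)$ deleted, viewed as a cover of $G'$, to obtain an $(i,j,\t')$-coloring $\sigma$ of $\HH'$.

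The final step is to check that $\sigma$ is itself an $(i,j,\t)$-coloring of $\HH$, contradicting that $(G,\t)$ is $(i,j)$-critical. Passing from $\HH'_\sigma$ to $\HH_\sigma$ inserts at most one new edge, which must be incident to both $\sigma(u)$ and $\sigma(v)$; hence the degrees in $\HH_\sigma$ differ from those in $\HH'_\sigma$ only at these two vertices, and by at most $1$. Since $\sigma$ respects the tighter budget $i-\t'(u)=i-\t(u)-1$ in the poor case and $j-\t'(u)=j-\t(u)-1$ in the rich case at $u$ (and similarly at $v$), the extra edge keeps $\sigma$ within the $(i,j,\t)$-constraints. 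The main obstacle is the middle case $|S\cap\{u,v\}|=1$ in the potential analysis: we lose exactly one unit there and must squeeze just enough from Lemma~\ref{lem:Jingweigeneral} to forbid the tight threshold.
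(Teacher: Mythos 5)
Your proposal is correct and follows essentially the same approach as the paper: delete $e$, bump the toughness of both endpoints by one, verify via Lemma~\ref{lem:Jingweigeneral} and the case analysis on $|S\cap\{u,v\}|$ that the new pair still has potential above $w_{j+1}$, invoke minimality to get a coloring of $\HH'$, and observe that restoring $M(e)$ raises only the degrees of $\sigma(u),\sigma(v)$ by at most one, which the tightened budgets absorb. The paper's proof is more terse (it does not spell out the step that criticality of a smaller subpair would force $\rho(G',\t')\le w_{j+1}$), but the underlying argument is the same.
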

\begin{proof}
Suppose there is $e\in E_G(x,y)$ such that neither $x$ nor $y$ is $j$-tough. Let  $G'=G - e$ and  define $ \t'(w) = \t(w)$ for every $w\in V(G')\setminus\{x,y\}$ and $\t'(w) = \t(w)+1$ for  $w\in \{x,y\}$. Let $\HH'$ be formed from $\HH$ by deleting $M(e)$. Then $\Cov{H}' = (L,\HH')$ is a cover on $G'$.

Suppose there is $S\subseteq V(G')$ such that $\rho_{G', \t'}(S)\leq 2i-j$. 
Then $S\cap \{x,y\}\neq \varnothing$, say $x\in S$. If also $y\in S$, then  $|E(G[S])|=1+|E(G'[S])|$ and hence
$$\rho_{G,\t}(S) \leq\rho_{G',\t'}(S)+2-(i+1) \leq 2i-j+2-(i+1)\leq 2i-j,$$
a contradiction. So let  $y \notin S$.  Then $\rho_{G,\t}(S) \leq \rho_{G',\t'}(S) + 1 \leq 2i-j+1$. So by Lemma~\ref{lem:Jingweigeneral},
 $S=\{x\}$ and $x$ is $j$-tough,
   a contradiction. Hence  $\rho(G',\t') \geq 2i-j+1$. By the minimality of $G$, $\HH'$ has an $(i,j,\t')$-coloring $\phi$. Since adding $M(e)$ back to
   $\HH'$ may increase in $\HH'_\phi$ only the degrees of $\phi(x)$ and $\phi(y)$ and only by at most $1$, $\phi$ is also an 
   $(i,j,\t)$-coloring on $\HH$, a contradiction.
\end{proof}

Lemmas~\ref{LM-J-1} and~\ref{LM2} together imply that $G$ has
 a $j$-tough vertex $v_0$ such that each edge of $G$ is incident with $v_0$. If for some $v\in V(G)-v_0$, $|E_G(v,v_0)|\geq 2$, then
  $$\rho_{G,\t}(\{v,v_0\}) \leq w_j+(2i+1)-2(i+1)=2i-j=w_{j+1}.$$ So $G$ has no multiple edges, and $G$ is a star.
  
Again, let $v\in V(G)-v_0$.  
  By Lemma~\ref{degree}, $\t(v)\geq i+1$.  Hence 
  $$\rho_{G,\t}(\{v,v_0\}) \leq w_{i+1}+w_j-(i+1)=(2i+1-i-1)+(2i+1-j)-(i+1)=2i-j,$$ a contradiction.  This  proves Proposition~\ref{prop:ijlarge}.

\section{Proof of Theorem \ref{Potential form} for  $i\geq1$ and $i+2\leq j\leq 2i$}\label{sec:ijsmall}

In this section we prove Part 3 of Theorem \ref{Potential form}:
\begin{prop}\label{prop:ijsmall}
Let $i\geq1$ and  $i+2\leq j\leq 2i$ be integers, and let $(G,\t)$ be an $(i,j)$-critical pair. Then $\rho(G,\t)\leq-2$.
\end{prop}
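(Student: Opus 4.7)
The proof mirrors that of Proposition~\ref{prop:ijlarge}, after updating the parameters to $a_{i,j}=2j$ and $b_{i,j}=j+1$, so that $w_k=2j-2k$; in particular $w_j=0$ and $w_{j+1}=-2$. The plan is to assume a minimum $(i,j)$-critical counterexample $(G,\t)$ with $\rho(G,\t)>-2$ (equivalently, $\rho(G,\t)\geq -1$) and to force $G$ to collapse to a trivial structure, contradicting criticality.

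First I would show $G$ contains at most one $j$-tough vertex: two distinct such vertices $u,v$ give $\rho_{G,\t}(\{u,v\})\leq 0=w_j$, violating Lemma~\ref{lem:Jingweigeneral} unless $\{u,v\}=V(G)$, in which case $\rho(V(G))=-(j+1)|E(G)|$ is either $0$ (so $G$ is trivially colorable and hence not critical) or at most $-(j+1)\leq -4$. Next I would prove every edge has a $j$-tough endpoint by a standard edge-removal: if $e\in E_G(x,y)$ with $\t(x),\t(y)\leq j-1$, form $(G',\t')$ by deleting $e$ and raising $\t$ at $x$ and $y$ by $1$. The identity $\rho_{G,\t}(S)=\rho_{G',\t'}(S)+2|S\cap\{x,y\}|-(j+1)\cdot\mathbf{1}[\{x,y\}\subseteq S]$, together with $j\geq i+2\geq 3$ and Lemma~\ref{lem:Jingweigeneral} (which rules out $|S\cap\{x,y\}|=1$ by forcing the intersection vertex to be $j$-tough, contradicting $\t(x),\t(y)\leq j-1$), yields $\rho(G',\t')\geq -1$. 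Minimality then produces an $(i,j,\t')$-coloring of $(G',\t')$, which extends to $(G,\t)$: reinstating the matching of $e$ raises the degrees of $\phi(x)$ and $\phi(y)$ by at most one each, absorbed by the additional toughness.

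These two lemmas force $G$ to be a multi-star centered at the unique $j$-tough vertex $v_0$, with $V(G)=\{v_0\}$ as the only degenerate case. For any leaf $v_i$ joined to $v_0$ by $m$ parallel edges, I would compute $\rho_{G,\t}(\{v_0,v_i\})=2j-2\t(v_i)-m(j+1)$; for $m\geq 2$ this is at most $-2-2\t(v_i)\leq -2$, and for $m=1$ Lemma~\ref{degree} forces $\t(v_i)\geq i+1$, yielding $\rho_{G,\t}(\{v_0,v_i\})\leq j-2i-3\leq -3$ since $j\leq 2i$; either conclusion contradicts $\rho(G,\t)>-2$. Finally, if $V(G)=\{v_0\}$, then $\t(v_0)\leq j$ makes $G$ trivially colorable (not critical) and $\t(v_0)=j+1$ gives $\rho(G,\t)=-2$, again a contradiction. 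The main obstacle is the edge-removal step, where $j\geq 3$ and Lemma~\ref{lem:Jingweigeneral} both enter in an essential way; the remaining steps parallel the proof of Proposition~\ref{prop:ijlarge} with the new parameters plugged in.
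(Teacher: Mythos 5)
Your proposal is correct and takes essentially the same approach as the paper's proof: you establish the same two lemmas (at most one $j$-tough vertex, every edge incident to a $j$-tough vertex) as transplants of Lemmas~\ref{LM-J-1} and~\ref{LM2} with the parameters $a_{i,j}=2j$, $b_{i,j}=j+1$, and then compute the potential of the resulting multi-star centered at $v_0$ to finish. You are in fact somewhat more careful than the paper in handling the degenerate cases $V(G)=\{u,v\}$ and $V(G)=\{v_0\}$, which the paper treats implicitly, but the core argument is identical.
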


The proof is very similar to the proof of Proposition~\ref{prop:ijlarge}. 
In this case, $a_{i,j}=2j$, $b_{i,j}=j+1$, and $w_k=2j-2k$ for all $k$. In particular, $w_j=0$. 
Suppose  the proposition does not hold, and  $(G,{\bf t})$ is a
 $(i,j)$-critical pair   with potential larger than $w_{j+1}(i,j)=-2$  with the minimum $|V(G)|+|E(G)|$.
Let $\Cov{H} = (L,\HH)$ be a cover of $G$ such that $\HH$ does not have an $(i,j,\t)$-coloring.

Since $w_j=0$, the following lemmas have the same statements and practically the same simple proofs as Lemmas~\ref{LM-J-1} and~\ref{LM2} (so, we omit the proofs).

\begin{lemma}\label{LM-Ma-1}
$G$ contains at most one $j$-tough vertex.
\end{lemma}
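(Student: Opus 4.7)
The proof will mirror that of Lemma \ref{LM-J-1}, exploiting the fact that in the current regime $w_j(i,j) = 2j - 2j = 0$, which makes the potential arithmetic even cleaner than in the previous case. Suppose for contradiction that $G$ contains two distinct $j$-tough vertices $u$ and $v$. Since each contributes potential $w_j = 0$, we have
\[
\rho_{G,\t}(\{u,v\}) \leq w_j + w_j - b_{i,j}\,|E_G(u,v)| = -(j+1)\,|E_G(u,v)| \leq 0 = w_j.
\]

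First assume $\{u,v\} \subsetneq V(G)$. Then Lemma \ref{lem:Jingweigeneral} applies to $S = \{u,v\}$, forcing $S$ to consist of a single $j$-tough vertex; this contradicts $|S| = 2$.

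It remains to rule out the boundary case $V(G) = \{u,v\}$. If $E_G(u,v) = \varnothing$, then colouring each vertex $w \in \{u,v\}$ with $r(w)$ gives degree $0 \leq j - \t(w)$ in any cover, so $G$ would be $(i,j,\t)$-colourable, contradicting $(i,j)$-criticality. Hence $|E_G(u,v)| \geq 1$, and using $j \geq i+2 \geq 3$,
\[
\rho_{G,\t}(V(G)) \leq -(j+1) \leq -4 < -2 = w_{j+1}(i,j),
\]
contradicting the standing assumption that $\rho(G,\t) > w_{j+1}(i,j)$. I expect no real obstacle here: the only subtlety is the degenerate case $V(G) = \{u,v\}$, which Lemma \ref{lem:Jingweigeneral} does not handle directly since it requires a proper subset, but the bound on $\rho(G,\t)$ disposes of it immediately.
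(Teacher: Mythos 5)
Your proof is correct and follows the same approach the paper intends: the paper explicitly omits the proof of this lemma, stating it has ``practically the same simple proof'' as Lemma~\ref{LM-J-1}, and your argument mirrors that proof exactly, using $w_j = 0$ together with Lemma~\ref{lem:Jingweigeneral}. The only difference is that you also address the boundary case $V(G) = \{u,v\}$, which Lemma~\ref{lem:Jingweigeneral} does not cover since it requires a proper subset; the paper's proof of Lemma~\ref{LM-J-1} (and hence the omitted analogue here) glosses over this degenerate case, so your extra care with the edgeless and multi-edge subcases is a small but legitimate tightening of the argument rather than a different route.
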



\begin{lemma}\label{LM-Ma-2} Every edge in $G$ is incident to a $j$-tough vertex.
\end{lemma}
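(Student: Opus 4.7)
The plan is to follow the proof template of Lemma~\ref{LM2} with the current parameters ($a_{i,j}=2j$, $b_{i,j}=j+1$), exploiting the fact that $j\geq i+2\geq 3$ in the one inequality where the bookkeeping is tight. Suppose for contradiction that there exists $e\in E_G(x,y)$ with neither $x$ nor $y$ being $j$-tough. Let $G':=G-e$ and define $\t'(w)=\t(w)$ for $w\notin\{x,y\}$ and $\t'(w)=\t(w)+1$ for $w\in\{x,y\}$; this is a valid toughness function since $\t(x),\t(y)\leq j-1$. Let $\HH'$ be obtained from $\HH$ by deleting the matching $M(e)$, giving a cover of $G'$.

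The key step is to show $\rho(G',\t')>-2$. Note that each unit increase of toughness decreases vertex potential by $|a_{i,j}-2b_{i,j}|=2$, and each edge contributes $-(j+1)$. I would split into three cases according to how a putative set $S\subseteq V(G')$ with $\rho_{G',\t'}(S)\leq -2$ meets $\{x,y\}$. When $x,y\in S$, the identity $\rho_{G,\t}(S)=\rho_{G',\t'}(S)+4-(j+1)$ yields $\rho_{G,\t}(S)\leq 1-j\leq -2$ (using $j\geq 3$), contradicting $\rho(G,\t)>-2$. When exactly one of $x,y$ lies in $S$, the edge $e$ contributes to neither $E(G[S])$ nor $E(G'[S])$, so $\rho_{G,\t}(S)=\rho_{G',\t'}(S)+2\leq 0=w_j$, and Lemma~\ref{lem:Jingweigeneral} then forces $S$ to consist of a single $j$-tough vertex, contradicting the non-$j$-toughness of the representative of $\{x,y\}$ that lies in $S$. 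When $S\cap\{x,y\}=\varnothing$, potentials coincide and $\rho_{G,\t}(S)\leq -2$ directly contradicts $\rho(G,\t)>-2$.

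Once $\rho(G',\t')>-2$ is established, the minimality of $(G,\t)$, combined with the standard observation that any non-$(i,j,\t')$-colorable pair contains an $(i,j)$-critical sub-pair whose potential is no larger, forces $(G',\t')$ to be $(i,j,\t')$-colorable; in particular, the specific cover $\HH'$ admits an $(i,j,\t')$-coloring $\phi$. Viewing $\phi$ as an $\Cov{H}$-map, restoring the matching $M(e)$ increases the degree of each of $\phi(x),\phi(y)$ in the induced subgraph by at most one. Since $\t'(x)=\t(x)+1$ and $\t'(y)=\t(y)+1$, the degree budgets at $x$ and $y$ for an $(i,j,\t)$-coloring are exactly one larger (in both the poor and rich slots) than those for an $(i,j,\t')$-coloring, so $\phi$ is still valid as an $(i,j,\t)$-coloring of $\HH$, contradicting the choice of $\HH$.

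The only point that deserves attention is the parameter arithmetic in the $x,y\in S$ case: the inequality $1-j\leq -2$ closing that subcase is tight and uses $j\geq i+2\geq 3$; the proof would not go through for $j\leq 2$, which is why the split at $j=i+1$ (handled separately later) is forced. Beyond that, the argument is a routine adaptation of the ``localized toughness bump'' trick from Lemma~\ref{LM2}.
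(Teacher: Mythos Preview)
Your proof is correct and follows exactly the template the paper intends: it explicitly says that Lemma~\ref{LM-Ma-2} has ``practically the same simple proof'' as Lemma~\ref{LM2}, and your adaptation of that argument to the parameters $a_{i,j}=2j$, $b_{i,j}=j+1$ (with the observation that the $\{x,y\}\subseteq S$ case closes via $1-j\le -2$ from $j\ge i+2\ge 3$) is precisely what is required. The only cosmetic difference is that you spell out the passage from $\rho(G',\t')>-2$ to colorability via a critical sub-pair, whereas the paper's proof of Lemma~\ref{LM2} invokes minimality in a single sentence.
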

 

As in Section~\ref{sec:ijlarge},
Lemmas~\ref{LM-Ma-1} and \ref{LM-Ma-2} together imply that $G$ has
 a $j$-tough vertex $v_0$ such that each edge of $G$ is incident with $v_0$. If for some $v\in V(G)-v_0$, $|E_G(v,v_0)|\geq 2$, then
  $\rho_{G,\t}(\{v,v_0\}) \leq w_j+2j-2(j+1)i=-2=w_{j+1}$. So $G$ has no multiple edges, hence is a star.
  
Let $v\in V(G)-v_0$.  
  By Lemma~\ref{degree}, $\t(v)\geq i+1$.  Hence 
  $$\rho_{G,\t}(\{v,v_0\}) \leq w_{i+1}+w_j-(j+1)=(2j-2(i+1))+0-(j+1)=j-2i-3.$$ 
 Since $j-2i\leq 0$, this is a contradiction   proving
 Proposition~\ref{prop:ijsmall}.

\section{Proof of the lower bound in Theorem \ref{main} for  $i\geq1$ and $j=i+1$}\label{sec:iiplus1}

In this section, we introduce a more flexible toughness function, and will use it to prove a generalization of the lower bound in Part 4 of Theorem \ref{main}.

\subsection{A more refined model}\label{ssec:ii+1}

We modify the definitions in Section 4 as follows.

For a graph $G$, a {\em toughness function on $G$} is a function ${\bf t}$ mapping each $v\in V(G)$ into a pair $({\bf t}_p(v),{\bf t}_r(v))$, where
${\bf t}_p(v)\in \{0,1,\ldots,i+1\}$ and ${\bf t}_r(v)\in \{0,1,\ldots,j+1\}$.
 A pair $(G,{\bf t})$ where $G$ is a graph and
${\bf t}$ is a toughness function will be called a {\em weighted pair}.

\begin{defn}[An $(i,j, {\bf t})$-coloring]\label{def3t2} 
 {\em
Given a weighted pair $(G,{\bf t})$, and a cover $\Cov{H} = (L, \HH)$  of $G$, an {\em $(i,j, {\bf t})$-coloring} of $\HH$ is
a $\Cov{H}$-map $\phi$
		such that  the degree of every poor vertex $p(v)$ in $\HH_\phi$ is at most $i-{\bf t}_p(v)$ and  the degree of every rich vertex $r(v)$ in $\HH_\phi$ is at most 
		$j-{\bf t}_r(v)$.
		
  A vertex $v \in V(G)$ is $(k_1,k_2)$-{\em tough} in $(G,{\bf t})$ if ${\bf t}(v)=(k_1,k_2)$. }
\end{defn}

\begin{defn}\label{Potential C} 
Given a weighted pair $(G,{\bf t})$ and its cover $\Cov{H}=(L, \HH)$, the potential of a vertex $v \in V(G)$ is defined by 
\begin{equation}\label{112}
\rho_{G,{\bf t}}(v):=\left\{
\begin{aligned}
   &2i^2+4i+1-(i+1){\bf t}_p(v)-i{\bf t}_r(v) &\text{ if }{\bf t}_p(v)-{\bf t}_r(v)\geq0,\\
   &2i^2+4i+1-i{\bf t}_p(v)-(i+1){\bf t}_r(v) &\text{ if }{\bf t}_p(v)-{\bf t}_r(v)<0.
\end{aligned}
\right.
\end{equation}
For a subset $S \subset V(G)$, the potential of $S$ is defined by 
$$\rho_{G,{\bf t}}(S):=\sum_{v \in S}\rho_{G,{\bf t}}(v)-(i^2+3i+1)|E(G[S])|.$$
The potential of $(G,{\bf t})$ is defined by $\rho(G,{\bf t}):=\min_{S\subset V(G)}\rho_{G,{\bf t}}(S)$
\end{defn}

The definition of critical pairs is the same as in Section 4.


The main result of the section is: 
\begin{theorem}\label{prop:ii+1}
Let $i \geq 1$ be an integer. If $(G,{\bf t})$ is $(i,i+1)$-critical, then $\rho(G,{\bf t}) \leq -1.$     
\end{theorem}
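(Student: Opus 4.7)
The plan is to adapt the minimum-counterexample approach of Sections \ref{sec:0k}--\ref{sec:ijsmall} to the refined two-parameter toughness framework of \S\ref{ssec:ii+1}. Suppose for contradiction that an $(i,i+1)$-critical weighted pair $(G,{\bf t})$ exists with $\rho(G,{\bf t})\ge 0$, and choose one minimizing $|V(G)|+|E(G)|$. Fix a cover $\Cov{H}=(L,\HH)$ for which $\HH$ has no $(i,i+1,{\bf t})$-coloring. Write $a:=2i^2+4i+1$ and $b:=i^2+3i+1$, and note the identity $a-b=i(i+1)$ together with the observation that the unique single-vertex toughness profile reaching potential $\le -1$ is $(i+1,i+2)$-toughness, which attains the value $-1$ exactly; every other toughness profile assigns a strictly positive potential to a singleton.

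The first step is to establish an analogue of Lemma \ref{lem:Jingweigeneral}: every proper nonempty $S\subsetneq V(G)$ whose potential drops below an explicit threshold must collapse to a single almost-saturated vertex. The proof follows the same template as in Section \ref{sec:0k}: contract $S$ to a new vertex $v^*$ whose toughness is chosen so that the relevant potential inequality passes to the smaller pair, use $(i,i+1)$-criticality to color $\HH[L(S)]$, then color the contracted pair by minimality and paste the two colorings. The asymmetry built into \eqref{112} means the ``right'' toughness to assign to $v^*$ depends on whether the coloring of $S$ uses $p(u)$ or $r(u)$ at the unique boundary vertex $u\in S$, so two variants of the contraction must both be available and compatible.

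Next I would extract structural consequences. By Lemma \ref{degree} any degree-one vertex forces ${\bf t}_p\ge i+1$; reductions on pairs of parallel edges with prescribed matching parities (mirroring the case analysis underlying Lemma \ref{evenmatching}) bound the multiplicity of each edge in $G$ and constrain the even/odd parity of each matching in $\HH$. Combined with the reduction lemma, these arguments should pin down a very restricted structure for $G$---in the earlier sections this forced $G$ to be a star, and one expects a slightly richer but still rigid configuration here centered on a few high-toughness vertices.

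The proof then closes with a discharging scheme: assign each vertex $v$ charge $\rho_{G,{\bf t}}(v)$ and each edge charge $-b$, then redistribute according to rules indexed by the $({\bf t}_p,{\bf t}_r)$-profile of each endpoint so that every vertex ends with nonnegative charge, contradicting $\rho(G,{\bf t})\ge 0$. The main obstacle, and the reason this regime is genuinely harder than $j\ge 2i+1$ or $i+2\le j\le 2i$, is precisely the case-split in \eqref{112}: bumping ${\bf t}_p$ or ${\bf t}_r$ during a reduction can push a vertex across the boundary ${\bf t}_p={\bf t}_r$, swapping the coefficients $i$ and $i+1$ and hence changing the marginal potential of further adjustments. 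Both the reduction lemma and the discharging step must therefore distinguish several subcases according to the current $({\bf t}_p(v),{\bf t}_r(v))$-profile, and care is needed to ensure that iterated contractions never cycle or degrade the potential bound.
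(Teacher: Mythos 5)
Your plan follows the broad template correctly (minimum counterexample, contraction lemma, structural lemmas, discharging), but it misses several ingredients that the paper's proof crucially depends on, and contains at least one logical reversal.

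\begin{itemize}
\item \textbf{Missing the secondary minimality criterion and Lemma \ref{spl}.} The paper chooses the counterexample not only minimizing $|V(G)|+|E(G)|$ but, among those, \emph{maximizing} $\rho(G,\t)$. This secondary criterion is used in Lemma \ref{spl} to rule out vertices with $\t_r(v)\ge i+2$ via a parity-swap on the cover. Without that lemma, several toughness profiles with $\t_r=i+2$ (e.g.\ $(i,i+2)$, which has potential $i-1\ge 0$, not strictly positive as you assert) survive, and your threshold arithmetic breaks. Your claim that ``every other toughness profile assigns a strictly positive potential to a singleton'' is false already for $i=1$: $(1,3)$-toughness has potential~$0$.

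\item \textbf{Wrong reduction toolkit.} You propose matching-parity reductions ``mirroring Lemma \ref{evenmatching},'' but the paper does nothing of the kind for $j=i+1$. Instead it develops a hierarchy of potential thresholds: a tight Lemma \ref{lem:Jingweigeneral} analogue (Lemma \ref{cor:uniqueii+1}, threshold $i$, forcing a lone $(i+1,i+1)$-tough vertex), then the coarser ``low set'' notion (threshold $2i$) and Lemmas \ref{LM-M-2}, \ref{LM-M-22}, \ref{danger}. The two-stage split of a low set into two $(i+1,i+1)$-tough surrogates $x,y$ in Lemma \ref{LM-M-2} is a genuinely new mechanism you do not anticipate. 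Also note the paper's contraction in Lemma \ref{cor:uniqueii+1} avoids the $p$/$r$ ambiguity you worry about entirely by always assigning the contracted vertex $(i+1,i+1)$-toughness, forcing $\psi(v^*)=r(v^*)$ with degree~$0$ regardless of which side of $L(u)$ the coloring of $S$ used.

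\item \textbf{No star-like collapse.} You expect $G$ to collapse to a rigid configuration centered on a few high-toughness vertices, as in the $j\ge 2i+1$ and $i+2\le j\le 2i$ cases. That is not what happens here. The paper instead bounds the potential of vertices of degree $1,2,3$ (Lemmas \ref{CLM-M-1}, \ref{d2}, \ref{CLM-M-2}), discharges, and finds that the extremal case is a \emph{cycle} with every vertex $(i,0)$-tough, which it then colors directly.

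\item \textbf{Discharging direction reversed.} You want to redistribute so every vertex has \emph{nonnegative} charge, ``contradicting $\rho(G,\t)\ge 0$.'' But the total charge equals $\rho_{G,\t}(V(G))\ge 0$, so nonnegative vertex charges are entirely consistent with the hypothesis. The paper does the opposite: it shows every vertex ends with \emph{nonpositive} charge, hence $\rho(G,\t)\le 0$, and analyzes the tight case $\rho=0$ separately.
\end{itemize}

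The overall skeleton is right, but as written the proposal would not succeed: the asymmetric potential formula forces the precise sequence of lemmas the paper uses, and those are not recoverable from the earlier sections by analogy.
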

Observe that if we take ${\bf t}(v)=(0, 0)$ for every $v \in G$, then Theorem \ref{prop:ii+1} yields the lower bound in Part 4 of Theorem \ref{main}.


\subsection{Lemmas for the proof of Theorem~\ref{prop:ii+1}}\label{ssec:ii+12}
Suppose the theorem does not hold. Then we can choose an $(i,i+1)$-critical pair
 $(G,{\bf t})$ with $\rho(G,{\bf t}) \geq 0$ that has 
 minimum possible $|V(G)|+|E(G)|$ and modulo this --- the maximum $\rho(G,{\bf t})$.
 
 We start from a useful observation.
 
 \begin{lemma}\label{spl}
Pair  $(G,{\bf t})$
has no vertices $v$ with $\t_r(v)\geq i+2$. 
\end{lemma}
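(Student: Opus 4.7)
My plan is to argue by contradiction using the extremal choice of $(G,\t)$. Suppose some $v\in V(G)$ has $\t_r(v)\geq i+2$. Because $j-\t_r(v)=(i+1)-\t_r(v)<0$, the parenthetical in Definition~\ref{def3t2} forbids $\phi(v)=r(v)$ in every $(i,i+1,\t)$-coloring of every cover; consequently, the precise value of $\t_r(v)$ above $i+1$ affects only the potential and not which maps are admissible.

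I would first reduce to the case $\t_r(v)=i+2$. Let $\t'$ agree with $\t$ except that $\t'_r(v)=i+2$. The $(i,i+1,\t)$- and $(i,i+1,\t')$-coloring classes of every cover of $G$ coincide, so $(G,\t')$ is still $(i,i+1)$-critical on the same number of vertices and edges. Direct computation from Definition~\ref{Potential C} gives $\rho_{G,\t'}(v)-\rho_{G,\t}(v)=(i+1)(\t_r(v)-(i+2))\geq 0$, hence $\rho(G,\t')\geq\rho(G,\t)\geq 0$, and maximality of $\rho(G,\t)$ forces equality. I may therefore replace $(G,\t)$ with $(G,\t')$ in the extremal selection and assume $\t_r(v)=i+2$.

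Next I would define $\t''$ agreeing with $\t$ everywhere except at $v$, where $\t''_r(v)=i+1$. The two cases of the piecewise formula in Definition~\ref{Potential C} both give $\rho_{G,\t''}(v)=\rho_{G,\t}(v)+(i+1)$, whence $\rho_{G,\t''}(S)=\rho_{G,\t}(S)+(i+1)\cdot\mathbf{1}_{v\in S}$ for every $S\subseteq V(G)$. The goal is to show $(G,\t'')$ is itself $(i,i+1)$-critical, which will give $\rho(G,\t'')\geq\rho(G,\t)+(i+1)>\rho(G,\t)$ whenever the minimum of $\rho_{G,\t}$ is attained on some set containing $v$, contradicting the maximality of $\rho(G,\t)$. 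Criticality has two parts: every proper subgraph is still colorable because $\t''$ only weakens $\t$ at $v$; and $G$ itself remains non-colorable because on a cover $\HH^*$ witnessing non-$(i,i+1,\t)$-colorability of $G$, any $(i,i+1,\t'')$-coloring $\phi$ of $\HH^*$ with $\phi(v)=p(v)$ is automatically a $(i,i+1,\t)$-coloring (identical poor constraint at $v$), forcing $\phi(v)=r(v)$ and $d_{\HH^*_\phi}(r(v))=0$; this pins $\phi(u)$ at each $u\in N_G(v)$ to the unique vertex of $L(u)$ not matched to $r(v)$ through every $e\in E_G(u,v)$, and then flipping $\phi(v)$ to $p(v)$ yields a genuine $(i,i+1,\t)$-coloring of $\HH^*$, again contradicting its badness.

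The main obstacle will be precisely this flip. After swapping $\phi(v)$ to $p(v)$, the degree of $p(v)$ in $\HH^*_\phi$ jumps from $0$ to $d_G(v)$ and each $d_{\HH^*_\phi}(\phi(u))$ for $u\in N_G(v)$ grows by $|E_G(u,v)|$, so I must verify these increments stay within the $\t$-bounds. I expect to exploit the tightness $d_{\HH^*_\phi}(r(v))=0$ together with the rigidity of the forced $\phi(u)$'s; should a direct swap fail at some neighbor $u$ whose incident edges $E_G(u,v)$ have mixed matching parities, the two forcing conditions on $\phi(u)$ are already inconsistent and no such $\phi$ can exist, giving non-colorability by this parity-conflict shortcut. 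A careful case analysis of the matching parities across the neighborhood of $v$ is the crux of the argument.
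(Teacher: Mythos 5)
Your strategy diverges from the paper's at the crucial step, and the divergence introduces a genuine gap.

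The paper does not keep the cover fixed. It sets $\t'(v)=(i+1,\t_p(v))$ — which \emph{swaps} the roles of poor and rich at $v$, forcing $\phi(v)=r(v)$ in the new system — and simultaneously \emph{switches the parity of every matching} $M_{\HH_1}(e)$ at $v$. After this double swap, the map $\phi\mapsto\psi$ with $\psi(v)\in L(v)\setminus\{\phi(v)\}$ and $\psi\equiv\phi$ elsewhere carries each $(i,i+1,\t)$-coloring of $\HH_1$ to an $(i,i+1,\t')$-coloring of $\HH'_1$ with \emph{identical} degrees at every image vertex, and vice versa. It is this exact correspondence that transfers non-colorability of $\HH_1$ to non-colorability of $\HH'_1$ and hence criticality of $(G,\t)$ to criticality of $(G,\t')$.

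Your plan instead fixes the cover $\HH^*$ and only lowers $\t_r(v)$ from $i+2$ to $i+1$; you then need to show $\HH^*$ still has no $\t''$-coloring. The flip argument fails here. If $\phi$ is a $\t''$-coloring with $\phi(v)=r(v)$ and $d_{\HH^*_\phi}(r(v))=0$, then for each $u\in N_G(v)$ and each $e\in E_G(u,v)$ the vertex $\phi(u)$ is \emph{not} matched to $r(v)$, hence (the matching being perfect) it \emph{is} matched to $p(v)$. Flipping $\phi(v)$ to $p(v)$ therefore yields $d_{\HH^*_\phi}(p(v))=d_G(v)$, not $0$; this typically far exceeds $i-\t_p(v)$, so the flipped map is generically not a $\t$-coloring and no contradiction arises. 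The parity-conflict "shortcut" you mention only rules out $\t''$-colorings when some $u\in N_G(v)$ is joined by edges of mixed parity — that is a special sub-case, and in the generic situation where every $u$ sees $v$ through matchings of a single parity the pinning is consistent and a $\t''$-coloring can exist. You identify this as the crux, but the plan does not contain an idea that closes it; the missing idea is precisely the cover modification (switching matching parities at $v$), which is what makes the correspondence between old and new colorings exact in both directions. Finally, a small point: the first reduction to $\t_r(v)=i+2$ is vacuous, since the definition already restricts $\t_r(v)\le j+1=i+2$.
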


\begin{proof} Suppose $v\in V(G)$ and $\t_r(v)\geq i+2$. Since $\rho_{G,\t}(v)\geq 0$, $\t_p(v)\leq i-1$.
For  every $2$-fold cover $\HH_1$ of $(G,{\bf t})$, in each $(i,i+1,{\bf t})$-coloring $\phi$ of $\HH_1$,
\begin{equation}\label{1121}
\phi(v)=p(v) \;\mbox{ \em and
 $|\bigcup_{u\in V(G)-v}\{e\in E_G(v,u): \; p(v)\phi(u)\in M_{\HH_1}(e)\}|\leq i-\t_p(v)$. }
\end{equation}
Let ${\bf t'}(v)=(i+1,\t_p(v))$ and ${\bf t'}(w)={\bf t}(w)$ for each $w\in V(G)-v$.  
For  each $2$-fold cover $\HH_1$ of $(G,{\bf t})$, 
let $\HH'_1$ be obtained from $\HH_1$ by switching the parities of 
the matchings $M_{\HH_1}(e)$ for all edges $e$ incident with $v$. By~\eqref{1121} and the definitions of ${\bf t'}$ and $\HH'_1$,
any subgraph of $\HH_1$ (including the whole $\HH_1$) has an $(i,i+1,{\bf t})$-coloring if and only if the corresponding subgraph of
$\HH'_1$ has an $(i,i+1,{\bf t'})$-coloring. Thus, since $(G,{\bf t})$ is $(i,i+1)$-critical, the pair $(G,{\bf t'})$ also is $(i,i+1)$-critical.
But  by~\eqref{112}, 
$$\rho_{G,{\bf t'}}(v)-\rho_{G,{\bf t}}(v)=2i^2+4i+1-(i+1)(i+1)-i\t_p(v)-(2i^2+4i+1-i\t_p(v)-(i+1)\t_r(v))$$
$$=(i+1)(\t_r(v)-(i+1))>0,$$
 and hence $\rho(G,{\bf t'})>\rho(G,{\bf t})$, a contradiction to the choice of
$(G,{\bf t})$.
 \end{proof}
 
We now derive an analog of Lemma~\ref{lem:Jingweigeneral} (with almost the same proof):

\begin{lemma}\label{cor:uniqueii+1}
For every $S\subsetneq V(G)$, if $\rho_{G,{\bf t}}(S) \leq i$, then $S$ is  a  $(i+1,i+1)$-tough vertex. In particular,  $\rho_{G,{\bf t}}(S)=i$.
\end{lemma}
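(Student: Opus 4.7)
The plan is to mirror the proof of Lemma~\ref{lem:Jingweigeneral}, with the new contraction vertex $v^*$ chosen to be $(i+1,i+1)$-tough, so that $\rho_{G',\mathbf{t}'}(v^*)=i$ matches exactly the hypothesized upper bound on $\rho_{G,\mathbf{t}}(S)$. Suppose for contradiction that some nonempty $S\subsetneq V(G)$ has $\rho_{G,\mathbf{t}}(S)\leq i$ while $S$ is not a single $(i+1,i+1)$-tough vertex; choose such an $S$ of maximum cardinality. A short case check against Definition~\ref{Potential C}, using Lemma~\ref{spl} to bound $\mathbf{t}_r(v)\leq i+1$, shows that a single vertex $v$ satisfies $\rho_{G,\mathbf{t}}(v)\leq i$ only when $\mathbf{t}(v)=(i+1,i+1)$; hence $|S|\geq 2$. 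Moreover, for any $v\in V(G)\setminus S$ we have $\rho_{G,\mathbf{t}}(S\cup\{v\})\leq i+(2i^2+4i+1)-(i^2+3i+1)|E_G(v,S)|$, which is at most $i$ whenever $|E_G(v,S)|\geq 2$, contradicting maximality; so $|E_G(v,S)|\leq 1$ for every such $v$.

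Next I would contract $S$ to a single new vertex $v^*$ with $\mathbf{t}'(v^*)=(i+1,i+1)$, leaving $\mathbf{t}'$ unchanged on $V(G')\setminus\{v^*\}=V(G)\setminus S$ and putting exactly one edge between $v^*$ and each former neighbor of $S$. The central step is to verify $\rho(G',\mathbf{t}')\geq 0$. If some $S'\subseteq V(G')$ had $\rho_{G',\mathbf{t}'}(S')\leq -1$, then necessarily $v^*\in S'$ (otherwise $S'\subseteq V(G)\setminus S$ and $\rho_{G',\mathbf{t}'}(S')=\rho_{G,\mathbf{t}}(S')\geq\rho(G,\mathbf{t})\geq 0$). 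Writing $S''=(S'\setminus\{v^*\})\cup S$, the bound $|E_G(v,S)|\leq 1$ yields the edge identity $|E(G[S''])|=|E(G[S])|+|E(G'[S'])|$, and a telescoping computation gives
\begin{equation*}
\rho_{G,\mathbf{t}}(S'')=\rho_{G,\mathbf{t}}(S)+\rho_{G',\mathbf{t}'}(S')-\rho_{G',\mathbf{t}'}(v^*)\leq i+(-1)-i=-1,
\end{equation*}
contradicting $\rho(G,\mathbf{t})\geq 0$.

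Since $|S|\geq 2$ we have $|V(G')|+|E(G')|<|V(G)|+|E(G)|$, so any $(i,i+1)$-critical subgraph of $(G',\mathbf{t}')$ would be a strictly smaller counterexample with $\rho\geq 0$ (removing edges only increases each subset's potential, so the $\rho\geq 0$ bound is inherited), contradicting the minimality of $(G,\mathbf{t})$; hence every cover of $G'$ admits an $(i,i+1,\mathbf{t}')$-coloring. Now fix a cover $\HH$ of $G$ with no $(i,i+1,\mathbf{t})$-coloring; by criticality, $\HH[S]$ has an $(i,i+1,\mathbf{t})$-coloring $\phi$. Build the natural cover $\HH'$ of $G'$ exactly as in Lemma~\ref{lem:Jingweigeneral}: outside $S$ keep $\HH$, and for every edge $e=uz$ with $u\in S$, $z\notin S$, attach the edge $v^*z$ by the matching that joins $r(v^*)$ to the neighbor $z_a(e)$ of $\phi(u)$ in $M(e)$ and joins $p(v^*)$ to the other vertex of $L(z)$. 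Any $(i,i+1,\mathbf{t}')$-coloring $\psi$ of $\HH'$ is forced to take $\psi(v^*)=r(v^*)$ with zero $\HH'_\psi$-degree (because $\mathbf{t}'(v^*)=(i+1,i+1)$), so $\psi(z)\neq z_a(e)$ for every surviving edge, and gluing $\phi$ on $S$ with $\psi$ on $V(G)\setminus S$ produces an $(i,i+1,\mathbf{t})$-coloring of $\HH$, the desired contradiction. The main obstacle is the potential-preservation computation in the middle paragraph: the asymmetric definition of $\rho_{G,\mathbf{t}}(v)$ in~\eqref{112} was engineered so that $\rho_{G',\mathbf{t}'}(v^*)=i$ precisely when $\mathbf{t}'(v^*)=(i+1,i+1)$, and the whole argument pivots on that value matching the hypothesized bound on $\rho_{G,\mathbf{t}}(S)$.
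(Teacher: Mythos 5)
Your proposal is correct and follows the same route as the paper: handle the singleton case via~\eqref{112} and Lemma~\ref{spl}, show $|E_G(v,S)|\leq 1$ by potential, contract $S$ to an $(i+1,i+1)$-tough vertex $v^*$, verify $\rho(G',\mathbf{t}')\geq 0$ by a telescoping potential computation, and then lift a coloring of the contracted cover back to $\HH$. The only cosmetic difference is that you spell out the minimality step via a hypothetical critical subgraph of $G'$, whereas the paper invokes minimality directly; the substance is identical.
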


\begin{proof} Let $S$ be a largest proper subset of $V(G)$ with 
  $\rho_{G,{\bf t}}(S)\leq i$. If $S=\{v\}$, then to have $0\leq \rho_{G,{\bf t}}(S)\leq i$, by~\eqref{112}, $v$ is either $(i+1,i+1)$-tough or  $(i,i+2)$-tough.
  But the latter is excluded by Lemma~\ref{spl}, thus our lemma holds in this case. So, suppose $|S|\geq 2$.
  
 If there is $u\in \overline{S}$ such that $|E_G(S,u)|\geq 2$, then $$\rho_{G,{\bf t}}(S\cup \{u\})\leq i + 2i^2+4i+1 - 2(i^2+3i+1) = -i-1,$$ a contradiction. Thus,
 \begin{equation}\label{1122}
\mbox{ \em 
for each $u\in \overline{S}$, \;  $|E_G(S,u)|\leq 1$. }
\end{equation}
 Let $\Cov{H} = (L,\HH)$ be a cover on $G$ such that $\HH$ does not have an $(i,j)$-coloring. 
   Form a pair $(G',{\bf t'})$ from $(G,{\bf t})$ as follows: 
  \\
  (a) Let $V(G') = V(G)\setminus S \cup \{v^*\}$;
\\  
   (b) let ${\bf t'}(w)={\bf t}(w)$ for every $w\in V(G')\setminus\{v^*\}$, and let ${\bf t'}(v^*) = (i+1,i+1)$; 
 \\  
   (c) for each edge $e\in E_G(uw)$ with $u\in S$ and $w\in V(G)\setminus S$, add an edge   between $v^*$ and $w$.

\smallskip   
   If $\rho(G',{\bf t'})\leq -1$, let $S'\subsetneq V(G')$ be a maximal subset with $\rho_{G',{\bf t'}}(S')\leq -1$. By construction of $(G',{\bf t'})$, $v^*\in S'$. Let $S'' = S'\setminus\{v^*\}$. Then
$$\rho_{G,{\bf t}}(S''\cup S) = \rho_{G,{\bf t}}(S) + \rho_{G',{\bf t'}}(S') - \rho_{G',{\bf t'}}(v^*) \leq i + (-1) - i= -1,$$ a contradiction to $\rho(G,{\bf t})>-1$. 
This yields
  $\rho(G',{\bf t'})\geq 0$.

 Since $(G,{\bf t})$ is $(i,i+1)$-critical, $\HH[S]$ has an $(i,j)$-coloring $\phi$. 
 
For every 
$z\in N_{G}(S)$ and its neighbor $u\in S$,
for each $e\in E_G(u,z)$, denote the neighbor of $\phi(u)$ in $M(e)$ by $z_a(e)$, and the other vertex in $L(z)$ by $z_b(e)$. Let $\Cov{H}' = (L', \HH')$ be a cover of $G'$, such that : \\
1) $L'(v^*) = \{p(v^*),r(v^*)\}$; \\
2) for every $z\in N(v^*)$ and every edge $e\in E_G(S,z)$, $p(v^*)$ is adjacent to $z_b(e)$ and $r(v^*)$ is adjacent to $z_a(e)$; \\
3) for every edge $xy\in E(G')$ such that neither  $x$ nor $y$ is equal to $v^*$, $\HH'[\{x,y\}] = \HH[\{x,y\}]$. 

Then by the minimality of $(G,{\bf t})$ and the fact that  $\rho(G',{\bf t'})\geq 0$, $\HH'$ has an $(i,i+1,\t)$-coloring $\psi$.
Since  ${\bf t'}(v^*) = (i+1,i+1)$,  $\psi(v^*) = r(v^*)$ and $r(v^*)$ has degree $0$ in $\HH'$. Now we define an $\Cov{H}$-map  $\sigma$  by $\sigma(z)=\phi(z)$ for every $z\in S$, and $\sigma(z) = \psi(z)$ for every $z\in V(G)\setminus S$. By the construction of $G'$, for every $vu\in E(G)$ such that $v\in S$ and $u\in V(G)\setminus S$, $\sigma(v)$ is not adjacent to $\sigma(u)$. Hence $\sigma$ is an $(i,i+1,{\bf t})$-coloring of $\HH$, a contradiction.
 \end{proof}

\subsection{Low sets and vertices}
The following notion is quite useful. A {\em low set} is a proper subset  $S$ of $V(G)$ with  $\rho_{G,{\bf t}}(S) \leq 2i$.

\begin{lemma}\label{LM-M-2} 
If $S$  is a low set with $|S|\geq 2$ and $|E[S, V(G)\setminus S]| \geq 2$,
 then $|S|=2$, $S$ is independent, and each $x\in S$ is  $(i+1,i+1)$-tough.
\end{lemma}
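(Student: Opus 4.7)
The plan is to prove the characterization by contradiction, assuming $S$ is a low set with $|S|\geq 2$ and $|E[S, V(G)\setminus S]|\geq 2$ that fails the structural conclusion, and derive a contradiction via a contract-and-extend argument in the spirit of Lemma \ref{cor:uniqueii+1}. I would begin with a quick potential computation: if any outside vertex $u$ had $|E_G(u,S)|\geq 2$, then $\rho_{G,\t}(S\cup\{u\}) \leq 2i + (2i^2+4i+1) - 2(i^2+3i+1) = -1$, contradicting the standing assumption $\rho(G,\t)\geq 0$. Hence the $\geq 2$ external edges reach $\geq 2$ distinct outside vertices.

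The core construction contracts $S$ to a new vertex $v^*$ with $\t'(v^*) = (i+1, i+1)$, so $\rho_{G',\t'}(v^*) = i$, and replaces each external edge $xz$ (for $x \in S$) with an edge $v^*z$. The direct identity $\rho_{G',\t'}(\{v^*\}\cup S_2) = \rho_{G,\t}(S\cup S_2) - \rho_{G,\t}(S) + i$, together with Lemma \ref{cor:uniqueii+1} (which gives $\rho_{G,\t}(T) \geq i+1$ for every proper $T\subsetneq V(G)$ with $|T|\geq 2$), shows $\rho_{G',\t'}(\{v^*\}\cup S_2)\geq 1$ whenever $S\cup S_2\neq V(G)$. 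Thus $\rho(G',\t')\geq 0$ unless the minimum is attained at $S_1 = V(G')$. When $\rho(G',\t')\geq 0$, the standard minimality argument (any proper subgraph of $G'$ also has nonnegative potential, so no smaller critical counterexample exists) forces $(G',\t')$ to be $(i,i+1,\t')$-colorable; combining an $(i,i+1,\t)$-coloring $\phi$ of $\HH[S]$ (existing by criticality) with such a $\psi$---where $\t'(v^*) = (i+1,i+1)$ forces $\psi(v^*) = r(v^*)$ with zero degree in $\HH'_\psi$, and hence no external bad edges---extends to an $(i,i+1,\t)$-coloring of $\HH$, contradicting the criticality of $(G,\t)$.

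The main obstacle is the residual case in which $\rho(G',\t') < 0$ is witnessed only by $S_1 = V(G')$, yielding $\rho_{G,\t}(V(G)) \leq \rho_{G,\t}(S) - i - 1 \leq i-1$. To finish I would perform a careful case analysis on the internal structure of $S$, leveraging (i) that every $v \in V(G)$ satisfies $\rho_{G,\t}(v) \geq i$ by Lemma \ref{cor:uniqueii+1}, with equality iff $v$ is $(i+1,i+1)$-tough; (ii) that the presence of an internal edge subtracts $i^2+3i+1$ from $\rho_{G,\t}(S)$, strongly constraining $|S|$ and the vertex toughnesses; and (iii) the key subset bound $\rho_{G,\t}(S\setminus\{x\}) \leq 2i - \rho_{G,\t}(x) + (i^2+3i+1) d_{G[S]}(x)$, which, applied to a non-$(i+1,i+1)$-tough $x$ and combined with the singleton lower bound $\rho \geq i$ from Lemma \ref{cor:uniqueii+1}, forces $d_{G[S]}(x)\geq 1$ and ultimately pins down the edge structure. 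Systematically eliminating (a) the case $|S| \geq 3$, (b) the case where $S$ has any internal edge, and (c) the case of a non-$(i+1,i+1)$-tough vertex in $S$, each time by combining the bound $\rho_{G,\t}(V(G)) \leq \rho_{G,\t}(S) - i - 1$ with a resulting strict improvement of $\rho_{G,\t}(S) < 2i$ to conclude $\rho(G,\t) < 0$, leaves the unique admissible configuration: $|S| = 2$, $S$ independent, both vertices $(i+1,i+1)$-tough, with $\rho_{G,\t}(S) = 2i$.
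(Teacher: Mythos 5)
Your proposal has a genuine gap in the residual case, and the case analysis you outline cannot close it.

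The core issue is that contracting $S$ to a single new vertex $v^*$ with $\t'(v^*)=(i+1,i+1)$ only injects potential $\rho_{G',\t'}(v^*)=i$ into $G'$, whereas the low set $S$ can carry potential as large as $2i$. Your own identity $\rho_{G',\t'}(\{v^*\}\cup S_2) = \rho_{G,\t}(S\cup S_2) - \rho_{G,\t}(S) + i$ shows that when $S_2 = V(G)\setminus S$ one gets $\rho_{G',\t'}(V(G')) = \rho_{G,\t}(V(G)) - \rho_{G,\t}(S) + i$, and this can be as low as $0 - 2i + i = -i$. So $\rho(G',\t')\geq 0$ genuinely fails for a range of configurations (e.g.\ for $i=1$, a set $S=\{x_1,x_2\}$ with one internal edge and $\rho(x_1)+\rho(x_2)=7$, giving $\rho_{G,\t}(S)=2$, together with $\rho_{G,\t}(V(G))=0$). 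Your plan to eliminate this residual case by deriving $\rho(G,\t)<0$ from a ``strict improvement $\rho_{G,\t}(S)<2i$'' cannot succeed: from $\rho_{G,\t}(V(G))\leq \rho_{G,\t}(S)-i-1$, one would need $\rho_{G,\t}(S)\leq i$ to reach a negative potential, but Lemma~\ref{cor:uniqueii+1} already forces $\rho_{G,\t}(S)\geq i+1$ for every proper $S$ with $|S|\geq 2$, so the most you can ever conclude this way is $\rho_{G,\t}(V(G))\geq 0$---no contradiction.

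The paper's proof resolves exactly this problem by replacing $S$ with \emph{two} new $(i+1,i+1)$-tough vertices $x$ and $y$ rather than one, and splitting the $k\geq 2$ boundary edges $e_1,\dots,e_k$ between them ($e_1,\dots,e_{k-1}$ to $x$, and $e_k$ to $y$). This is precisely where the hypothesis $|E[S,V(G)\setminus S]|\geq 2$ enters the argument: it guarantees $y$ is not isolated, so $x,y$ form a legitimate replacement. Together $x$ and $y$ contribute potential $2i$, exactly compensating the loss of $\rho_{G,\t}(S)\leq 2i$; the resulting identity becomes $\rho_{G',\t'}(\{x,y\}\cup S_2) = \rho_{G,\t}(S\cup S_2) - \rho_{G,\t}(S) + 2i \geq 0$ even when $S\cup S_2 = V(G)$, which closes the potential estimate with no residual case. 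The paper also chooses $S$ to be not an independent set of two vertices, which is what makes $|V(G')|+|E(G')| < |V(G)|+|E(G)|$ and validates the minimality step; the excluded case (an independent $S=\{x_1,x_2\}$ with some $x_h$ not $(i+1,i+1)$-tough) falls immediately to Lemma~\ref{cor:uniqueii+1}, since $\rho(x_1)\geq i+1$ then forces $\rho(x_2)\leq i-1<i$. Your opening observation that no outside vertex has two edges into $S$ is correct and matches the paper's implicit use, and your potential bookkeeping is sound; it is the one-vertex contraction that is too weak for this lemma.
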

\begin{proof} Suppose lemma is not true. Choose a largest low set $S$ with $|S|\geq 2$ and $|E[S, V(G)\setminus S]| \geq 2$
that   is not an independent set of two vertices. 

Let $E_G(S,V(G)-S)=\{e_1,\ldots,e_k\}$ where $e_h$ connects $v_h\in S$ with $u_h\in V(G)-S$ for $h=1,\ldots,k$ ( some vertices can coincide).
Under the conditions of the lemma, $k\geq 2$.
Construct $G'$  by adding  to $G-S$ two new vertices $x$ and $y$ and the set of edges $\{e'_1,\ldots,e'_k\}$ where $e_h$ connects $x$ with $u_h$ for
$h=1,\ldots,k-1$ and $e_k$ connects $y$ with $u_k$.

We claim that 
\begin{equation}\label{1124}
|V(G')|+|E(G')|< |V(G)|+|E(G)|. 
\end{equation}
Indeed, $|V(G')|\leq |V(G)|$ since $|S|\geq 2$, and $|E(G')|\leq |E(G)|$ by construction. Moreover, if
we have equalities in both inequalities, then $|S|=2$ and $|E(G[S])|=0$, a contradiction to the choice of $S$.
This proves~\eqref{1124}.

Let ${\bf t'}(u) = (i+1,i+1)$ for $u\in \{x,y\}$ and ${\bf t'}(u) = {\bf t}(u)$ for $u\in V(G)\setminus S$.
 
 If there is $S'\subseteq V(G')$ such that $\rho_{G',{\bf t'}}(S')<0$, then $S'\cap \{x,y\}\neq \emptyset$, say $x\in S'$. If also $y\in S'$, then
 $$\rho_{G,{\bf t}}((S'-\{x,y\})\cup S)\leq (-1)-\rho_{G,{\bf t}}(\{x,y\})+\rho_{G,{\bf t}}(S)\leq -1,$$
 a contradiction. On the other hand, if $x\in S'$ and $y\notin S'$, then
 $\rho_{G,{\bf t}}(G[S'\setminus\{x\}\cup S])<0-i+2i = i$.  Since $| S|\geq 2$, this  contradicts Lemma~\ref{cor:uniqueii+1}. 
 Hence $\rho(G',{\bf t'})\geq 0$. Denote the subgraph of $\HH$ induced by $G[S]$ by $\HH_S$.
 Since $(G,\t)$ is $(i,i+1)$-critical, $\HH_S$ has an $(i,i+1,\t)$-coloring $\phi_1$.

For every $1\leq h\leq k$, denote the neighbor of $\phi_1(v_h)$ in $M(e_h)$ by $a(u_h)$, and the other vertex in $L(u_h)$ by $b(u_h)$.
 Let $\Cov{H}' = (L', \HH')$ be the cover of $G'$, such that : \\
1) for every $1\leq h\leq k-1$, $p(x)$ is adjacent to $b(u_h)$ and $r(x)$ is adjacent to $a(u_h)$; \\
2)  $p(y)$ is adjacent to $b(u_k)$ and $r(y)$ is adjacent to $a(u_k)$; \\
3) $\HH'[L'(V(G')\setminus \{x,y\})] = \HH[L(V(G)\setminus S)]$.

 By~\eqref{1124} and the fact that $\rho(G',{\bf t'})\geq 0$, $\HH'$ has an $(i,i+1,\t)$-coloring $\phi_2$. 
Since  ${\bf t'}(x) = {\bf t'}(y) =(i+1,i+1)$, 
\begin{equation}\label{1125}
 \mbox{\em $\phi_2(x) = r(x)$,  $\phi_2(y) = r(y)$, and each of   $r(x)$ and $r(y)$ has degree $0$ in $\HH'$. }
 \end{equation}
 Define a representative map $\phi$ on $\HH$ by letting $\phi(v) = \phi_1(v)$ for every $v\in S$ and $\phi(v) = \phi_2(v)$ for every $v\in V(G)\setminus S$. 
By~\eqref{1125}, for each $1\leq h\leq k$,   $\phi(v_h)$ is not adjacent to $\phi(u_h)$.
 Thus $\phi$ is an $(i,i+1,{\bf t})$-coloring of $\HH$, a contradiction.
\end{proof}

Similarly to a low set, a {\em low vertex} is a vertex $v$ with $\rho_{G,\t}(v)\leq 2i$.

\begin{lemma}\label{LM-M-22} 
Every low set consists of either one low vertex or two  $(i+1,i+1)$-tough vertices.
\end{lemma}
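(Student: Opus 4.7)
The plan is as follows. The case $|S|=1$ is immediate by definition (then $S$ consists of a single low vertex), so assume $|S|\geq 2$. In view of Lemma~\ref{LM-M-2}, it suffices to show $|E_G(S,V(G)\setminus S)|\geq 2$, so I will rule out the two remaining possibilities $|E_G(S,V(G)\setminus S)|\in\{0,1\}$.

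If $|E_G(S,V(G)\setminus S)|=0$, then for any $2$-fold cover $\Cov{H}=(L,\HH)$ of $G$ without an $(i,i+1,{\bf t})$-coloring, the induced subcovers on the proper subgraphs $G[S]$ and $G[V(G)\setminus S]$ each admit an $(i,i+1,{\bf t})$-coloring by criticality of $(G,{\bf t})$; since $\HH$ has no edges between $L(S)$ and $L(V(G)\setminus S)$, their union is an $(i,i+1,{\bf t})$-coloring of $\HH$, contradicting the assumption on $\HH$.

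If $|E_G(S,V(G)\setminus S)|=1$ with unique crossing edge $e\in E_G(v_1,u_1)$, $v_1\in S$, $u_1\notin S$, I would form $(G',{\bf t'})$ from $(G,{\bf t})$ by deleting $S$ and adjoining a new vertex $x$ with ${\bf t'}(x)=(i+1,i+1)$ joined to $u_1$ by a single edge $e'$; all other toughness values are preserved. Since $|S|\geq 2$, $|V(G')|+|E(G')|<|V(G)|+|E(G)|$. A potential calculation mirroring the one in Lemma~\ref{LM-M-2}, but appealing to Lemma~\ref{cor:uniqueii+1} in place of its $|S|\geq 2$ hypothesis, yields $\rho(G',{\bf t'})\geq 0$: any $S'\subseteq V(G')$ with $\rho_{G',{\bf t'}}(S')\leq -1$ must contain $x$ (otherwise $\rho_{G,{\bf t}}(S')=\rho_{G',{\bf t'}}(S')\leq -1$, contradicting $\rho(G,{\bf t})\geq 0$), and using $\rho_{G',{\bf t'}}(x)=i$ together with the per-edge contribution $-(i^2+3i+1)$, one shows that $(S'\setminus\{x\})\cup S\subsetneq V(G)$ has at least two elements and potential at most $i-1$, contradicting Lemma~\ref{cor:uniqueii+1}.

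Finally, I would fix a $2$-fold cover $\Cov{H}=(L,\HH)$ of $G$ with no $(i,i+1,{\bf t})$-coloring, take an $(i,i+1,{\bf t})$-coloring $\phi_1$ of $\HH[L(S)]$ by criticality, let $\alpha\in L(u_1)$ be the neighbor of $\phi_1(v_1)$ in $M_\HH(e)$ and $\beta$ the other vertex of $L(u_1)$, and construct a cover $\Cov{H}'=(L',\HH')$ of $G'$ that coincides with $\Cov{H}$ on $V(G)\setminus S$, has $L'(x)=\{p(x),r(x)\}$, and realizes $M_{\HH'}(e')$ as $\{r(x)\alpha,\,p(x)\beta\}$. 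By minimality of $(G,{\bf t})$ and $\rho(G',{\bf t'})\geq 0$, $\HH'$ admits an $(i,i+1,{\bf t'})$-coloring $\phi_2$; since ${\bf t'}(x)=(i+1,i+1)$, necessarily $\phi_2(x)=r(x)$ with $\HH'_{\phi_2}$-degree $0$, so $\phi_2(u_1)=\beta\neq\alpha$, and pasting $\phi_1$ with $\phi_2|_{V(G)\setminus S}$ yields an $(i,i+1,{\bf t})$-coloring of $\HH$, the desired contradiction. The main technical obstacle is the potential inequality $\rho(G',{\bf t'})\geq 0$ in the single-crossing-edge case, where one must separately handle whether $u_1\in S'$ in order to correctly account for the contributions of the edge $e$ in $G$ versus the edge $e'$ in $G'$.
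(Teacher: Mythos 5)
Your overall plan (reduce to Lemma~\ref{LM-M-2} by ruling out $|E_G(S,V(G)\setminus S)|\in\{0,1\}$) is sound, and the $|E_G(S,V(G)\setminus S)|=0$ case is handled correctly. However, the potential inequality $\rho(G',{\bf t'})\geq 0$ in your single-crossing-edge case is \emph{false} in general, so the argument breaks down precisely where you flag the ``main technical obstacle.'' The problem: you replace $S$ (with $\rho_{G,{\bf t}}(S)\leq 2i$) by a single new $(i+1,i+1)$-tough vertex $x$ with $\rho_{G',{\bf t'}}(x)=i$ and one new edge to $u_1$, so the pair $(x,e')$ contributes $i-(i^2+3i+1)=-(i+1)^2$ to any $S'\ni x$. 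Tracing your own computation with $u_1\in S'$ gives $\rho_{G,{\bf t}}(S'\setminus\{x\})\leq i^2+2i$ and hence $\rho_{G,{\bf t}}\bigl((S'\setminus\{x\})\cup S\bigr)\leq (i^2+2i)+2i-(i^2+3i+1)=i-1$. This is only a contradiction when $(S'\setminus\{x\})\cup S$ is a \emph{proper} subset of $V(G)$ so that Lemma~\ref{cor:uniqueii+1} applies; you assert ``$\subsetneq V(G)$'' without justification, and when $S'=V(G')$ the set equals $V(G)$, where $\rho\leq i-1$ is perfectly compatible with $\rho(G,{\bf t})\geq 0$ since $i-1\geq 0$. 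In fact one can check directly that $\rho_{G',{\bf t'}}(V(G'))=\rho_{G,{\bf t}}(V(G))-\rho_{G,{\bf t}}(S)+i$, which is only bounded below by $-i$; so your $(G',{\bf t'})$ really can have negative potential. The reason the analogous step works in Lemma~\ref{cor:uniqueii+1} is that there $\rho_{G,{\bf t}}(S)\leq i$, so replacing $S$ by a potential-$i$ vertex loses nothing; here $\rho_{G,{\bf t}}(S)$ can be as large as $2i$, and the replacement ``hides'' up to $i$ units of potential.

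The paper sidesteps this by not introducing a new vertex at all: it sets $G'=G-S$ and raises \emph{both} toughness coordinates of $u_1$ by $1$, which decreases $\rho(u_1)$ by only $2i+1$ rather than $(i+1)^2$. Then any $S'\subseteq V(G')$ with $\rho_{G',{\bf t'}}(S')\leq -1$ must contain $u_1$ and satisfies $\rho_{G,{\bf t}}(S')\leq 2i$, whence $\rho_{G,{\bf t}}(S\cup S')\leq 2i+2i-(i^2+3i+1)=-i^2+i-1\leq -1$, a genuine contradiction with $\rho(G,{\bf t})\geq 0$ even when $S\cup S'=V(G)$. The price the paper pays is that the auxiliary cover no longer has a built-in ``blank'' vertex to paste against, so it must split into two subcases according to whether the endpoint of $e$ in $S$ is $(i+1,i+1)$-tough, adjusting the toughness of that endpoint in $G[S]$ accordingly. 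Your pasting step (forcing $\phi_2(x)=r(x)$ with degree $0$ so that $\phi_2(u_1)=\beta$) is a nice way to avoid that case split, but it only works if the potential bound survives, and as shown it does not.
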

\begin{proof} Suppose there exists a low set $S$ with $|S|\geq 3$.
By Lemma~\ref{LM-M-2}, $|E_G(S,V(G)-S)|=1$. So we may assume $E_G(S,V(G)-S)=E_G(x,y)=\{e\}$ where
$x\in S$ and $y\notin S$.

Let $G'=G-S$ and $\t'$ be defined by $\t'(y)=(\t_p(y)+1,\t_r(y)+1)$ and $\t'(w)=\t(w)$ for all $w\in V(G)-S-y$.
By the definition of $\t'$, if $\rho(G',\t')\leq -1$, then there exists a low set $S'\subset V(G)-S$ with $y\in S'$.
But in this case,  
$\rho_{G,\t}(S\cup S')\leq 2i+2i-(i^2+3i+1)=i-i^2-1\leq -1$, a contradiction to $\rho(G,\t)\geq 0$. Thus
$\rho(G',\t')\geq 0$, and by the minimality of $G$, $G'$
has  an $(i,i+1,{\bf t'})$-coloring $\phi$. Let $a\in \{p,r\}$ be such that $a(x)$ is the neighbor of $\phi(y)$ in $\HH$.

{\bf Case 1:} Vertex $x$ is not $(i+1,i+1)$-tough. Let $\t''$ differ from $\t$ on $S$ only in that $\t''_a(x)=\t_a(x)+1$.
 By Lemma~\ref{cor:uniqueii+1}, $\rho(G[S],\t'')\geq 0$. So by the minimality of $G$, $G[S]$
has  an $(i,i+1,{\bf t''})$-coloring $\psi$. We claim that $\phi \cup \psi$ is an $(i,i+1,{\bf t})$-coloring of $G$.
 Indeed,  if $\psi(x)\neq a(x)$, this is trivial, and if $\psi(x)=a(x)$, this follows from the definitions of $\t'$ and $\t''$.

{\bf Case 2:} Vertex $x$ is  $(i+1,i+1)$-tough. Let $G_3=G[S]$ and $G_4=G-(S-x)$. By the minimality of $G$,
$G_3$ has  an $(i,i+1,{\bf t})$-coloring $\phi_3$ and $G_4$ has  an $(i,i+1,{\bf t})$-coloring $\phi_4$. Since 
$x$ is  $(i+1,i+1)$-tough, $\phi_3(x)=\phi_4(x)=r(x)$, and $r(x)$ has neighbors neither in $H_{\phi_3}$ nor in $H_{\phi_4}$.
But then $\phi_3\cup \phi_4$ is an $(i,i+1,{\bf t})$-coloring of $G$, a contradiction.
\end{proof}

\begin{lemma}\label{danger} 
For every $v\in V(G)$, at most one edge connects $v$ with a low vertex.
\end{lemma}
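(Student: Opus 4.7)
The plan is to argue by contradiction: assume some $v\in V(G)$ is incident to at least two edges whose other ends are low vertices (counted with multiplicity). There are two sub-cases according to whether those ends coincide.

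If $v$ is joined to a single low vertex $u$ by at least two parallel edges, take $S=\{v,u\}$, so $|E(G[S])|\geq 2$ and, using $\rho_{G,\t}(v)\leq 2i^2+4i+1$ and $\rho_{G,\t}(u)\leq 2i$,
\[
\rho_{G,\t}(S)\leq (2i^2+4i+1)+2i-2(i^2+3i+1)=-1.
\]
This already contradicts $\rho(G,\t)\geq 0$, regardless of whether $S$ is a proper subset of $V(G)$ or equals $V(G)$.

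If instead $v$ has two distinct low neighbors $u_1\neq u_2$, set $S=\{v,u_1,u_2\}$; then $|E(G[S])|\geq 2$ and
\[
\rho_{G,\t}(S)\leq (2i^2+4i+1)+2(2i)-2(i^2+3i+1)=2i-1\leq 2i.
\]
When $|V(G)|\geq 4$, $S\subsetneq V(G)$ is a low set of size three, directly contradicting Lemma~\ref{LM-M-22}, which confines low sets to size at most two.

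The main obstacle is the borderline case $|V(G)|=3$, where $S=V(G)$ is not a low set in our definition. Here I would first use $\rho(\{u_1,u_2\})\geq 0$ to rule out any edge between $u_1$ and $u_2$ (any such edge forces the potential to $4i-(i^2+3i+1)<0$) and $\rho(\{v,u_k\})\geq 0$ to force $|E_G(v,u_k)|=1$, so that $G$ is the three-vertex path with single edges $u_1v$ and $vu_2$. The low-vertex constraint then confines $\t(u_k)\in\{(i+1,i+1),(i+1,i),(i,i+1)\}$, and $\rho(V(G))\geq 0$ both excludes both $u_k$ being $(i+1,i+1)$-tough and forces $\t(v)$ to be small. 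A short enumeration over the remaining toughness profiles --- an $(i+1,i+1)$-tough $u_k$ forces $\phi(u_k)=r(u_k)$ and hence pins $\phi(v)$ through the matching $M(e_k)$, while any other low $u_k$ is flexible enough for $\phi(u_k)$ to be chosen so as not to contribute to the degree of $\phi(v)$ --- then produces an explicit $(i,i+1,\t)$-coloring of every $2$-fold cover of $G$, contradicting the criticality of $(G,\t)$.
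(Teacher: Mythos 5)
Your proof is correct, and the opening moves match the paper's: the potential computation ruling out parallel edges to a single low vertex, the appeal to Lemma~\ref{LM-M-22} to reduce to $V(G)=\{v,u_1,u_2\}$ when $u_1\neq u_2$, and the potential bounds pinning down $E(G)$ and the toughness profiles. The way you close the $3$-vertex case is genuinely different. The paper writes down one candidate $\Cov{H}$-map $\phi$ (color $u_2$ with any color of nonnegative capacity, color $v$ to avoid $\phi(u_2)$, color $u_1$ rich), observes that $\phi$ can fail only when $\max\{\t_p(v),\t_r(v)\}\geq i$, combines that with $\max\{\t_p(v),\t_r(v)\}\leq 1$ (forced by $\rho(V(G))\geq 0$) to get $i=1$, and then recomputes $\rho(\{v,u_1,u_2\})\leq -1$; you instead enumerate the few surviving toughness profiles $\t(u_1),\t(u_2)\in\{(i+1,i+1),(i+1,i),(i,i+1)\}$ with at most one $(i+1,i+1)$ and exhibit a coloring in every case. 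Your route is longer to write out but is more self-contained, and it handles the $(i,i+1)$-tough case for $u_k$ head-on (there both $p(u_k)$ and $r(u_k)$ have capacity exactly $0$, so $\phi(u_k)$ is not forced to $r(u_k)$ but must dodge $\phi(v)$). One caveat on your summary sentence: a non-$(i+1,i+1)$-tough low $u_k$ is \emph{not} always ``flexible enough for $\phi(u_k)$ to be chosen so as not to contribute to the degree of $\phi(v)$'' --- when $\t(u_k)=(i+1,i)$ the choice $\phi(u_k)=r(u_k)$ is forced and may well be matched to $\phi(v)$. The save there is that $\phi(v)$ has spare capacity at least $1$ since $\max\{\t_p(v),\t_r(v)\}\leq 1$; your potential bound does deliver this, but the wording of the summary should be adjusted to reflect that it is $v$'s slack, not $u_k$'s flexibility, that rescues that subcase.
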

\begin{proof} Suppose  for $h\in [2]$, $e_h\in E_G(v,u_h)$ and $u_h$ is low (possibly, $u_1=u_2$). Then
$$\rho_{G,\t}(\{v,u_1,u_2\})\leq 2(2i)+(2i^2+4i+1)-2(i^2+3i+1)=2i-1.$$
Then
  by Lemma~\ref{LM-M-22},  $V(G)=\{v,u_1,u_2\}$. Furthermore, in order to have $\rho({G,\t})\geq 0$, we need $E(G)=\{e_1,e_2\}$ and
  $\max\{\rho_{G,\t}(u_1),\rho_{G,\t}(u_2)\}\}>i$, say, $\rho_{G,\t}(u_1)>i$. Then either ${\bf t}_p(u_1)\leq i-1$ or ${\bf t}_r(u_1)\leq i$,
  say ${\bf t}_r(u_1)\leq i$. In this case, we let $\phi(u_1)=r(u_1)$, let $\phi(u_2)$ be any  color $\alpha\in\{p(u_2),r(u_2)\}$ of nonnegative capacity,
  and choose $\phi(v)\in \{p(v),r(v)\}$ not adjacent to $\alpha$. By construction, the only possibility that $\phi$ is not
  an $(i,i+1,{\bf t})$-coloring of $G$ is that $\phi(v)r(u_1)\in E(\HH)$ and either $\phi(v)=p(v)$ and $\t_p(v)\geq i$ or 
 $\phi(v)=r(v)$ and $\t_r(v)\geq i+1$. Since $\rho_{G,\t}(v)\leq 2i^2+4i+1-(i+1)\max\{\t_p(v),\t_r(v)\}$, in order to have $\rho({G,\t})\geq 0$, 
 we need $\max\{\t_p(v),\t_r(v)\}\leq 1$, which yields $i=1$ and so $\rho_{G,\t}(v)\leq 2(1^2)+4(1)+1-(1+1)1=5$.
 Hence $\rho_{G,\t}(\{v,u_1,u_2\})\leq 2(2\times 1)+5-2(1^2+3(1)+1)=9-2(5)=-1$, a contradiction.
\end{proof}

\subsection{Potentials of the vertices of small degree}
\begin{lemma}\label{CLM-M-1} 
If $v \in V(G)$ is a leaf, then $\rho_{G,{\bf t}}(v) \leq 2i$.
\end{lemma}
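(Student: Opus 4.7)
My plan is to argue by contradiction: suppose $\rho_{G,\t}(v) \geq 2i+1$, and show that $(G,\t)$ must then be $(i,i+1,\t)$-colorable, contradicting criticality. Since $v$ is a leaf, this reduces to showing that for any cover $\Cov{H}=(L,\HH)$ of $G$ with no $(i,i+1,\t)$-coloring, any $(i,i+1,\t)$-coloring $\phi$ of the restricted cover $\HH-L(v)$ on $G-v$ (which exists by criticality applied to the proper subgraph $G-v$) extends across the unique edge $e\in E_G(u,v)$, where $u$ denotes the neighbor of $v$.

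The key observation is that the only possible obstruction to extension is the configuration $\t_p(v)=\t_r(v)=i+1$. Indeed, $M(e)$ is a perfect matching between $L(u)$ and $L(v)$, so exactly one of $p(v),r(v)$ is adjacent to $\phi(u)$ in $\HH$. If the non-adjacent vertex is $r(v)$, setting $\phi(v)=r(v)$ gives degree $0$ in $\HH_\phi$, which is always legal since $\t_r(v)\leq i+1$ by Lemma \ref{spl}. Otherwise $r(v)$ is the adjacent one; then $\phi(v)=p(v)$ has degree $0$ (legal whenever $\t_p(v)\leq i$), while $\phi(v)=r(v)$ has degree $1$ (legal whenever $\t_r(v)\leq i$). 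Combined with the ranges $\t_p(v),\t_r(v)\leq i+1$, the extension fails only when $\t_p(v)=i+1$ and $\t_r(v)=i+1$.

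It remains to eliminate this configuration using the potential hypothesis. Plugging $\t_p(v)=\t_r(v)=i+1$ into the first branch of \eqref{112} yields $\rho_{G,\t}(v) = 2i^2+4i+1-(i+1)^2-i(i+1) = i$, which contradicts $\rho_{G,\t}(v)\geq 2i+1$ for every $i\geq 1$. Hence the bad configuration cannot occur, the extension succeeds, and the desired contradiction follows.

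There is no serious obstacle here: the argument is a direct case analysis driven by the parity of $M(e)$, combined with Lemma \ref{spl} and a one-line check against the potential formula. The only care required is verifying that the permitted ranges for $\t_p(v)$ and $\t_r(v)$ (the latter bounded using Lemma \ref{spl}) together leave only the single combination $(i+1,i+1)$ to rule out.
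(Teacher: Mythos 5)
Your argument has a genuine gap: in the case where $r(v)$ is adjacent to $\phi(u)$, you claim that setting $\phi(v)=r(v)$ (giving $r(v)$ degree $1$ in $\HH_\phi$) is ``legal whenever $\t_r(v)\leq i$.'' This checks only the degree of $r(v)$ and ignores the fact that $\phi(u)$ also gains a neighbor, so its degree in $\HH_\phi$ increases by $1$ as well. The coloring $\phi$ of $\HH - L(v)$ gives no slack on $\phi(u)$; it may already have degree exactly $i-\t_p(u)$ or $(i+1)-\t_r(u)$, in which case the extension is illegal. Thus your conclusion that the extension fails only when $\t_p(v)=\t_r(v)=i+1$ is not justified; the failure condition you can actually establish by a bare extension is only $\t_p(v)=i+1$ or $\t_r(v)=i+2$ (choose the color of $v$ not adjacent to $\phi(u)$: this has degree $0$ and leaves $\phi(u)$ unchanged).

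This is exactly why the paper's proof is longer. After ruling out $\t_r(v)=i+2$ via Lemma~\ref{spl} and deducing $\t_p(v)=i+1$, it cannot simply recolor $v$; instead it passes to $G'=G-v$ with a modified toughness $\t'$ that increases the toughness of the vertex $\overline{\beta}(u)\in L(u)$ adjacent to $r(v)$ by $1$, verifies $\rho(G',\t')\geq 0$ using Lemma~\ref{cor:uniqueii+1} and the bound~\eqref{113}, and only then extends the resulting $(i,i+1,\t')$-coloring of $G'$ to $v$ by setting $\phi(v)=r(v)$. The increased toughness on $\overline{\beta}(u)$ is precisely what guarantees the extra edge at $\phi(u)$ stays within bounds. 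Your proposal skips this essential step, so it does not constitute a proof; you would need to reintroduce the toughness modification at $u$ (or an equivalent device) to close the gap.
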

\begin{proof} 
Suppose $\rho_{G,{\bf t}}(v) \geq 2i+1$, and  $N(v)=\{u\}$. Then either ${\bf t}_p(v)=i+1$ or ${\bf t}_r(v)=i+2$, since otherwise we can  extend to $v$
 any $(i,i+1,{\bf t})$-coloring  of $G-v$. Moreover, by Lemma~\ref{spl}, the latter cannot hold. Thus ${\bf t}_p(v)=i+1$ and
 \begin{equation}\label{113}
 \rho_{G,{\bf t}}(v) \leq 2i^2+4i+1-(i+1)(i+1)
  = i^2+2i.
   \end{equation}
 On the other hand, since $\rho_{G,{\bf t}}(v) \geq 2i+1$,
 \begin{equation}\label{1132}
 \mbox{\em  ${\bf t}_r(v)\leq i$. }
 \end{equation}
 Let $\beta(u)\in \{p(u),r(u)\}$ be the neighbor of $p(v)$ in $\HH$ and
 $\overline{\beta}(u)\in L(u)-\beta(u)$. 
Let $G'=G-v$ and let $\t'$ differ from $\t$ on $V(G')$ only for $\t'(u)$, where
the toughness of $\overline{\beta}(u)$ increases by $1$. Since the potential of each subset of $V(G')$ decreases by at most $i+1$, the only possibility that
$\rho({G',\t'})\leq -1$ is that $\rho_{G,\t}(u)\leq i$. But in this case by~\eqref{113},  $\rho_{G,\t}(\{u,v\})\leq i+(i^2+2i)-(i^2+3i+1)=-1$, a contradiction. Thus
$\rho({G',\t'})\geq 0$ and hence by the minimality of $G$, $G'$ has  an $(i,i+1,{\bf t'})$-coloring $\phi$. 
Extend $\phi$ to $v$ by letting $\phi(v)=r(v)$. If
$\phi(u)=\beta(u)$, then we do not create conflicts, and if  $\phi(u)=\overline{\beta}(u)$, then $\phi$ is   an $(i,i+1,{\bf t})$-coloring of $G$ because of~\eqref{1132} and
the definition of $\t'$.
\end{proof}

\begin{lemma}\label{d2} If $v\in V(G)$ and
 $d_G(v)=2$, then $\rho_{G,{\bf t}}(v) \leq i^2+3i+1$.
\end{lemma}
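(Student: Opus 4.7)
I would argue by contradiction: suppose $\rho_{G,\t}(v) \geq i^2+3i+2$ and produce an $(i,i+1,\t)$-coloring of an arbitrary cover of $G$.

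First, I would extract from the hypothesis that the toughness coordinates at $v$ are small. Writing $m_1 = \max\{\t_p(v),\t_r(v)\}$ and $m_2 = \min\{\t_p(v),\t_r(v)\}$, Definition~\ref{Potential C} gives $\rho_{G,\t}(v) = 2i^2+4i+1 - (i+1)m_1 - im_2$. The assumed lower bound rearranges to $(i+1)m_1 + im_2 \leq i^2+i-1$; since $(i+1)i = i^2+i$ already exceeds this, we must have $m_1 \leq i-1$. In particular $\t_r(v) \leq i-1$, so the rich vertex $r(v)$ has spare capacity at least $i+1-\t_r(v) \geq 2$.

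Next, I would fix any cover $\Cov{H}=(L,\HH)$ of $G$ admitting no $(i,i+1,\t)$-coloring. By $(i,i+1)$-criticality of $(G,\t)$, the cover of $G-v$ obtained by deleting $L(v)$ has an $(i,i+1,\t)$-coloring $\phi$. I would then extend $\phi$ by setting $\phi(v) := r(v)$. Each of the two edges of $G$ incident to $v$ contributes exactly one $\HH$-edge at $r(v)$ (namely, the unique matching edge of $M(e)$ meeting $r(v)$), so $d_{\HH}(r(v)) = d_G(v) = 2$, and hence $d_{\HH_\phi}(r(v)) \leq 2 \leq i+1-\t_r(v)$. Thus the extension $\phi$ is an $(i,i+1,\t)$-coloring of $\HH$, contradicting the choice of $\HH$.

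There is no real obstacle here: the argument is much shorter than that for leaves in Lemma~\ref{CLM-M-1}, because the threshold $i^2+3i+2$ is large enough to rule out both $\t_p(v) \geq i$ and $\t_r(v) \geq i$. With $\t_r(v) \leq i-1$, the rich color alone absorbs the full degree of $v$, so one need not modify the toughness on $N(v)$, nor reroute any edges; simply deleting $v$ and coloring $G-v$ suffices. The only thing worth double-checking during writeup is that $\HH - L(v)$ is indeed a valid cover of $G-v$ to which critical colorability applies, which is immediate from Definition~\ref{defn:cover}.
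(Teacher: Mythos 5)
There is a genuine gap. You verify that $d_{\HH_\phi}(r(v)) \leq 2 \leq i+1-\t_r(v)$, which controls the degree constraint at $v$ itself, but you never address the degree constraints at the neighbors of $v$. When you extend $\phi$ to $v$ by setting $\phi(v)=r(v)$, the vertex $r(v)$ may be adjacent in $\HH$ to $\phi(u_1)$ and/or $\phi(u_2)$ (where $u_1,u_2$ are the $G$-neighbors of $v$); each such adjacency raises $d_{\HH_\phi}(\phi(u_h))$ by one. Since $\phi$ was an $(i,i+1,\t)$-coloring of $\HH-L(v)$ with the \emph{unmodified} toughness, $\phi(u_h)$ may already sit at its maximum permitted degree, in which case the extension violates the constraint at $u_h$. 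Nothing in your argument rules this out.

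This is exactly the obstruction the paper's proof is built around. The paper first invokes Lemma~\ref{danger} to pick a neighbor $u_1$ that is not low, then passes to $(G-v,\t')$ where $\t'$ raises both coordinates of the toughness at $u_1$ by one; the non-lowness of $u_1$ (via Lemma~\ref{LM-M-22}) is what guarantees $\rho(G-v,\t')\geq 0$, so the minimality of $(G,\t)$ applies. The resulting coloring $\phi$ of $\HH-L(v)$ therefore has one unit of slack at $\phi(u_1)$. The extension is then chosen as the vertex $\alpha\in L(v)$ \emph{not} adjacent to $\phi(u_2)$ (so $\phi(u_2)$'s degree is unaffected), and the possible extra edge to $\phi(u_1)$ is absorbed by the slack built in via $\t'$. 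Your computation that $\t_p(v),\t_r(v)\leq i-1$ is correct and is used in the paper at the end, but by itself it does not let you drop the toughness modification or the avoidance of $\phi(u_2)$; without those your extension step fails.

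A concrete failure mode: take $\t_p(v)=\t_r(v)=0$ and suppose $\phi(u_1)=p(u_1)$ already has degree exactly $i-\t_p(u_1)$ and $\phi(u_2)=p(u_2)$ already has degree exactly $i-\t_p(u_2)$ in $\HH_\phi$, and that $r(v)$ is adjacent to both $\phi(u_1)$ and $\phi(u_2)$ in $\HH$. Then setting $\phi(v)=r(v)$ pushes both $\phi(u_1)$ and $\phi(u_2)$ past their caps, so the resulting map is not an $(i,i+1,\t)$-coloring even though $r(v)$ itself has degree $2\leq i+1$.
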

\begin{proof}  
Suppose  
 $\rho_{G,{\bf t}}(v) \geq i^2+3i+2$ and $E_G(v,V(G)-v)=\{e_1,e_2\}$ where $e_h=vu_h$ for  $h\in [2]$.  
 Then by~\eqref{112}, $\t_p(v)\leq i-1$ and  $\t_r(v)\leq i-1$. By Lemma~\ref{danger}, we may assume $u_1$ is not low.
 Let $G'=G-v$ and let $\t'$ differ from $\t$ on $V(G')$ only in that $\t'(u_1)=(\t_p(u_1)+1,\t_r(u_1)+1)$.
 We claim that $\rho(G',\t')\geq 0$. Indeed, suppose $\rho_{G',\t'}(S)\leq -1$ for some $S\subseteq V(G')$. By the definition of $\t'$ this 
 implies that $u_1\in S$ and $\rho_{G,\t}(S)\leq (2i+1)+\rho_{G',\t'}(S)\leq 2i$. Since $u_1$ is not low, this contradicts  Lemma~\ref{LM-M-22}.
  
 Thus $\rho(G',\t')\geq 0$, and by the minimality of $G$, $G'$ has an $(i,i+1,{\bf t'})$-coloring $\phi$. Extend $\phi$ to $v$ by letting $\phi(v)$ be the color
 $\alpha\in L(v)$ not adjacent to $\phi(u_2)$. If $\alpha$ is not adjacent to $\phi(u_1)$, then $d_{\HH_\phi}(\alpha)=0$, but even if $\alpha\phi(u_1)\in E(\HH)$,
 then by the choice of $\t'$ and the fact that $\t_p(v)\leq i-1$ and  $\t_r(v)\leq i-1$, $\phi$ is  an $(i,i+1,{\bf t})$-coloring of $G$. 
 \end{proof}

\begin{lemma}\label{CLM-M-2} If $v\in V(G)$ and $d_G(v)=3$, then $\rho_{G,{\bf t}}(v) \leq i^2+4i+2$.
\end{lemma}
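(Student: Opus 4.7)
The plan is to argue by contradiction along the lines of Lemma~\ref{d2}. Assume $d_G(v)=3$ and $\rho_{G,{\bf t}}(v)\geq i^2+4i+3$. From~\eqref{112} this forces $(i+1)\max(\t_p(v),\t_r(v))+i\min(\t_p(v),\t_r(v))\leq i^2-2$, hence $\t_p(v),\t_r(v)\leq i-2$, and in particular $i\geq 2$ (for $i=1$ we would have $\rho(v)\leq 2i^2+4i+1=7=i^2+4i+2$, so the bound is automatic). The smallness $\t_p,\t_r\leq i-2$ is exactly what will power the extension step at the end.

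Next I would form $G':=G-v$ with a modified toughness $\t'$ designed so that $\rho(G',\t')\geq 0$. Lemma~\ref{danger} forces any neighbor of $v$ of multiplicity $\geq 2$ to be non-low, and when $v$ has three distinct neighbors at least two of them are non-low. According to whether $v$ has three distinct neighbors or not, I would either raise the toughness of two non-low single-edge neighbors $u_1,u_2$ each by $(1,1)$, or raise the toughness of a single non-low neighbor $u_1$ with $|E_G(v,u_1)|\geq 2$ by $(2,2)$. In both setups the total drop in $\sum \rho(w)$ equals $4i+2$.

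The technical core is showing $\rho(G',\t')\geq 0$. Suppose some $S\subseteq V(G')$ has $\rho_{G',\t'}(S)\leq -1$. If the gap $\rho_{G,\t}(S)-\rho_{G',\t'}(S)$ is only $2i+1$ (possible only in the first branch, when exactly one of $u_1,u_2$ lies in $S$), then $\rho_{G,\t}(S)\leq 2i$ makes $S$ a low set containing a non-low vertex, contradicting Lemma~\ref{LM-M-22} exactly as in Lemma~\ref{d2}. If instead the gap is the full $4i+2$, then $\rho_{G,\t}(S)\leq 4i+1$ and by construction $|E_G(v,S)|\geq 2$, giving
\[
\rho_{G,\t}(S\cup\{v\})\leq (4i+1)+(2i^2+4i+1)-2(i^2+3i+1)=2i,
\]
with the right-hand side dropping to $-i^2-i-1<0$ if all three edges from $v$ land in $S$. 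In the former subcase the neighbor missed by $v$'s edges lies outside $S\cup\{v\}$, so $S\cup\{v\}\subsetneq V(G)$ is low with at least three elements, violating Lemma~\ref{LM-M-22}; in the latter we either directly contradict $\rho(G,\t)\geq 0$ (if $S\cup\{v\}=V(G)$) or again violate Lemma~\ref{LM-M-22}.

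Granted $\rho(G',\t')\geq 0$, minimality of $(G,\t)$ yields an $(i,i+1,\t')$-coloring $\phi$ of $G'$, and the main remaining (and most delicate) task is to extend $\phi$ across $v$. For each edge $e_h$ incident to $v$, let $a_h\in L(v)$ be the neighbor of $\phi(u_h)$ through $M(e_h)$. In the three-distinct-neighbor case I would set $\alpha$ to be the element of $L(v)\setminus\{a_3\}$, so that the only conflicts at $v$ come from $e_1,e_2$ (at most two) and are absorbed at $\phi(u_1),\phi(u_2)$ using the $(1,1)$ slack. In the multiplicity case I would instead choose $\alpha$ to minimise $|\{h:a_h=\alpha\}|$ (which is at most $1$ when $u_1$ carries all three edges, and at most $2$ when $u_1$ carries two and a distinct $u_2$ carries one, where additionally one picks $\alpha\neq a_3$ so $\phi(u_2)$ receives no new adjacency); the $(2,2)$ slack at $u_1$ absorbs the new adjacencies there. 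In every case the degree of $\alpha$ in the extended map is at most $2\leq \min\{i-\t_p(v),\,i+1-\t_r(v)\}$ thanks to $\t_p(v),\t_r(v)\leq i-2$, and the extended map is an $(i,i+1,\t)$-coloring of $\HH$, contradicting the criticality of $(G,\t)$.
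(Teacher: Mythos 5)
Your proof is correct and follows the same strategy as the paper's: delete $v$, raise the toughness by $(1,1)$ at two non-low neighbors $u_1,u_2$ of $v$ (by $(2,2)$ if they coincide), use Lemma~\ref{LM-M-22} to show $\rho(G',\t')\geq 0$, and then extend a coloring of $G'$ across $v$ while dodging the $M(e_3)$-conflict. In fact you supply slightly more than the printed proof does: the paper's proof of Lemma~\ref{CLM-M-2} stops once it has derived a contradiction from the assumption $\rho_{G',\t'}(S)\leq -1$, i.e.\ after establishing $\rho(G',\t')\geq 0$, and leaves implicit the final step of invoking minimality of $(G,\t)$ to obtain an $(i,i+1,\t')$-coloring of $G'$ and extending it to $v$ — the step that, just as in Lemma~\ref{d2}, actually requires the observation you make explicitly, that $\rho_{G,\t}(v)\geq i^2+4i+3$ forces $i\geq 2$ and $\t_p(v),\t_r(v)\leq i-2$. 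Your case split on whether the three edges at $v$ hit distinct neighbors is equivalent to the paper's ``some $u_h$ can coincide'' bookkeeping, and your verification of $\rho(G',\t')\geq 0$ (one of $u_1,u_2$ in $S$ $\Rightarrow$ a low set containing a non-low vertex; both in $S$ $\Rightarrow$ the bound~\eqref{114}, tightened by $e_3$ when $S\cup\{v\}=V(G)$) matches the argument around~\eqref{114} exactly.
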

\begin{proof} 

Suppose  
 $\rho_{G,{\bf t}}(v) \geq i^2+4i+3$ and $E_G(v,V(G)-v)=\{e_1,e_2,e_3\}$ where $e_h=vu_h$ for $ h\in [3]$ (some $u_h$ can coincide).
 By Lemma~\ref{danger}, we may assume that $u_1$ and $u_2$ are not low.
  Let $G'=G-v$. Define $\t'(x)=\t(x)$ for all $x\in V(G')-\{u_1,u_2\}$
 and $\t'(u_h)=(\t_p(u_h)+1,\t_r(u_h)+1)$ for $h\in [2]$ (if $u_1=u_2$, then $\t'(u_1)=(\t_p(u_1)+2,\t_r(u_1)+2)$).
 
 Suppose there is $S\subseteq V(G')$ with $\rho_{G',\t'}(S)\leq -1$. Since $u_1$ and $u_2$ are not low,
by Lemma~\ref{LM-M-22},  $\{u_1,u_2\}\subseteq S$, and 
 $\rho_{G,\t}(S)=2(2i+1)+\rho_{G',\t'}(S)\leq 4i+1$. Then 
 \begin{equation}\label{114}
\rho_{G,\t}(S\cup \{v\})\leq (4i+1)+\rho_{G,\t}(v)-2(i^2+3i+1)\leq 2i-1.
 \end{equation}
 Since $v$ is not low,  Lemma~\ref{LM-M-22} and~\eqref{114} yield that $S\cup \{v\}=V(G)$.
 But if $S\cup \{v\}=V(G)$, then in~\eqref{114} we did not take $e_3$ into account. So, instead of~\eqref{114}, we have
$$\rho_{G,\t}(S\cup \{v\})\leq (4i+1)+\rho_{G,\t}(v)-3(i^2+3i+1)\leq 
-i^2-i-1, 
 $$
 a contradiction.
\end{proof}

\subsection{Discharging}
 At the start, each vertex $v$ has charge $ch(v)=\rho_{G,{\bf t}}(v)$ and
each edge $e$  has charge $ch(e)=-(i^2+3i+1)$. Then
$$ 
\sum_{x\in V(G)\cup E(G)}ch(x)=\sum_{v\in V(G)}\rho_{G,\t}(v)-(i^2+3i+1)|E(G)|=\rho_{G,{\bf t}}(V(G))\geq 0.$$

In the discharging, every edge $e$ gives charge $-0.5(i^2+3i+1)$ to each of its ends. Denoting the resulting charge 
of an $x\in V(G)\cup E(G)$ by $\mu(x)$, we obtain that $\mu(e)=0$ for each $e\in E(G)$, so that
 \begin{equation}\label{charge}
\sum_{v\in V(G)}\mu(v)=\sum_{x\in V(G)\cup E(G)}\mu(x)=\sum_{x\in V(G)\cup E(G)}ch(x)=\rho_{G,{\bf t}}(V(G))\geq 0.
 \end{equation}
On the other hand, for each $v\in V(G)$, $\mu(v)=\rho_{G,{\bf t}}(v)-d(v)\frac{i^2+3i+1}{2}$, and so

\begin{itemize}
    \item If $d(v) \geq 4$, then $\mu(v) \leq (2i^2+4i+1)-2(i^2+3i+1) < 0$;
    \item If $d(v) = 3$, then by Lemma \ref{CLM-M-2}, $\rho_{G,{\bf t}}(v) \leq i^2+4i+2$, then $\mu(v) \leq (i^2+4i+2)-3(i^2+3i+1)/2 < 0$;
    \item If $d(v)=2$, then by Lemma \ref{d2}, $\mu(v) \leq (i^2+3i+1)-(i^2+3i+1)=0$;
    \item If $d(v)=1$,  then by Lemma \ref{CLM-M-1}, $\mu(v) \leq 2i-(i^2+3i+1)/2 < 0$.
\end{itemize}

By~\eqref{charge}, this implies that $\sum_{v\in V(G)}\mu(v)=\rho(G,{\bf t})\leq 0$
with equality only if each vertex has degree $2$ and potential exactly $i^2+3i+1$. Since $G$ is connected, it must be a cycle.
Furthermore, if $v\in V(G)$ with potential $i^2+3i+1$ has $\t_p(v)\leq i-1$ and  $\t_r(v)\leq i$, then the proof of Lemma~\ref{d2} still goes 
through. Therefore, for each $v\in V(G)$, $\t(v)=(i,0)$. But then we color every $v\in V(G)$ with $r(v)$. Since $i+1\geq 2$, this is 
 an $(i,i+1,{\bf t})$-coloring of $G$. 
This finishes the proof of Theorem~\ref{prop:ii+1}.

\section{ Constructions}\label{sec:construction}
Given a multigraph $G$, for every $v\in V(G)$ we say $v$ is a \emph{$d$-vertex} if $d(v)=d$. 
\begin{defn}[flags]
Given a vertex $v$ in a multigraph $G$, a {\em flag at $v$} is a $2$-cycle $vuv$ such that  $u$ has degree $2$, i.e., there are two edges
connecting $v$ with $u$, and no other edges incident to $u$. See Figure~\ref{fig:flag}, $v$ is the \emph{base vertex} of the flags, and $u_1,\dots,u_k$ are \emph{flag vertices}.
\end{defn}

\begin{defn}[weak flags]
Given a vertex $v$ in a multigraph $G$,  a \emph{weak flag of weight $i$ at $v$} is  subgraph of $G$ with vertex set 
$\{u_1,\ldots,u_{i},x,y\}$ such that $u_1,\ldots,u_{i}$ are flags at $x$ and $y$ is a $2$-vertex adjacent to $x$ and $v$;
 see Figure \ref{fig:weak}.
\end{defn}

\begin{figure}[h]
\centering
\begin{minipage}[t]{0.46\textwidth}
\centering
\includegraphics[width=2.6in]{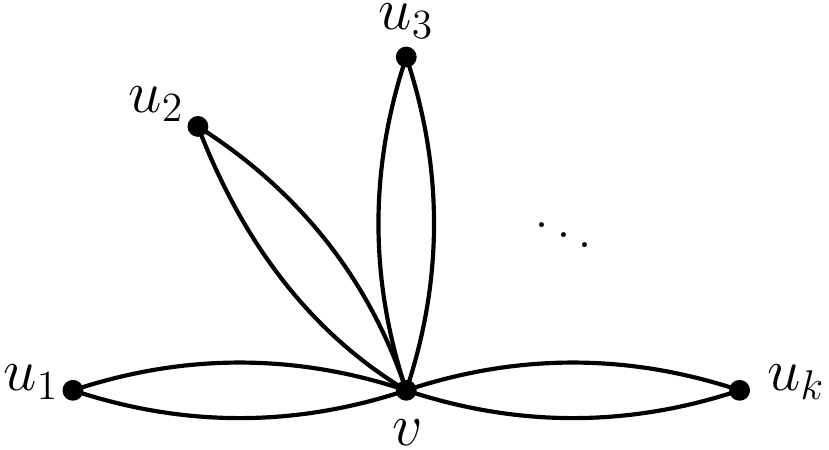}
\captionsetup{justification=centering}
\caption{A vertex $v$ with $k$ flags.}\label{fig:flag}
\end{minipage}\hfill\begin{minipage}[t]{0.54\textwidth}
\centering
\includegraphics[width=1.85in]{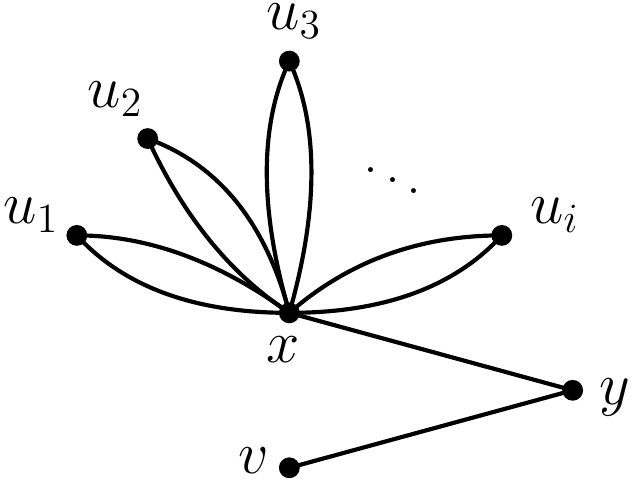}
\captionsetup{justification=centering}
\caption{A vertex $v$ with one weak flag of weight $i$.}\label{fig:weak}
\end{minipage}
\end{figure}



Call a vertex  $v\in V(G)$ a {\em $d^-$-vertex}  if $d_G(v)\leq d$, and a {\em $d^+$-vertex} in $G$ if $d_G(v)\geq d$.

We will use the following simple properties of $(i,j)$-critical graphs.

\begin{lemma}\label{quasi1} Let $0\leq i\leq j$ and $G$ be an $(i,j)$-critical graph. If $v$
 is a  vertex of $G$ with $N(v)=\{u\}$, and 
 $\Cov{H}=(L,\HH)$ is a cover of $G$ that does not have any $(i,j)$-coloring,
then some matching in $L(v)L(u)$ is even and some matching is odd. In particular, $\delta(G)\geq 2$, and for each flag vertex
$v$ with the neighbor $u$, one matching in $L(v)L(u)$ is even and one is odd.
\end{lemma}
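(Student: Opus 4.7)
The plan is to argue by contradiction: if all matchings in $L(v)L(u)$ had the same parity (all even, or all odd), then one could extend a coloring of $G-v$ to $G$, contradicting $(i,j)$-criticality. The three sentences of the statement then fall out in sequence.

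First I would assume, toward contradiction, that all matchings between $L(v)$ and $L(u)$ in $\HH$ share the same parity. Because $G$ is $(i,j)$-critical, the graph $G-v$ admits an $(i,j)$-coloring; pick such a coloring $\phi$ on the cover $\HH - L(v)$ induced on $G-v$. The key observation is this: under the assumption that all matchings $M(e)$ for $e\in E_G(u,v)$ have the same parity, each vertex in $L(u)$ has all of its neighbors in $L(v)$ equal to the same vertex of $L(v)$. So the vertex $\phi(u)$ has at most one neighbor in $L(v)$, and therefore the other vertex $\alpha\in L(v)$ is adjacent to nothing in $\HH_\phi$.

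Now extend $\phi$ to an $\HH$-map $\phi'$ by setting $\phi'(v)=\alpha$. Since $\alpha$ has no neighbor in $\phi(V(G-v))$, the degree of $\alpha$ in $\HH_{\phi'}$ is $0\leq i$, and the degree of $\phi(u)$ is unchanged from $\HH_\phi$ and hence still at most $\max\{i,j\}$ in its role. Thus $\phi'$ is an $(i,j)$-coloring of $\HH$, contradicting the assumption that $\HH$ admits no $(i,j)$-coloring. Hence both parities must occur among the matchings in $L(v)L(u)$.

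For the ``In particular'' clauses: having at least one even and one odd matching requires at least two matchings in $L(v)L(u)$, so the multiplicity of $uv$ in $G$ is at least $2$. Since this forces $d_G(v)\geq 2$ for every vertex $v$ with a unique neighbor, and any degree-$1$ vertex would have a unique neighbor, we conclude $\delta(G)\geq 2$. For a flag vertex $v$ at base $u$, by definition $N(v)=\{u\}$ and the multiplicity of $uv$ is exactly $2$, so among the exactly two matchings between $L(v)$ and $L(u)$, the first part of the lemma forces one to be even and the other odd. No step here is expected to be a real obstacle; the only subtlety is being careful with the ``same parity'' reduction, noting that the degree condition on $\phi(u)$ is automatically preserved because we pick the representative of $v$ disjoint from $\phi(u)$'s neighborhood.
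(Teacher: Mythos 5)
Your proof is correct and follows essentially the same route as the paper: assume all matchings in $L(v)L(u)$ share a parity, take an $(i,j)$-coloring of the cover restricted to $G-v$ (guaranteed by criticality), and extend to $v$ by choosing the vertex of $L(v)$ not adjacent to $\phi(u)$, contradicting the non-colorability of $\HH$. Your treatment of the ``in particular'' clauses matches the intent (and you could note for completeness that isolated vertices are excluded from a critical graph by the same extension argument, so $\delta(G)\geq 2$ is fully justified).
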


\begin{proof} Suppose all the  matchings in $L(v)L(u)$ have the same parity. Let $G'=G-v$ and $\HH'=\HH-L(v)$. By the minimality of $G$, 
$\HH'$ admits an $(i,j)$-coloring $\phi$. Then we can choose $\phi(v)\in L(v)$ not adjacent to $\phi(u)$, a contradiction.
\end{proof}

\begin{lemma}\label{quasi} Let $0\leq i\leq j-1$ and $G$ be an $(i,j)$-critical graph. If $v$ is a vertex of $G$ with at most one  edge connecting $v$ with
a $3^+$-vertex, then $|N_G(v)|\geq i+2$.
\end{lemma}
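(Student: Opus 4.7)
The plan is to argue by contradiction: suppose $|N_G(v)|\le i+1$, fix a cover $\Cov{H}=(L,\HH)$ of $G$ admitting no $(i,j)$-coloring, and construct one. Using $\delta(G)\ge 2$ from Lemma~\ref{quasi1}, I partition $N_G(v)$ into the set $A$ of flag neighbors (both edges to $v$), the set $B$ of non-flag $2$-vertex neighbors (each with an ``outside'' neighbor $x_w\ne v$), and the at-most-one $3^+$-neighbor $C=\{u_1\}$ joined to $v$ by a single edge. With $a=|A|,b=|B|,c=|C|$, the hypothesis gives $a+b+c\le i+1$ and $c\le 1$; crucially, when $c=1$ we automatically have $a+b\le i$. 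The degenerate case $i=0$ reduces quickly to $|N_G(v)|=1$ with a single flag, which is handled directly via Lemma~\ref{quasi1}, so I focus on $i\ge 1$.

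By criticality applied to $G-v$, the restricted cover has an $(i,j)$-coloring $\phi^*$, which I will extend. For each $u\in B\cup C$, let $\alpha_u,\beta_u\in L(u)$ be the vertices matched by $M(vu)$ with $r(v)$ and $p(v)$ respectively. I set $\phi(v)=r(v)$ unless $c=1$ and $\phi^*(u_1)=\alpha_{u_1}$, in which case $\phi(v)=p(v)$. In either case $\phi(v)$ is not adjacent in $\HH$ to $\phi^*(u_1)$, so the single $3^+$-vertex $u_1$ contributes nothing to either degree in $\HH_\phi$. For every flag $u\in A$ I set $\phi(u)=r(u)$; by Lemma~\ref{quasi1} the two matchings between $L(u)$ and $L(v)$ have opposite parities, so exactly one contributes an edge $\phi(u)\phi(v)$ to $\HH_\phi$, giving a contribution of $1$ to $d_{\HH_\phi}(\phi(v))$ and keeping $d_{\HH_\phi}(\phi(u))=1$. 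For each $w\in B$ I start with $\phi(w)=\phi^*(w)$.

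A simple count shows $d_{\HH_\phi}(\phi(v))\le a+b+c\le i+1\le j$ when $\phi(v)=r(v)$ and $d_{\HH_\phi}(\phi(v))\le a+b\le i$ when $\phi(v)=p(v)$, so the bound at $\phi(v)$ is always met; the flag and $u_1$ bounds hold automatically. The only remaining concern is at some $w\in B$ where $\phi^*(w)$ is poor, of degree $1$ in $(\HH-L(v))_{\phi^*}$ (incident to $wx_w$), and matched by $M(vw)$ with $\phi(v)$: the new edge pushes its degree to $2$, violating the bound precisely when $i=1$. In that case I swap $\phi(w)$ to the other element of $L(w)$; the parity of $M(vw)$ guarantees the new $\phi(w)$ is unmatched with $\phi(v)$, and the parity of $M(wx_w)$ guarantees it is unmatched with $\phi^*(x_w)$, so $\phi(w)$ becomes isolated in $\HH_\phi$ and $\phi^*(x_w)$'s degree only drops. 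The argument survives even when two such vertices of $B$ happen to be adjacent and both need swapping, since the only effect of the simultaneous swap is to replace the single $\HH_\phi$-edge between them by another parallel matching edge of the same multiplicity while removing all previously problematic edges.

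The main obstacle is accounting for the $3^+$-vertex $u_1$ without creating a violation either at $u_1$ or at $\phi(v)$: the key observation is that $c=1$ already forces $a+b\le i$, which gives exactly the slack needed to direct $\phi(v)$ away from $\phi^*(u_1)$ while still absorbing all contributions from $A\cup B$ within the strict poor-vertex bound $i$. Combined with the swap at $B$-vertices, this yields an $(i,j)$-coloring of $\HH$, contradicting the assumption that none exists and finishing the proof.
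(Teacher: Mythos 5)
Your proof is correct and follows the same high-level strategy as the paper: delete $v$, color $G-v$ by criticality, then extend to $v$ and repair the degree-$2$ neighbors as needed. The organization differs in a few respects worth noting. The paper first dispatches separately the case where every neighbor of $v$ is a flag vertex (so that $V(G)=N(v)\cup\{v\}$), and otherwise fixes any non-flag neighbor $u_0$ --- not necessarily the $3^+$-vertex --- and directs $\phi(v)$ away from $\phi(u_0)$; this immediately yields $d_{\HH_\phi}(\phi(v))\le s\le i$ regardless of the parity of $\phi(v)$, so it never needs the rich bound $j$ at $v$. It then recolors \emph{every} $2$-vertex neighbor whose image attains degree $2$, uniformly in $i$. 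You instead partition $N(v)$ into flags, non-flag $2$-vertices, and the at-most-one $3^+$-neighbor, single out only that $3^+$-vertex (if present) to avoid, and use the slack $j\ge i+1$ to absorb the extra unit of degree at $\phi(v)$ when $c=0$; you also isolate $i=1$ as the only case requiring a repair. Both routes work. Your remark about simultaneous swaps of adjacent $B$-vertices addresses the same subtlety that the paper's phrase ``the degree of the new color will be $0$'' skates over (it should read ``at most $1$'' when the other endpoint of the single edge is also being recolored, which is still within the poor bound once $i\ge 1$), so your version is if anything slightly more careful on that point.
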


\begin{proof} Suppose $N_G(v)=\{u_0,\ldots,u_s\}$, where  $s\leq i$ and all vertices apart from $u_0$ are $2^-$-vertices. Then by Lemma~\ref{quasi1},
$d_G(u_1)=\ldots=d_G(u_s)=2$.
If all $u_0,\ldots,u_s$ are flag vertices, then $V(G)=\{u_0,\ldots,u_s,v\}$. In this case,
we let $\phi(v)=r(v)$, and for $0\leq h\leq s$, choose $\phi(u_h)=r(u_h)$. By Lemma~\ref{quasi1},
  $r(v)$ has  in $H_\phi$ at most $s+1\leq i+1\leq j$ neighbors, and each $\phi(u_h)$ has at most one neighbor, contradicting the choice of $G$. 
  Thus, not all $u_0,\ldots,u_s$ are flag vertices, and we may assume that $u_0$ is not a flag vertex. Then under the conditions of the lemma,
 \begin{equation}\label{npr}
   \mbox{\em only one edge connects $v$ with $u_0$.
}
     \end{equation} 
     
 Let $\Cov{H}=(L,\HH)$ be a cover of $G$ that does not have any $(i,j)$-coloring.
Since $G$ is $(i,j)$-critical, the cover $\Cov{H}_1=\Cov{H}-L(v)$ of $G_1=G-v$ has an $(i,j)$-coloring $\phi$. 
By~(\ref{npr}), we can choose $\phi(v)\in L(v)$ not adjacent to $\phi(u_0)$.
Then for each $1\leq h\leq s$, we do the following: If $d_{H_\phi}(\phi(u_h))\leq 1$, then leave the color unchanged, and if
$d_{H_\phi}(\phi(u_h))=2$, then
 recolor $u_h$ with the other color, and
the degree of the new color will be $0$. This way, $d_{H_\phi}(\phi(v))\leq s\leq i$ and
$d_{H_\phi}(\phi(u_h))\leq 1\leq i$ for each $1\leq h\leq s$. 
Thus we obtained an $(i,j)$-coloring of $\Cov{H}$, a contradiction to its choice.
\end{proof}

\subsection{ Examples of $(0,j)$-critical graphs.}
We construct  $G_m$ as follows. Start from the cycle $v_0v_1\ldots v_mv_0$, and then for $h=1,\ldots,j$, add a 3-cycle $x_hy_hu_h$ where $u_h$ is adjacent to $v_0$,
see Figure~\ref{fig:0k}. 
By construction, for every integer $m$, $|V(G_m)|=3j+m+1$ and
  $|E(G_m)|=|V(G_m)|+j$. 

\begin{figure}[h]
\centering
\begin{minipage}[t]{0.46\textwidth}
\centering
\includegraphics[width=2.5in]{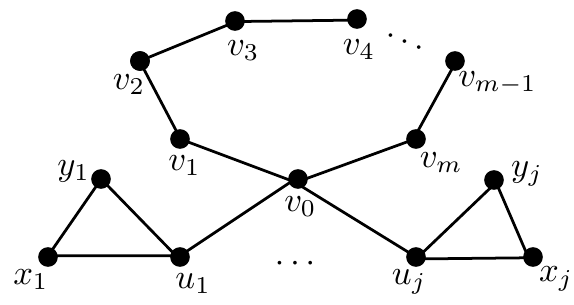}
\captionsetup{justification=centering}
\caption{Critical graphs for $(0,j)$-coloring.}\label{fig:0k}
\end{minipage}\hfill\begin{minipage}[t]{0.54\textwidth}
\centering
\includegraphics[width=3.4in]{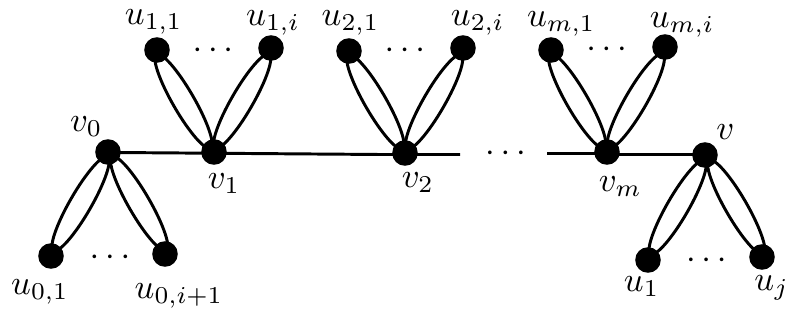}
\captionsetup{justification=centering}
\caption{Critical graphs for $(i,j)$-colorings when $j$ is large.}\label{fig:ijlarge}
\end{minipage}
\end{figure}

\begin{prop}
    $G_m$ is $(0,j)$-critical for every $m$.
\end{prop}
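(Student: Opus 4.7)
We need to show two things: that $G_m$ admits a $2$-fold cover with no $(0,j)$-coloring, and that every proper subgraph of $G_m$ is $(0,j)$-colorable for every cover.

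The second (minimality) part will follow from Part~1 of Corollary~\ref{main_}. Indeed, $G_m$ has cyclomatic number $|E(G_m)| - |V(G_m)| + 1 = j + 1$, with the independent cycles being the main cycle $v_0 v_1 \cdots v_m v_0$ and the $j$ triangles $x_h y_h u_h$. Removing any edge $e$ decreases the cyclomatic number to at most $j$, so every subgraph $H \subseteq G_m - e$ satisfies $|E(H)| \leq |V(H)| + j - 1$; the corollary then gives a $(0,j)$-coloring of $G_m - e$ for every cover, and by heredity every proper subgraph of $G_m$ is $(0,j)$-colorable.

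For the first (non-colorability) part, my plan is to construct an explicit cover $\Cov{H}^\ast = (L, H)$ of $G_m$ by prescribing the matching parity (even or odd) on each edge, and then verify by direct analysis that no $\Cov{H}^\ast$-map is a $(0,j)$-coloring. The matchings will be chosen so that each of the $j+1$ independent cycles contributes an active edge at $\phi(v_0)$ in $H_\phi$: for each triangle $x_h y_h u_h$, the parities of its three edges together with $M(u_h v_0)$ will force at least one active edge incident to $\phi(v_0)$ in $H_\phi$ for any local coloring of the triangle consistent with the $(0,j)$-constraints; similarly, the cycle-edge parities will force at least one of $M(v_0 v_1)$ or $M(v_0 v_m)$ to contribute an active edge at $\phi(v_0)$. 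Adding these contributions gives $\deg_{H_\phi}(\phi(v_0)) \geq j + 1$, which violates the $(0,j)$-bound whether $\phi(v_0) = p(v_0)$ (bound $0$) or $\phi(v_0) = r(v_0)$ (bound $j$).

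The crux of the proof is the explicit choice of $\Cov{H}^\ast$ and the verification that both labellings of $v_0$ lead to contradiction. This reduces to a finite case analysis on matching parities around each triangle and along the main cycle, relying on the tightness $|E(G_m)| = |V(G_m)| + j$: the excess is exactly $j$, so the coloring has no room to ``escape'' the forcing produced by the $j + 1$ independent cycles, and any attempt to recolor a vertex to avoid one active edge at $v_0$ creates a new one elsewhere.
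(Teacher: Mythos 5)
Your argument for the minimality direction is essentially right: no proper subgraph $H$ of $G_m$ can satisfy $|E(H)| - |V(H)| \geq j$, so Corollary~\ref{main_}(1) (equivalently, Proposition~\ref{prop:0k}) shows every proper subgraph is $(0,j)$-colorable. Be careful with the phrasing, though: removing a bridge $u_h v_0$ does \emph{not} decrease the cyclomatic number $|E|-|V|+c$ (it goes from $j+1$ to $(|E|-1)-|V|+2 = j+1$). The correct reason the bound $|E(H)|\leq|V(H)|+j-1$ holds for every proper subgraph $H$ is that equality $|E(H)|-|V(H)|=j$ forces $H$ to be connected with a $(j+1)$-dimensional cycle space, which (given the block structure of $G_m$: the main cycle, $j$ triangles, $j$ bridges) forces $H=G_m$. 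The paper makes this argument via the block decomposition: since every block of $G_m$ is a cycle or a single edge, a proper subgraph either misses a cycle or is disconnected, and in both cases $|E(G')|-|V(G')|\leq j-1$.

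The non-colorability direction, however, is only a plan, and the plan as stated would not work. You aim to build a cover in which \emph{every} one of the $j+1$ independent cycles forces a conflict edge at $\phi(v_0)$ regardless of whether $\phi(v_0)=p(v_0)$ or $\phi(v_0)=r(v_0)$, so that $\deg_{H_\phi}(\phi(v_0))\geq j+1$ in all cases. That is not achievable with a $2$-fold cover: the triangle at $u_h$ (with the right odd/even parities) can force $\phi(u_h)=r(u_h)$, and the single matching $M(u_h v_0)$ connects $r(u_h)$ to exactly one of $p(v_0),r(v_0)$, so the triangle creates a conflict at $\phi(v_0)$ only for one of the two choices of $\phi(v_0)$, not both. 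The paper's actual cover instead exploits the \emph{asymmetry} of the $(0,j)$-bound: the triangle parities force $\phi(u_h)=r(u_h)$, and the even matchings $M(u_h v_0)$ give $j$ conflicts at $\phi(v_0)$ only when $\phi(v_0)=r(v_0)$; the cycle parities (odd along $v_g v_{g+1}$ for $0\leq g\leq m-1$, even on $v_0 v_m$) then force one extra conflict at $r(v_0)$, exceeding the bound $j$. When $\phi(v_0)=p(v_0)$ the triangles contribute nothing, but the same cycle parities propagate along the cycle to give a single conflict at $p(v_0)$, which already exceeds the bound $0$. Your proposal needs an explicit cover and a two-case analysis of this form; the stated goal of $j+1$ conflicts in both cases should be replaced by the correct parity-dependent target.
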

\begin{proof}
    First, we  construct a cover $\Cov{H}=(L,\HH)$ of $G_m$, such that there is no $(0,j)$-coloring of $\HH$.
 For all $h=1,\ldots,j$, we let the matchings  $L(u_i)L(x_i)$ and $L(u_i)L(y_i)$ be odd, and the matching $L(x_i)L(y_i)$ be even. 
  The matchings  $L(v_g)L(v_{g+1})$  for $0\leq g\leq m-1$ are odd, all remaining matchings in $\HH$ are even.
  
Suppose
 $\HH$ admits a $(0,j)$-coloring $\phi$. If for some $h\in [j]$, $\phi(u_h)=p({u_h})$, then since $L(u_i)L(x_i)$ and $L(u_i)L(y_i)$ are odd,
 we also have $\phi(x_h)=p(x_h)$ and $\phi(y_h)=p(y_h)$. But $L(x_i)L(y_i)$ is even, a contradiction. Thus, $\phi(u_h)=r(u_h)$ for all $h\in [j]$, and 
  if $\phi(v_0)=r(v_0)$, then (since $L(v_0)L(v_{m})$ is even) $\phi(v_m)=p(v_m)$. Since $L(v_{m-1})L(v_{m})$ is odd, this yields 
 $\phi(v_{m-1})=p(v_{m-1})$. Similarly, we get $\phi(v_{m-2})=p(v_{m-2})$, and so on. Finally, $\phi(v_{1})=p(v_{1})$, which means that $\phi(v_0)$ has
 $j+1$ neighbors in $\HH_\phi$, a contradiction. Hence we may assume $\phi(v_0)=p(v_0)$. Then symmetrically, $\phi(v_m)=r(v_m)$ and consecutively
 for $g=m-1,m-2,\ldots,1$ we obtain $\phi(v_{g})=r(v_{g})$. This means that  $\phi(v_0)$ has
  neighbor  $\phi(v_1)$ in $\HH_\phi$, a contradiction. So, $G_m$ is not $(0,j)$-colorable.
 
 Now, let $G'$ be any proper subgraph of $G$. Since every block of $G$ is a cycle or an edge, $G'$ has fewer than $j+1$ cycles or is disconnected.
 In both cases, $|E(G')|-|V(G')|\leq j-1$. So by Proposition~\ref{prop:0k}, no proper subgraph of $G$ is $(0,j)$-critical. Hence each proper subgraph of $G$
 is $(0,j)$-colorable. 
\end{proof}

\subsection{Examples  of $(i,j)$-critical graphs
for  $i\geq1$ and $j\geq2i+1$.}

Let $G_m$ be obtained from the path $v_0v_1\ldots v_mv$ by adding $j$ flags with base $v$, $i+1$ flags with base $v_0$ and for $h=1,\ldots,m$, $i$ flags with base $v_h$,  see Fig.~\ref{fig:ijlarge}. By construction, for every  $m\geq 0$, we have $|V(G_m)|=(i+1)(m+1)+2+j$ and
$|E(G_m)|=(2i+1)(m+1)+2+2j= \frac{(2i+1)|V(G_m)|+j-2i}{i+1}$.

\begin{prop}
    Suppose $i,j\geq1$ and $j\geq 2i+1$ be integers. Then $G_m$ is $(i,j)$-critical for every $m$.
\end{prop}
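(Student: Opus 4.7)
The plan is to handle two tasks separately: (a) exhibit a cover $\Cov{H}=(L,\HH)$ of $G_m$ for which $\HH$ admits no $(i,j)$-coloring, and (b) show, via Proposition~\ref{prop:ijlarge}, that every proper subgraph of $G_m$ is $(i,j)$-colorable.

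For (a), I will build the cover so that in each flag the two parallel matchings have opposite parities (this is in any event forced by Lemma~\ref{quasi1}, and it makes each flag contribute exactly one edge of $\HH_\phi$ at its base and at the flag vertex regardless of $\phi$). On the path edges $v_{h-1}v_h$ (write $v_{m+1}:=v$), I will declare $M_h$ odd for $1\le h\le m$ and $M_{m+1}$ even. Since $v$ has $j>i$ flags and $v_0$ has $i+1>i$ flags, any $(i,j)$-coloring $\phi$ must satisfy $\phi(v)=r(v)$ and $\phi(v_0)=r(v_0)$, with $r(v)$ already at capacity $j$. Hence $\phi(v_m)\phi(v)\notin E(\HH)$, and $M_{m+1}$ even forces $\phi(v_m)=p(v_m)$. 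But $p(v_m)$ is now at capacity $i$, so $\phi(v_{m-1})\phi(v_m)\notin E(\HH)$; since $M_m$ is odd, this forces $\phi(v_{m-1})=p(v_{m-1})$. Iterating yields $\phi(v_h)=p(v_h)$ for every $1\le h\le m$, after which $M_1$ odd together with $p(v_1)$ at capacity force $\phi(v_0)=p(v_0)$, contradicting $\phi(v_0)=r(v_0)$. (The alternative $\phi(v_m)=r(v_m)$ creates the edge $r(v_m)r(v)$ via even $M_{m+1}$, pushing $r(v)$ over capacity.)

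For (b), I will take the trivial toughness ${\bf t}\equiv{\bf 0}$ and apply Proposition~\ref{prop:ijlarge}. Counting directly, $\rho_{G_m,{\bf 0}}(V(G_m))=(2i+1)|V(G_m)|-(i+1)|E(G_m)|=2i-j$. The key subclaim is that $\rho_{G_m,{\bf 0}}(S)\ge 2i-j+1$ for every $S\subsetneq V(G_m)$. To prove it, observe that any $S$ minimizing $\rho$ includes every flag vertex whose base lies in $S$ (each such inclusion changes $\rho$ by $(2i+1)-2(i+1)=-1$) and excludes every orphan flag vertex (each such inclusion changes $\rho$ by $+(2i+1)$). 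Setting $P:=S\cap\{v_0,\dots,v_m,v\}$ and letting $r(P)$ denote the number of maximal consecutive runs of path-vertices in $P$, a short calculation reduces the potential to
\[
\rho_{G_m,{\bf 0}}(S)\;=\;(i+1)\,r(P)\;-\;\mathbf{1}[v_0\in P]\;+\;(i-j)\,\mathbf{1}[v\in P].
\]
Enumerating the four $r(P)=1$ subcases gives values $i+1$, $i$, $2i-j+1$, and $2i-j$---the last only when $P=\{v_0,\dots,v_m,v\}$, i.e., $S=V(G_m)$; for $r(P)\ge 2$ the formula is at least $3i+1-j>2i-j$; and $P=\varnothing$ gives $0>2i-j$.

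Putting things together: if some proper subgraph $H\subsetneq G_m$ failed to be $(i,j)$-colorable, it would contain an $(i,j)$-critical subgraph $H'$, and Proposition~\ref{prop:ijlarge} would yield $S\subseteq V(H')$ with $\rho_{H',{\bf 0}}(S)\le 2i-j$; since deleting edges from $G_m$ only raises $\rho$, also $\rho_{G_m,{\bf 0}}(S)\le 2i-j$, so the subclaim forces $S=V(G_m)$ and $V(H')=V(G_m)$. But then $E(H')\subsetneq E(G_m)$ implies $\rho_{H',{\bf 0}}(V(G_m))\ge 2i-j+(i+1)>2i-j$, a contradiction. The main obstacle is the potential minimization in (b); the rest is propagation along the path and routine bookkeeping.
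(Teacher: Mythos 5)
Your proof is correct. Part (a), the construction of a bad cover, is essentially the paper's construction with the propagation run backwards from $v$ rather than forwards from $v_0$; this is cosmetic. Part (b), however, takes a genuinely different route. The paper proves that every proper subgraph is colorable by a direct coloring attempt: set $\psi(w)=r(w)$ on the path vertices of $G'$, argue that the only possible over-capacity vertex is $\psi(v)$, then use Lemma~\ref{quasi1} and Lemma~\ref{quasi} to force $V(G')\supseteq\{u_1,\dots,u_j,v,v_m,\dots,v_0\}$, and finally compare $|E(G')|$ with the bound of Proposition~\ref{prop:ijlarge}. You instead compute the $(i,j,{\bf 0})$-potential directly: you reduce any $S$ to a canonical one determined by $P=S\cap\{v_0,\dots,v_m,v\}$ (add flags of bases in $S$, discard orphan flags), obtain the closed-form $\rho(S)=(i+1)\,r(P)-\mathbf{1}[v_0\in P]+(i-j)\,\mathbf{1}[v\in P]$, and check that this equals $2i-j$ only at $S=V(G_m)$ and is $\ge 2i-j+1$ on every proper subset; combining with Proposition~\ref{prop:ijlarge} on a hypothetical critical proper subgraph $H'$ then gives a contradiction. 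I verified the formula (each regular $v_h$ together with its $i$ flags contributes $i+1$; $v_0$'s extra flag contributes an extra $-1$; $v$'s $j$ flags give $2i-j+1$, i.e., an extra $i-j$; each path edge in a run subtracts $i+1$), and the dichotomy you derive.

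Two small remarks. First, the paper's closing step (``contradicting Proposition~\ref{prop:ijlarge}'' after an edge count, and the earlier ``$q\ge1$ by Proposition~\ref{prop:ijlarge}'') implicitly uses that the potential of a critical graph is attained at the whole vertex set; your argument sidesteps that by working only with the set $S$ guaranteed by the proposition and with your own subclaim, which is cleaner. Second, in your subclaim the reduction ``add flags of bases in $S$'' may turn a proper $S$ into $V(G_m)$; to be airtight you should note that in that case $S$ already contains all path vertices but misses $k\ge1$ flags, so $\rho(S)=\rho(V(G_m))+k\ge 2i-j+1$ directly, and otherwise the reduced set has $P\subsetneq\{v_0,\dots,v_m,v\}$ and your case analysis applies. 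This is a minor bookkeeping clarification and does not affect correctness.
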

\begin{proof}
We construct a cover  $\Cov{H}=(L,\HH)$  of $G_m$  as follows. To each digon in $G_m$ correspond  one even and one odd matching in $\HH$. 
To each edge in the path  $v_0v_1\ldots v_m$ corresponds  an odd matching in $\HH$, and the matching corresponding to $vv_m$ is even.

Suppose $\HH$ admits an $(i,j)$-coloring $\phi$. Since $\phi(v_0)$ is adjacent to the $i+1$  vertices in $\{\phi(u_{0,1}),\ldots,\phi(u_{0,i+1})\}$ (see Fig.~\ref{fig:ijlarge}),
$\phi(v_0)=r(v_0)$. Since the matching corresponding to $v_0v_1$ is odd and $\phi(v_1)$ is adjacent to the $i$  vertices in 
$\{\phi(u_{1,1}),\ldots,\phi(u_{1,i})\}$, we also have $\phi(v_1)=r(v_1)$. Similarly, for each $2\leq h\leq m$ we conclude that $\phi(v_h)=r(v_h)$. 
Finally, $\phi(v)$ is adjacent to the $j$  vertices in $\{\phi(u_{1}),\ldots,\phi(u_{j})\}$. Hence $\phi(v)=r(v)$,
 and $\phi(v_m)$ cannot be adjacent to $\phi(v)$. But $\phi(v_m)=r(v_m)$ and
  the matching corresponding to $vv_m$ is even, a contradiction.

Thus if $G_m$ is not $(i,j)$-critical, then it contains a proper  $(i,j)$-critical subgraph $G'$. Suppose the cover $\Cov{H}'=(L',\HH')$  of $G'$
has no $(i,j)$-coloring. Let $q=|V(G_m)|-|V(G')|$.
By Proposition~\ref{prop:ijlarge}, $q\geq 1$. Also, $\delta(G')\geq 2$.
 Try to color $G'$ as follows: for each $w\in V(G')\cap \{v_0,\ldots,v_m,v\}$,
let $\psi(w)=r(w)$, and then for every remaining vertex $u$ (which is a flag vertex), choose $\psi(u)\in L'(u)$ with at most one edge connecting
$\psi(u)$ with $\psi(w)$ where $w$ is the neighbor of $u$ in $G'$. Since $\Cov{H}'$
has no $(i,j)$-coloring, some vertex $H'_\psi$  has more than $j$ neighbors. By the definition of $G$,
the only such vertex is
$\psi(v)$. Thus $v\in V(G')$ and also 
\begin{equation}\label{j29}
     \{u_1,\ldots,u_j,v_m\}\subset V(G')
     \end{equation} 
If not every $v_h$ belongs to $V(G')$, then let $g$ be the largest index such that $v_g\notin V(G')$. By~(\ref{j29}), $g\leq m-1$. Then
$v_{g+1}$  has at most one incident non-flag edge and
 is the base of at most $i$ flags, contradicting Lemma~\ref{quasi}. Thus $\{u_1,\ldots,u_j,v,v_m,v_{m-1},\ldots,v_0\}\subseteq V(G')$.
Hence, $G'$ is obtained from $G$ by deleting $q$ flag vertices and maybe some edges. But then, since $q\geq 1$,
$$|E(G')|\leq |E(G_m)|-2q= \frac{(2i+1)(|V(G')|+q)+j-2i}{i+1}-2q< \frac{(2i+1)|V(G')|+j-2i}{i+1},
$$
 contradicting Proposition~\ref{prop:ijlarge}.
\end{proof}

\subsection{Examples of $(i,j)$-critical graphs
for    $i+2\leq j\leq 2i$.}
Let $m\geq1$. Let $G_m$ be obtained from the path $P=v_0v_1\ldots v_{2m}$ by adding $j-1$ flags at each of $v_2,v_4,\ldots,v_{2m-2}$ and
adding $j$ flags at $v_0$ and $v_{2m}$.
Then $|V(G_m)|=(j+1)m+2+j$ and
 $|E(G_m)|= 2jm+2j+2=    \frac{2j|V(G_m)|+2}{j+1}$.


\begin{prop}
    Suppose  $i+2\leq j\leq 2i$. Then $G_m$ is $(i,j)$-critical for every $m\geq 1$.
\end{prop}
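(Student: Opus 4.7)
The plan is to mirror the two-step schema used in the analogous propositions of Sections~\ref{sec:0k}, \ref{sec:ijlarge}, and \ref{sec:iiplus1}: first exhibit a cover $\Cov{H}=(L,\HH)$ of $G_m$ that admits no $(i,j)$-coloring, then rule out any proper $(i,j)$-critical subgraph by contradicting Proposition~\ref{prop:ijsmall}.

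For the cover, I would give each flag digon one even and one odd matching (as demanded by Lemma~\ref{quasi1} in any critical setup) and alternate the parity on the path, making $M(v_hv_{h+1})$ even when $h$ is even and odd when $h$ is odd. In any candidate coloring $\phi$, the parity structure of a flag forces exactly one edge to appear between $\phi(v_{2h})$ and $\phi(u)$ in $\HH_\phi$ for every flag vertex $u$ at base $v_{2h}$, regardless of how either end is colored. Because each $v_{2h}$ has at least $j-1\geq i+1$ flags, the poor color at $v_{2h}$ is already overloaded, so $\phi(v_{2h})=r(v_{2h})$ for every $h$. Then $r(v_0)$ is saturated with its $j$ flag-neighbors, forcing $\phi(v_1)$ not to be adjacent to $r(v_0)$; since $M(v_0v_1)$ is even, $\phi(v_1)=p(v_1)$. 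The alternating parities then propagate: if $\phi(v_{2h-1})=p(v_{2h-1})$, the odd matching $M(v_{2h-1}v_{2h})$ makes $p(v_{2h-1})$ adjacent to $r(v_{2h})$, consuming the single path slot of $r(v_{2h})$, and the even matching $M(v_{2h}v_{2h+1})$ then forces $\phi(v_{2h+1})=p(v_{2h+1})$. Inductively $\phi(v_{2h+1})=p(v_{2h+1})$ for every $h$, and at the final step $p(v_{2m-1})$ becomes a $(j+1)$-st neighbor of $r(v_{2m})$ via the odd matching $M(v_{2m-1}v_{2m})$ --- a contradiction.

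For the second step, suppose a proper $(i,j)$-critical subgraph $G'\subsetneq G_m$ exists. By Proposition~\ref{prop:ijsmall} there is $S\subseteq V(G')$ with $\rho_{G'}(S)\leq -2$, and since $G'\subseteq G_m$ implies $\rho_{G'}(S)\geq \rho_{G_m}(S)$, it suffices to prove the \emph{potential claim}: $\rho_{G_m}(V(G_m))=-2$ and $\rho_{G_m}(S)\geq 0$ for every proper $S\subsetneq V(G_m)$. A direct vertex/edge count gives the first half, and the subcase $V(G')=V(G_m)$ with strictly fewer edges is handled by the gain of at least $j+1\geq 5$ in the potential relative to $\rho(G_m)$.

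The main obstacle is the potential bound on proper subsets. My strategy is a local-optimality analysis of $\rho_{G_m}$ under single-vertex additions and removals. The optimality conditions simplify to: every flag vertex is in $S$ iff its base is, and every path 2-vertex $v_{2h+1}$ is in $S$ iff both of $v_{2h},v_{2h+2}$ are, so each local minimum is parametrized by a binary string $s\in\{0,1\}^{m+1}$ encoding which base vertices $v_{2h}$ are included. A block-by-block evaluation gives $\rho_{G_m}(S)=2(|I|-R)$, where $|I|$ is the number of $1$'s of $s$ in positions $1,\dots,m-1$ and $R$ is the number of consecutive $11$-pairs; a run-by-run decomposition of $s$ shows $|I|-R\geq 0$ unless $s$ is the all-ones string (which yields $S=V(G_m)$, $\rho=-2$). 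Hence every proper local minimum has $\rho\geq 0$. For a proper $S$ that is not a local minimum, each improving single-vertex move decreases $\rho$ by at least $2$ (the worst cases being $2j-2(j+1)=-2$ for additions and $-2j+(j+1)=-j+1\leq -3$ for removals, using $j\geq i+2\geq 4$), so iterating to a local minimum --- whether proper or $V(G_m)$ --- yields $\rho_{G_m}(S)\geq 0$, completing the proof.
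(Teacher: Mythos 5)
Your cover construction and the argument that it admits no $(i,j)$-coloring coincide with the paper's (same alternating parities, same forcing of $\phi(v_{2h})=r(v_{2h})$ via the digon $K_{2,2}$'s, same propagation along the path), so I will focus on the minimality step, where you genuinely diverge.

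The paper's minimality argument is structural: it assumes a proper $(i,j)$-critical $G'\subsetneq G_m$ with a critical cover, uses Lemma~\ref{quasi1} to show (via a color-extension trick) that $v_0,v_{2m}\in V(G')$, deduces $\{v_0,\dots,v_{2m}\}\subseteq V(G')$ from connectivity, then counts flag vertices using $\delta(G')\geq 2$ and the edge bound of Part~3 of Theorem~\ref{main} to force $G'=G_m$. You instead prove a purely arithmetic \emph{potential claim} --- $\rho_{G_m}(V(G_m))=-2$ and $\rho_{G_m}(S)\geq 0$ for every proper $S\subsetneq V(G_m)$ --- and then invoke $\rho_{G'}(S)\geq\rho_{G_m}(S)$ together with Proposition~\ref{prop:ijsmall} to kill any proper critical subgraph, handling $V(G')=V(G_m)$ separately via the $+(j+1)$ jump from a deleted edge. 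Your local-optimality analysis is correct: the single-vertex move increments are $2j-(j+1)d$ (addition) or $(j+1)d-2j$ (removal), so improving moves change $\rho$ by $-2$ (addition, $d=2$) or $1-j\leq -3$ (removal, $d=1$; here $i\geq 2$ so $j\geq 4$), local minima are indeed parametrized by binary strings $s\in\{0,1\}^{m+1}$, the block evaluation $\rho=2(|I|-R)$ checks out, and the run decomposition gives $|I|-R\geq 0$ unless $s$ is all ones. The paper's route is shorter but leans on the colorability lemmas of Section~\ref{sec:construction} and on connectivity of critical graphs; yours is longer but self-contained given Proposition~\ref{prop:ijsmall} and is, in effect, an explicit verification that $G_m$ lies exactly at the sharpness threshold of the potential bound, which one could argue is the more informative statement. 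Both are valid.
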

\begin{proof}
Since $i+2\leq 2i$, we have $i\geq 2$ and $j\geq 4$.
    We construct $\HH$ as follows. To each  digon in $G_m$ correspond two disjoint matchings in $\HH$. For each $1\leq h\leq m$, the
     matching between $L(v_{2h-2})$ and $L(v_{2h-1})$ is even, and the     matching between $L(v_{2h-1})$ and $L(v_{2h})$ is      odd. 
    
    Suppose $\HH$ admits an $(i,j)$-coloring $\phi$. Then for $h=0,\ldots,m$, $\phi(v_{2h})$ has at least $j-1\geq i+1$ neighbors in $H_\phi$,  hence
    $\phi(v_{2h})=r(v_{2h})$. Furthermore, $\phi(v_0)$ has $j$ neighbors in the flags based at $v_0$, so $\phi(v_1)$ is not adjacent to $\phi(v_0)$.
  Since the matching corresponding to $v_0v_1$ is even, this means $\phi(v_1)=p(v_1)$. 
 Since the matching corresponding to $v_1v_2$ is odd, it follows that $\phi(v_2)=r(v_2)$  is adjacent to $\phi(v_1)$ and to  $j-1$ neighbors
  in the flags based at $v_2$. So, similarly to the situation with $v_0$, $\phi(v_3)$ is not adjacent to $\phi(v_2)$ and is adjacent to $\phi(v_4)$. Repeating the argument, we get
  that $\phi(v_5)$ is  adjacent to $\phi(v_6)$, and so on. Finally, we get that $\phi(v_{2m-1})$ is  adjacent to $\phi(v_{2m})$, and hence $\phi(v_{2m})$ has
  $j+1$ neighbors in $H_\phi$, a contradiction.

  Thus, if $G_m$ is not $(i,j)$-critical, then it contains a  
     proper $(i,j)$-critical subgraph $G'$. Suppose the cover $\Cov{H}'=(L',\HH')$  of $G'$
has no $(i,j)$-coloring. Choose the smallest $k$ such that $v_k\in V(G')$ and suppose $k>0$. Since $G'$ is $(i,j)$-critical,
$\HH'-L(v_k)$ has an $(i,j)$-coloring $\phi$. Since $k>0$, $v_k$ has at most $j$ neighbors in $G'$ and hence 
       by Lemma~\ref{quasi1}, we can extend $\phi$ to $v_k$ as follows: If $k$ is even, then we let $\phi(v_k)=r(v_k)$, and if $k$ is
       odd, then we choose $\phi(v_k)\in L(v_k)$ not adjacent to $\phi(v_{k+1})$ (if $v_{k+1}$ is not in $V(G')$, then no restrictions).
 In both cases, we get an $(i,j)$-coloring of $\HH'$, a contradiction.
 
 Thus, $v_0\in V(G')$. Symmetrically, $v_{2m}\in V(G')$. Since every $(i,j)$-critical multigraph is connected, this means $\{v_0,\ldots,v_{2m}\}\subseteq V(G')$.
If $G'$ has exactly $s$ flag vertices, then $|V(G')|=2m+1+s$ and $|E(G')|\leq 2m+2s$. So  by Proposition~\ref{prop:ijsmall}, $s\geq 2+(m+1)(j-1)$, which means
$V(G')=V(G)$. Since the minimum degree of each $(i,j)$-critical multigraph is at least $2$, this yields $G'=G$, a contradiction.
   \end{proof}

\subsection{Examples of $(i,i+1)$-critical graphs
 for $i\geq1$.}
Let $v_0,\dots,v_{m+1}$ be the vertices of $P_{m+2}$, where $v_0$ and $v_{m+1}$ are end vertices. Define $G_m$ by adding $i+1$ weak flags of weight $i+1$ to $v_0$, adding $i$ weak flags of weight $i+1$ to $v_1,\dots,v_m$, and by adding $i+1$ flags to $v_{m+1}$, see Fig.~\ref{fig:ii+1}. 
Then for every $m$, $|V(G_m)|=(m+1)i^2+(3m+4)i+i+m+6$, $E(G_m)=2(m+1)i^2+4(m+2)i+m+7$, and thus
 $$|E(G_m)|= \frac{(2i^2+4i+1)|V(G_m)|+1}{i^2+3i+1}.$$

\begin{figure}[h]
    \centering
    \includegraphics[width=3.2in]{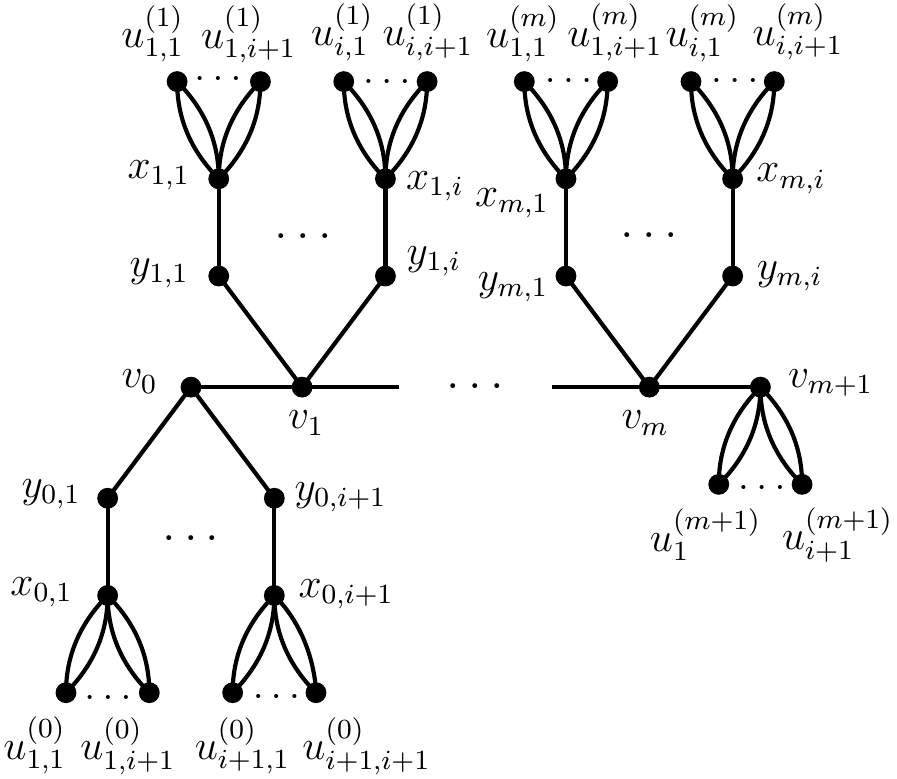}
\caption{Critical graphs for $(i,i+1)$-colorings.}\label{fig:ii+1}
\end{figure}

\begin{prop}
    Let $i\geq2$ be an integer. Then $G_m$ is $(i,i+1)$-critical for every $m$.
\end{prop}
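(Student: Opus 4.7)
The approach has two parts: constructing a cover witnessing that $G_m$ is not $(i,i+1)$-colorable, and showing every proper subgraph of $G_m$ is $(i,i+1)$-colorable via Theorem~\ref{prop:ii+1}.

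For non-colorability, I would define a cover $\Cov{H}=(L,\HH)$ of $G_m$ as follows. Every digon of $G_m$ (a flag at $v_{m+1}$ or a flag digon inside any weak flag) receives one even and one odd matching. Inside every weak flag of weight $i+1$ with center $x$, auxiliary vertex $y$, and base $v$, both single edges $xy$ and $yv$ receive even matchings. The path matching $L(v_h)L(v_{h+1})$ is odd for $0\le h\le m-1$, while $L(v_m)L(v_{m+1})$ is even. The digon choice makes every flag vertex contribute $+1$ to its base in $\HH_\phi$ regardless of its color, so the $i+1$ flags at $x$ force $\phi(x)=r(x)$ with rich capacity saturated; the even $L(x)L(y)$ then forces $\phi(y)=p(y)$, and the even $L(y)L(v)$ makes the weak flag ``fire at poor'', contributing $+1$ to $\phi(v)$'s degree exactly when $\phi(v)=p(v)$.

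Given this setup, in any alleged $(i,i+1)$-coloring $\phi$ the $i+1$ regular flags at $v_{m+1}$ force $\phi(v_{m+1})=r(v_{m+1})$ with rich capacity saturated; the even $L(v_m)L(v_{m+1})$ then forbids $\phi(v_m)=r(v_m)$, so $\phi(v_m)=p(v_m)$ and the $i$ weak flags at $v_m$ saturate $p(v_m)$; the odd $L(v_{m-1})L(v_m)$ contains the edge $r(v_{m-1})p(v_m)$, so $\phi(v_{m-1})=r(v_{m-1})$ would violate saturation at $p(v_m)$, forcing $\phi(v_{m-1})=p(v_{m-1})$ with $p(v_{m-1})$ saturated; iterating, $\phi(v_h)=p(v_h)$ and $p(v_h)$ is saturated for every $1\le h\le m$. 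The $i+1$ weak flags at $v_0$ force $\phi(v_0)=r(v_0)$ (else $p(v_0)$ receives $i+1>i$ neighbors), but the odd $L(v_0)L(v_1)$ then contributes the edge $r(v_0)p(v_1)\in E(\HH_\phi)$, violating the already-saturated $p(v_1)$, a contradiction.

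For minimality, I would show $\rho(G')>-1$ for every proper subgraph $G'$ of $G_m$, which implies $G'$ is not $(i,i+1)$-critical by Theorem~\ref{prop:ii+1}. A direct edge-count computation gives $\rho_{G_m}(V(G_m))=-1$; I would verify $\rho_{G_m}(S)\ge 0$ for every proper $S\subsetneq V(G_m)$ by local analysis, since every vertex of $G_m$ has degree at least $2$ and removing a single vertex changes $\rho$ by at least $2i+1$, giving $\rho_{G_m}(V(G_m)-v)\ge 2i$, while the densest nontrivial induced subgraphs (a flag cluster at a base vertex, or a single weak flag attached at $y$) are computed directly to have potential bounded below by positive values such as $i$. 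Since $\rho_{G'}(S)\ge \rho_{G_m}(S)$ for all $S\subseteq V(G')$, either $V(G')\subsetneq V(G_m)$ (giving $\rho(G')\ge 0$) or $V(G')=V(G_m)$ with $E(G')\subsetneq E(G_m)$ (giving $\rho_{G'}(V(G_m))>-1$); in either case $\rho(G')>-1$.

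The main obstacle is Part~1: identifying the correct parity choices --- odd on all path edges except the last, combined with ``fire at poor'' weak flags --- so that the forcing chain propagates tightly from $v_m$ back to $v_1$ and conflicts with $\phi(v_0)=r(v_0)$; many natural alternatives (all even path matchings, weak flags firing at rich, weak flags with mixed polarity) admit valid colorings. Part~2 is routine once the potential bounds are established.
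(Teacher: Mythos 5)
Your Part~1 is correct and uses exactly the paper's cover (one even and one odd matching per digon, even matchings on the weak-flag edges $xy$ and $yv$, odd path matchings except for the last edge $v_mv_{m+1}$). Your forcing chain runs in the opposite direction from the paper's: you push from $v_{m+1}$ backward, forcing $\phi(v_m)=\cdots=\phi(v_1)=p(\cdot)$ with poor capacity saturated, and derive the contradiction at $v_0$; the paper pushes from $v_0$ forward, forcing $\phi(v_0)=\cdots=\phi(v_m)=r(\cdot)$, and derives the contradiction at $v_{m+1}$. Both are valid readings of the same cover, so Part~1 is fine as an alternative phrasing.

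Part~2 is where the approaches genuinely diverge, and where your argument has a real gap. The paper argues structurally: it invokes Lemma~\ref{quasi1} and Lemma~\ref{quasi} to show that any proper $(i,i+1)$-critical subgraph $G'$ must contain the whole spine $v_0,\dots,v_{m+1}$ (and the flags at $v_{m+1}$) and hence can only be missing weak-flag vertices; it then explicitly exhibits an $(i,i+1)$-coloring of the given cover of such a $G'$, a contradiction. You instead propose a purely numerical route: show $\rho_{G_m}(S)\ge 0$ for every proper $S\subsetneq V(G_m)$ and invoke the contrapositive of Theorem~\ref{prop:ii+1}. This is a sound strategy in principle, but the central inequality $\rho_{G_m}(S)\ge 0$ for \emph{all} proper $S$ is not established by what you wrote. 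The observation ``removing a single vertex changes $\rho$ by at least $2i+1$'' only handles $|S|=|V(G_m)|-1$; it does not iterate, because $d_{G_m[S]}(v)$ changes as $S$ shrinks. The ``densest nontrivial induced subgraphs'' checks are a handful of special cases, and a naive per-edge discharging fails: a weak-flag center $x$ has degree $2i+3$, and after sending each of its incident edges $-\tfrac12(i^2+3i+1)$, its charge is strongly negative and not recovered just by the $i+1$ flag vertices at $x$ (for $i\ge 2$ the local balance around $x$ alone is still $-\tfrac12(i^2+i+1)$). Making the potential bound rigorous requires a global accounting (e.g., a tailored discharging across the whole weak-flag cluster and spine, or an inductive peeling of clusters), which is substantially more than ``routine.'' I would either supply that discharging in full, or fall back to the paper's route through Lemmas~\ref{quasi1} and~\ref{quasi}, which sidesteps the exhaustive potential verification.
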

\begin{proof}
    Let $\Cov{H}=(L,\HH)$ be a cover of $G_m$, we define $\HH$ as follows. Each digon in $G_m$ represents two disjoint matchings in $\HH$, and the edges in each weak flag but not in any digons represent even matchings. The matchings between $L(v_t)$ and $L(v_{t+1})$ are odd for every $t\leq m-1$, and the matching between $L(v_m)$ and $L(v_{m+1})$ is even. 
    
    Suppose $\HH$ admits an $(i,i+1)$-coloring $\phi$. Then $\phi(x_{\alpha,\beta})$, $\phi(v_{m+1})$ (See Fig.~\ref{fig:ii+1}) are rich for every possible $(\alpha,\beta)$, and thus $\phi(y_{\alpha,\beta})$ are poor for all $(\alpha,\beta)$. Hence $\phi(v_i)$ are rich for all $0\leq i\leq m$. But then $\phi(v_{m+1})$ has $i+2$ neighbors, a contradiction.

    If $G_m$ is not $(i,i+1)$-critical, then it contains a proper critical subgraph $G'$. Suppose the cover $\Cov{H}'=(L',\HH')$ of $G'$ has no $(i,i+1)$-coloring. By Proposition~\ref{prop:ii+1}, $|V(G_m)|-|V(G')|\geq1$, and $\delta(G')\geq2$. By Lemma~\ref{quasi}, $v_t\in V(G')$ for every $t=0,\dots,m+1$, and $u_s^{(m+1)}\in V(G')$ for $s=1,\dots,i+1$.
    We try to color $G'$ as follows: 
    
    Suppose one of the vertices of a weak flag of $v_t$ is not in $G'$, where $0\leq t\leq m$. We may assume $y_{t,1}\notin V(G')$ (By Lemma~\ref{quasi1} if $y_{t,1}\in V(G')$ then $x_{t,1}\in V(G')$, and by Lemma~\ref{quasi} if $x_{t,1}\in V(G')$ then all the flag vertices of $x_{t,1}$ are in $V(G')$). For every $w\in V(G')\cap(\bigcup\{x_{\alpha,\beta}\}\cup\{v_{m+1}\})$, let $\Phi(w)=r(w)$. Let $\Phi(y_{a,b})\in L'(y_{a,b})$ such that it is not adjacent to $\Phi(x_{a,b})$. For every $w\in\{v_0,\dots,v_{t-1}\}$, let $\Phi(w)=r(w)$. For every $t\leq s\leq m$, let $\Phi(v_s)\in L'(v_s)$ such that it is not adjacent to $\Phi(v_{s+1})$. Since $v_{t}$ has only at most $i+1$ neighbors in $G'$, $\Phi$ is an $(i,i+1)$-coloring of $\Cov{H}'$ of $G'$, a contradiction.
\end{proof}

\subsection{Examples of $(i,i)$-critical graphs
 for $i\geq1$.}

Let $G_m$ be obtained from the $2m$-cycle $C=v_0v_1\ldots v_{2m-1}v_0$ by adding $i$ flags (with flag vertices $u_{2h,1},\ldots,u_{2h,i}$) at vertex $v_{2h}$ 
for each $0\leq h\leq m-1$.
Then $|V(G_m)|=(i+2)m$ and 
  $|E(G_m)|=2m +2im=\frac{(2i+2)|V(G_m)|}{i+2}$ for every positive integer $m$.

\begin{prop}
    Let $i\geq1$ be an integer. Then $G_m$ is $(i,i)$-critical for every positive integer $m$.
\end{prop}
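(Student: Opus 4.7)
The plan is to verify two things separately: (i) that $G_m$ admits a cover with no $(i,i)$-coloring, and (ii) that every proper subgraph of $G_m$ is $(i,i)$-colorable with respect to any cover.

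For (i), I would build the cover $\Cov{H}=(L,\HH)$ by taking both (one even and one odd) matchings on every flag digon, so that $\HH[L(u_{2h,l}),L(v_{2h})]$ is a copy of $K_{2,2}$, and by choosing the matching $L(v_0)L(v_1)$ to be even and all other matchings along the cycle to be odd. Because each flag's $K_{2,2}$ forces exactly one edge in $\HH_\phi$ between $\phi(v_{2h})$ and $\phi(u_{2h,l})$ for every $\Cov{H}$-map $\phi$, the $i$ flags at $v_{2h}$ already contribute degree $i$ to $\phi(v_{2h})$, so an $(i,i)$-coloring would require both cycle edges at each $v_{2h}$ to contribute no edge to $\HH_\phi$. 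Writing $t(v_k)\in\{0,1\}$ for the type of $\phi(v_k)$, this means $t(v_k)=t(v_{k+1})$ along each odd cycle matching and $t(v_k)\neq t(v_{k+1})$ along each even one, so the number of sign changes of $t$ around the cycle must equal the number of even cycle matchings, which is $1$. This is impossible since the number of sign changes of a $\{0,1\}$-valued function around a closed cycle is always even.

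For (ii), I plan to argue indirectly via Proposition~\ref{prop:dd}. If some $G'\subsetneq G_m$ were not $(i,i)$-colorable, it would contain an $(i,i)$-critical subgraph $G^*$. A standard one-vertex extension argument (a vertex $v$ of degree at most $1$ can be colored after $G^*-v$ by selecting the non-adjacent representative in $L(v)$) shows $\delta(G^*)\geq 2$, while Proposition~\ref{prop:dd} gives $(i+2)|E(G^*)|\ge (2i+2)|V(G^*)|$. Let $a,b,c$ denote the numbers of $v_{2h}$'s, $v_{2h+1}$'s and flag vertices in $V(G^*)$ respectively. The condition $\delta(G^*)\geq 2$ forces every flag vertex in $V(G^*)$ to carry both of its digon edges (so its base $v_{2h}$ lies in $V(G^*)$) and every $v_{2h+1}\in V(G^*)$ to carry both of its cycle edges (so $v_{2h}$ and $v_{2h+2}$ lie in $V(G^*)$). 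Consequently $|V(G^*)|=a+b+c$ and $|E(G^*)|=2b+2c$, and the density condition simplifies to $b+c\ge (i+1)a$.

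The central step, which I expect to be the main obstacle, is the matching upper bound $b+c\le(i+1)a$. The inequality $c\le ia$ is immediate since each $v_{2h}$ hosts only $i$ flag vertices in $G_m$, while $b\le a$ requires a cyclic counting argument: each selected $v_{2h+1}$ maps injectively to the cyclically consecutive pair $(v_{2h},v_{2h+2})$ of selected even-indexed vertices, and a subset of $a$ elements placed on a cyclic ordering of $m$ positions has at most $a$ cyclically consecutive pairs, with equality if and only if $a=m$. Combining this with the density bound forces equality everywhere, so $a=b=m$ and $c=mi$, meaning $G^*=G_m$. This contradicts $G^*\subseteq G'\subsetneq G_m$ and completes the proof that $G_m$ is $(i,i)$-critical.
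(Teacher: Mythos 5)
Your proposal is correct, and part (ii) genuinely departs from the paper's argument. For part (i) the difference is cosmetic: you flip which cycle matching is even versus odd (the paper makes the matching $L(v_{2m-1})L(v_0)$ the sole odd one, you make $L(v_0)L(v_1)$ the sole even one), but the underlying argument is the same, namely that the flag $K_{2,2}$'s saturate each $\phi(v_{2h})$ to degree exactly $i$, forcing every cycle edge to contribute nothing to $\HH_\phi$, which is then ruled out by a parity count around the cycle --- your ``sign-change'' phrasing is an elegant way to say what the paper says by tracking parities around the cycle.

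For part (ii) the paper propagates structure locally: it invokes Lemma~\ref{quasi1} to get $\delta(G')\ge 2$, then repeatedly applies Lemma~\ref{quasi} to show that a missing $v_{2h}$ forces all cycle vertices to be missing, eventually concluding $V(G')=V(G_m)$ and then contradicting Proposition~\ref{prop:dd}. You instead go directly to a global count: parametrize $V(G^*)$ by $(a,b,c)$, use $\delta(G^*)\ge 2$ together with the fact that flag vertices and odd-indexed cycle vertices have degree exactly $2$ in $G_m$ to get $|E(G^*)|=2b+2c$, and then squeeze $b+c$ between the Proposition~\ref{prop:dd} lower bound $(i+1)a$ and the combinatorial upper bound $c\le ia$, $b\le a$ (the cyclic-interval count), with equality forcing $G^*=G_m$. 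This is self-contained and arguably cleaner. It is also notable that you avoid Lemma~\ref{quasi} entirely: as stated, Lemma~\ref{quasi} carries the hypothesis $i\le j-1$, which does not literally cover the case $j=i$ used here (its proof can be repaired for $j=i$ by letting each flag vertex pick the list element not adjacent to $r(v)$, but your route sidesteps the issue). One minor nit: your ``equality if and only if $a=m$'' should be read as ``at most $a-1$ when $a<m$, and exactly $m$ when $a=m$,'' since the arc-decomposition formula $a-(\text{number of arcs})$ only applies to proper subsets of the $m$-cycle; this is the inequality you actually need and it is true, but the one-line justification glosses over the boundary case.
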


\begin{proof}
    Let $\Cov{H}_m=(L,\HH_m)$ be a cover of $G_m$, such that 
  \begin{equation}\label{m0}  
\mbox{ \em    the matching between $L(v_{2m-1})$ and $L(v_0)$ is odd}
\end{equation} and
 \begin{equation}\label{m01}  
\mbox{ \em
     for each $0\leq h\leq 2m-2$, 
    the matching between $L(v_{h})$ and $L(v_{h+1})$ is even.}
 \end{equation}   
     Also, one of the matchings between $L(v_{2h})$ and $L(u_{2h,q})$ is odd and the other is even
  for  each $0\leq h\leq m-1$  and each $1\leq q\leq i$. Suppose $\HH_m$ has an $(i,i)$-coloring $\phi$.
  Then for each $0\leq h\leq m-1$, vertex $\phi(v_{2h})$ has $i$ neighbors in the set $\{\phi(u_{2h,1}),\ldots,\phi(u_{2h,i})\}$. Therefore,
  $\phi(v_{2h})\nsim\phi(v_{2h-1})$ and  $\phi(v_{2h})\nsim\phi(v_{2h+1})$. By~\eqref{m01}, this yields that the parity of $\phi(v_{2h+1})$
  differs from the parities of $\phi(v_{2h})$ and $\phi(v_{2h+2})$ for each $0\leq h\leq m-1$. It follows that the parities of 
  $\phi(v_{0}),\phi(v_{2}),\ldots,\phi(v_{2(m-1)})$ are the same, and the parity of $\phi(v_{2m-1})$ is different from them. But this contradicts~\eqref{m0}. 
  
  
   Thus, if $G_m$ is not $(i,i)$-critical, then it contains a  
     proper $(i,i)$-critical subgraph $G'$. Suppose the cover $\Cov{H}'=(L',\HH')$  of $G'$
has no $(i,i)$-coloring. If $G'$ does not contain a vertex $v_{2h}$ for some $0\leq h\leq m-1$, then by Lemma~\ref{quasi1}, also $v_{2h-1}$ and
$v_{2h+1}$ are not in $G'$. But then by Lemma~\ref{quasi}, also $v_{2h-2}$ and
$v_{2h+2}$ are not in $G'$, and so on. Thus, all vertices of $C$ are in $G'$. Then again by Lemma~\ref{quasi}, all vertices of $G$ are in $G'$.
It follows that $G'$ is obtained from $G$ by deleting some edges. This contradicts Proposition~\ref{prop:dd}.
   \end{proof}

\end{document}